\documentclass[12pt]{article}

\usepackage[english, main=english]{babel}
\usepackage{todonotes}
\usepackage{epsfig}
\usepackage{dsfont}
\usepackage{comment}
\usepackage{subfig}
\usepackage{amsmath,amssymb,amsthm,thmtools}
\usepackage{tikz}
\usepackage{xparse}
\usepackage{ifthen}
\usepackage{textcomp}
\usepackage{mathtools}
\usepackage{mathrsfs}
\usepackage{enumitem}
\usepackage{comment}
\usepackage[T1]{fontenc}
\usepackage[ansinew]{inputenc}
\usepackage{graphicx}
\usepackage{pdfpages}
\usepackage{float}
\RequirePackage{times} 
\usepackage[colorlinks,linkcolor=blue,urlcolor=black]{hyperref}
\usepackage[dvipsnames]{xcolor}
\allowdisplaybreaks

\newcommand{\N}{\mathbb{N}} 
\newcommand{\R}{\mathbb{R}} 
\newcommand{\C}{\mathbb{C}} 
\renewcommand{\P}{\mathbb{P}} 
\newcommand{\E}{\mathbb{E}} 
\newcommand{\1}{\mathds{1}}

\renewcommand{\d}{\mathrm{d}}

\theoremstyle{definition}
\newtheorem{definition}{Definition}[section]


\theoremstyle{plain}

\newtheorem{Lemma}[definition]{Lemma}
\newtheorem{Corollary}[definition]{Corollary}

\newtheorem{Theorem}[definition]{Theorem}

\theoremstyle{remark}
\newtheorem{Remark}{Remark}

\title{MA(1) processes with uniform innovations conditioned to stay positive in the non-expanding regime}
\author{Frank Aurzada and  Virginia Worf}

\begin{document}

\maketitle 
\allowdisplaybreaks

\begin{abstract}
We study an MA(1)-process with uniform innovations conditioned to stay positive. Representing the model as a Markov chain, we prove the existence of the limiting finite-dimensional distributions under this conditioning and identify the limiting process explicitly as a Doob $h$-transform. In the non-expanding case, i.e.\ when the coupling parameter $\theta$ satisfies $\theta\in[-1,1)$, we compute the relevant generating functions, extract sharp persistence asymptotics, and give explicit formulas for the eigenfunction $h$ and the persistence exponent. The resulting transition kernel of the limiting process is therefore fully explicit and displays a phase-dependent structure in the parameters. This provides a rare solvable example of a Markov chain on a continuous state space conditioned on persistence.
\end{abstract}

{\bf Keywords:} conditioned Markov chain; Doob's $h$-transform; generating function; moving average process; persistence probability; uniform innovations

\section{Introduction and main results}
Stochastic processes conditioned to satisfy long-time constraints form a classical theme in probability, going back at least to Doob's seminal work on conditioning Brownian motion to be positive \cite{doob}. Conditional distributions which appear from constraining stochastic processes form interesting new models and give insights into the conditioning mechanism. In the present paper, we study this conditioning problem for a class of moving-average processes and give an explicit Doob-type description of the limiting process under the constraint of staying positive.

The present case is a rare instance where the quantities involved in the Doob-transform are all explicit but non-trivial. Indeed, although persistence probabilities for moving-average processes are already delicate (cf.\ the phase diagram in Figure~\ref{fig:map} below and \cite{AMZ,AurzadaRaschel}), we can go beyond exponential rates and identify the full limiting process under the conditioning on positivity. In particular, we obtain concrete formulas for the eigenfunctions entering the Doob $h$-transform. This gives a fairly complete picture of how the conditioned process depends on the parameters, including different phase regions. Beyond being new, our results can be seen as an expository example, where the non-trivial limiting Doob kernel is not only known to exist but can actually be written down explicitly.

More concretely, we consider the MA(1)-process  $Z_i\coloneqq X_{i+1}-\theta X_i$, $i\in\N$, $Z_0=X_1$, where $(X_i)_{i\in\N}$ are i.i.d.\ random variables, uniformly distributed on $[-a,1]$ for some $a>-1$ and $\theta\in[-1,1)$ is a coupling parameter. Since the choice of the positive right end-point of the uniform distribution at $1$ is without loss of generality, our results cover all possible uniform distributions with positive right end-point. The process $Z$ can be represented as a function of the Markov chain defined by $M_0:=(X_1,0)$ and  $M_i\coloneqq(X_{i+1},X_i)$ for $i\geq 1$. With $$S\coloneqq \{(x_2,x_1)\in [-a,1]^2| x_2>\theta x_1\}$$ and $\Omega_n\coloneqq \cap_{i=1}^n\{M_i\in S\}$, the persistence probability of the process $Z$ is given by
    \begin{align*}
    p_n^a(\theta)\coloneqq \P(\min_{1\leq i\leq n} Z_i\geq 0)=\P(X_2\geq\theta X_1,X_3\geq\theta X_2,\dots,X_{n+1}\geq\theta X_n)=\P(\Omega_n),~~~~n\ge 1,
\end{align*} with $p_0^a(\theta)\coloneqq 1$.
Let $\P_y(~\cdot~)\coloneqq\P(~\cdot~|X_1=y)$. Then the limiting finite dimensional distributions of $(Z_i)_{i\in\N}$ conditioned to stay positive and started at $Z_0=y$ are given by
\begin{align}\label{def_lim_process}
    \lim_{n\to\infty}\P_y((M_k)_{1\leq k\leq m}\in\cdot~|\Omega_n)=\P_y((M'_k)_{1\leq k\leq m}\in\cdot~),
\end{align}
provided that the process $M'$ on the right-hand side exists. 
Our result is the existence of a Markov chain $M'$ and an explicit formula for its transition kernel. Before stating the two main theorems formally, we introduce some notation.
\\The transition kernel of the Markov chain $M$ for $(x_1,x_1'),(x_2,x_2')\in[-a,1]^2$ is given by
\begin{align}\label{trans_prob_MC-2dim}
     \P_{(x_1,x_1')}(M_1\in (\d x_2,\d x_2'))=\delta_{x_1}(\d x_2') \, \P(X_1\in \d x_2)=\delta_{x_1}(\d x_2') \, \frac{1}{a+1}\,\d x_2.
\end{align} 
Note we only have to consider two points satisfying $x_2'=x_1$ and $x_1'$ is irrelevant. Therefore, instead of dealing with the transition kernel for two $\R^2$-valued points, it suffices to consider $x_1, x_2\in [-a,1]$ and define the reduced transition kernel of $M$ by
   \begin{align*}
       p(x_1,\d x_2)\coloneqq \frac{1}{a+1}\,\d x_2.
   \end{align*}

We are now ready to state the first main theorem. It shows that the limiting process in \eqref{def_lim_process} exists and has the expected Doob $h$-transform structure: its transition kernel is determined by a positive eigenfunction $h$ and the corresponding eigenvalue $\lambda$. The point of the second main result, Theorem~\ref{maintheorem}, is that this representation can be made fully explicit, with closed-form formulas for $h$ and $\lambda$ in all considered parameter regions.

\begin{Theorem}\label{maintheorem_2}
    For $\theta\in[-1,1)$ and $a>-1$, there is some $t\in\{a,-\frac{1}{\theta},0\}$ such that for $y\in[-t,1]$, the limiting process $M'$ in \eqref{def_lim_process} exists and is a time-homogeneous Markov chain on $[-t,1]$ with reduced transition kernel 
\begin{align*}
     p'(x_1,\d x_2)&\coloneqq \mathds{1}_S(x_2,x_1)\frac{h(x_2)}{h(x_1)}\frac{1}{\lambda}\frac{1}{a+1}\,\d x_2,~~~~~~~~x_1,x_2\in[-t,1],
\end{align*}
 where $h:\R\to\R$ is the unique (up to multiplication with a constant) and positive (on the effective state space) eigenfunction satisfying the eigenvalue equation
\begin{align}\label{eq_eigenfunction}
    \lambda h(x)=\int_{\{y>\theta x\}\cap[-t,1]} h(y) \frac{1}{a+1}\,\d y,\qquad x\in\R,
\end{align}
corresponding to the largest eigenvalue $\lambda>0$.
\end{Theorem}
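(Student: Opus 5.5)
The plan is to reduce everything to the large-$n$ behaviour of the one-point persistence probabilities $p_n(x)\coloneqq \P_x(\Omega_n)$, where $x$ is the current value of the chain, and then to read off the conditioned kernel from the Markov property. For a measurable set $A$ of paths of length $m$ and $n\ge m$, the Markov property of $M$ gives
\begin{align*}
\P_y\big((M_k)_{1\le k\le m}\in A \,\big|\, \Omega_n\big)=\frac{\E_y\big[\1_A\big((M_k)_{k\le m}\big)\1_{\Omega_m}\, p_{n-m}(X_{m+1})\big]}{p_n(y)} .
\end{align*}
Therefore it suffices to prove a uniform ratio limit $p_{n-m}(x)/p_n(y)\to \lambda^{-m}h(x)/h(y)$ on the effective state space. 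Dominated convergence then gives the limit
\begin{align*}
\E_y\big[\1_A\,\1_{\Omega_m}\,\lambda^{-m}h(X_{m+1})/h(y)\big].
\end{align*}
Telescoping this expression shows that it is exactly the law of a Markov chain with kernel $p'$. It is a probability measure precisely because of the eigenvalue equation \eqref{eq_eigenfunction}.

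\emph{Step 1: choice of $t$.} I would first identify the effective state space $[-t,1]$. This is the set of values that $X_{i+1}$ can take on the event $\{X_{i+1}>\theta X_i\}$ with positive probability for some admissible $X_i$, iterated so that it is invariant. The cases are as follows.
\begin{itemize}
\item For $\theta\in[0,1)$, any $x\in[-a,1]$ can be reached (take $X_i$ small or negative), so $t=a$.
\item For $\theta<0$, the constraint $x_2>\theta x_1$ with $x_1\le 1$ forbids nothing above $\theta$, but from $x_1<0$ it forces $x_2>\theta x_1>0$. The reachable set from which persistence can continue for all times is then $[-\min(a,-1/\theta),1]$ or $[0,1]$, depending on $a$ and $\theta$; this gives the three options $t\in\{a,-\frac1\theta,0\}$.
\end{itemize}
Starting points outside $[-t,1]$ either leave this region after one step or are killed, so they only contribute a constant factor. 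I would then check that restricting the operator to $[-t,1]$ does not change $p_n$ after the first step.

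\emph{Step 2: spectral analysis.} Consider the operator
\begin{align*}
Kf(x)=\frac{1}{a+1}\int_{\{y>\theta x\}\cap[-t,1]} f(y)\,\d y
\end{align*}
on $L^2([-t,1])$ or $C([-t,1])$. Its kernel is bounded and nonnegative, so $K$ is compact, and $p_n=K^n\1$. I would show that some power $K^2$ (or $K^3$) has a kernel bounded below by a positive constant on $[-t,1]^2$. The idea is that from any $x$ one can pass through points near $1$ when $\theta\ge 0$, or near $0$ when $\theta<0$, and then reach any $y$. Jentzsch's theorem (Krein--Rutman) then gives:
\begin{itemize}
\item a simple dominant eigenvalue $\lambda=r(K)>0$;
\item a strictly positive eigenfunction $h$, unique up to scaling, extended to all of $\R$ by the right-hand side of \eqref{eq_eigenfunction};
\item a positive left eigenfunction $h^*$;
\item a spectral gap, meaning that all other spectral values have modulus at most $r<\lambda$.
\end{itemize}
Consequently
\begin{align*}
K^n\1=\lambda^n\,\langle h^*,\1\rangle\, h\,\big(1+O((r/\lambda)^n)\big)
\end{align*}
uniformly in $x$, which yields the required ratio limit.

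\emph{Main obstacle.} I expect the hardest part to be the positivity/irreducibility of a power of $K$ on exactly $[-t,1]$, and the correct identification of $t$ in each parameter region. Near the boundary points $-t$ (especially $-1/\theta$) the kernel degenerates, so uniform positivity of $K^j$ has to be checked carefully; if it fails, $h$ could vanish at the endpoint. There I would first try a direct bound on the chain of constraints $x_{j+1}>\theta x_j$ over two or three steps. If that fails, I would fall back on generating functions: the function $G(s,x)=\sum_n s^np_n(x)$ solves $G=1+sKG$, and its first singularity $s=1/\lambda$ is a simple pole with residue proportional to $h$.
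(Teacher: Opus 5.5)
Your opening reduction (Markov property, the ratio limit $p_{n-m}(x)/p_n(y)\to\lambda^{-m}h(x)/h(y)$, dominated convergence) is essentially the paper's Lemma~\ref{Prob=Lim_int_trans_kernel}. An abstract Krein--Rutman route to Theorem~\ref{maintheorem_2} would in principle be a legitimate alternative to the paper's explicit generating functions. The gap is in Step~2. Your claim that some power $K^j$ has a kernel bounded below (or even a.e.\ positive) on $[-t,1]^2$ is false in large parts of the parameter range, and the failure is not confined to an endpoint.

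\begin{itemize}
\item Blue region with $\theta\in(0,1)$, $a\geq 0$: from any $x$ the chain can only reach $(\theta^j x,1]$ in $j$ steps. So the kernel of $K^j$ vanishes on $\{(x,y): y\leq \theta^j x\}$, which has positive measure even within $[0,1]^2$, where $[0,1]$ is the invariant core.
\item Blue region with $\theta\in(-1,0)$, $a>-\theta$: every $K^j$ vanishes on $[-a,1]\times[-a,\theta]$, because $y>\theta x\geq\theta$.
\item Green region, and $a=-1/\theta$ in the blue region: the row $K(1/\theta,\cdot)$ is identically zero, which is why $h(1/\theta)=0$ there.
\end{itemize}

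Hence Jentzsch's theorem is not available. With it goes your justification of the uniqueness of $h$, the simplicity of $\lambda$, and the absence of other eigenvalues of modulus $\lambda$, i.e.\ of the asymptotics $p_n(x)\sim c\,h(x)\lambda^n$. Your Step~1 has the same blind spot. For $\theta\in(0,1)$, $a>0$, the interval $[-a,-\theta a]$ is never reached. For $\theta=0$, $a\geq 0$, only $(0,1]$ is reached, which is why the paper takes $t=0$ there.

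Closing this gap needs ideas that are not in your proposal:
\begin{itemize}
\item Split the state space into an absorbing core (e.g.\ $[0,1]$ in the blue region with $\theta>0$) and a transient part. On the transient part the restricted kernel is zero or quasi-nilpotent, so it does not contribute to the nonzero spectrum, and $h$ is extended to it through the eigenvalue equation.
\item On the core, replace primitivity by irreducibility plus an aperiodicity argument. If $Kg=\lambda e^{i\varphi}g$, then $|g|$ is a multiple of $h$. Equality in the triangle inequality at a state from which one step covers the whole core ($x=0$ for $\theta>0$, $x=1$ for $\theta<0$) then forces $\varphi=0$.
\end{itemize}
This is essentially what the paper does in the orange region (proof of Lemma~\ref{lem:analyticityofremovedu}).

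Your fallback, that $\sum_n s^n p_n(x)$ has a simple pole at $1/\lambda$ with residue proportional to $h$, is precisely the paper's route. As stated, however, it assumes the whole difficulty. The paper computes the generating function explicitly in each region: deformed exponentials in the blue region, the reduction green$\to$blue, the $p$/$q$ duality for yellow, and piecewise polynomials on the intervals $A_m$ in the orange region. It then shows, region by region, that the smallest singularity is a simple, strictly dominant zero of the denominator with non-vanishing numerator. The tools are the zeros of $E(\theta,\cdot)$, a positive-coefficient argument in the yellow region, and the Perron--Frobenius-type argument in the orange region. Without one of these two completions, the asymptotics $p_n^{a,y}(\theta)\sim c\,h(y)\lambda^{n+1}$ that feed Lemma~\ref{Prob=Lim_int_trans_kernel} remain unproved.
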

\begin{Remark}
  Similar to the transition kernel $p$ of the Markov chain $M$, the actual transition kernel of $M'$ is $2$-dimensional. Due to the relation analogous to \eqref{trans_prob_MC-2dim}, it suffices to consider the reduced one-dimensional kernel $p'$.
\end{Remark}
\begin{Remark}
  For the same $\lambda$ as in \eqref{eq_eigenfunction}, consider the left eigenvalue equation 
  \begin{align}\label{stationary_distribution}
      \lambda\eta(y)=\int_{\{y>\theta x\}\cap[-t,1]}\eta(x)\frac{1}{a+1}\,\d x.
  \end{align} 
  Interpreting \eqref{eq_eigenfunction} as an equation of the non-zero, non-negative and compact operator $T$, which is the sub-stochastic kernel of the Markov chain $M'$ killed when leaving $S$, \eqref{stationary_distribution} is an equation of the associated adjoint operator $T^*$ which is also non-zero, non-negative and compact. Since the spectral radius of $T$ is $\lambda$, the spectral radius of $T^*$ is $\lambda$ as well and thus, by the Krein-Rutman theorem, see e.g.\ Theorem 19.2 in \cite{KreinRutman}, $\lambda$ is an eigenvalue of $T^*$. In particular, there is a non-negative and non-vanishing solution $\eta$ of \eqref{stationary_distribution} such that $\pi(x)\coloneqq h(x)\eta(x)$, $x\in[-t,1]$, is a probability density. The stationary distribution of the Markov chain $M'$ is then given by $\pi$.
\end{Remark}
The largest eigenvalue $\lambda$ in \eqref{eq_eigenfunction} corresponds to the decay rate of the persistence probabilities, also called persistence exponent, defined by 
$$\log(\lambda)\coloneqq\lim_{n\to\infty}\frac{1}{n}\log p_n^a(\theta),$$ see Section 2.1 in \cite{AMZ}.
Our second main contribution is to provide an explicit formula for $h$ in all parameter regimes. To do so, we introduce the following complex function: for $r,z\in\C$ with $|r|\leq1$, define the deformed exponential function by 
\begin{align*}
E(r,z)\coloneqq \sum_{n=0}^\infty \frac{r^{n(n-1)/2}}{n!}z^n.
\end{align*}

Now, the second main theorem makes the above Doob $h$-transform representation explicit by giving the eigenfunction $h$, uniquely determined up to multiplication by a constant, in each of the six parameter regimes for $a$ and the coupling parameter $\theta$, corresponding to one of the regions blue (B), green (G), yellow (Y), orange (O) and white (W) used in \cite{AurzadaRaschel} and turquoise (T), all presented in Figure~\ref{fig:map} below. 

\begin{Theorem}\label{maintheorem}
    For $\theta\in[-1,1)$, $a>-1$, $t=t(\theta,a)$ given in the different regions below and $y\in[-t,1]$, the limiting process $M'$ in \eqref{def_lim_process} exists and is a time-homogeneous Markov chain on $[-t,1]$ with transition kernel 
\begin{align*}
     p'(x_1,\d x_2)&\coloneqq\mathds{1}_S(x_2,x_1)\frac{h(x_2)}{h(x_1)}\frac{1}{\lambda}\frac{1}{a+1}\,\d x_2,~~~~~~~~x_1,x_2\in[-t,1],
\end{align*}
 where $\lambda$ is the persistence exponent and $h:\R\to\R$ has the following form:
 \begin{align*}
     h(x)=\begin{cases}
         0,~~~&1\le \theta x,
         \\\text{positive and region dependent},&-t\le\theta x<1,
         \\h\left(\frac{-t}{\theta}\right),~~&\theta x<-t,
     \end{cases}
 \end{align*}
 where we distinguish the different regions:
\begin{enumerate}[label={\rm (\Alph{*})},ref={\rm (\Alph{*})}]
    \setcounter{enumi}{1}
     \item  Let either $\theta\in(0,1)$ and $a\geq 0$ or $\theta\in[-1,0)$ and $a\in[-\theta,\frac{-1}{\theta}]$. Then $t=a$, $\lambda$ is the inverse of the zero with the smallest absolute value of the function $z\mapsto E\left(\theta,\frac{-z}{a+1}\right)$ and
        $$h(x)=E\left(\theta,\frac{-\theta x}{\lambda (a+1)}\right),~~~-a\leq\theta x<1.
            $$
\setcounter{enumi}{6}
     \item  Let $\theta\in[-1,0)$ and $a\geq \frac{-1}{\theta}$. Then $t=-\frac{1}{\theta}$, $\lambda$ is the inverse of the zero with the smallest absolute value of the function $z\mapsto E\left(\theta,\frac{-z}{a+1}\right)$ and
     $$h(x)= E\left(\theta,\frac{-\theta x}{\lambda (a+1)}\right),~~~\frac{1}{\theta }\leq \theta x< 1.$$
\setcounter{enumi}{24}
     \item Let $\theta\in[-1,0)$ and $a\in(0,-\theta)$. Then $t=a$, $\lambda$ is the inverse of the zero with the smallest absolute value of the equation $\frac{E\left(\theta,\frac{za}{\theta(a+1)}\right)}{E\left(\theta,\frac{za}{a+1}\right)}=z\cdot\frac{\theta +a}{\theta(a+1)}$ and
     $$h(x)=\begin{cases}E\left(\theta,\frac{-\theta x}{\lambda(a+1)}\right),&~-a \leq \theta x\leq  \frac{-a}{\theta},
     \\\frac{(1-\theta x)}{\lambda(a+1)}E\left(\theta,\frac{a}{\lambda(a+1)}\right),&~~\frac{-a}{\theta}< \theta x< 1.\end{cases}$$
    \setcounter{enumi}{14}
     \item Let $\theta\in(0,1)$ and $a\in[-\theta,0)$. Then $t=a$, $\lambda$ is the inverse of the zero with the smallest absolute value of the function $u$ defined below and
     $$h(x)=\begin{cases}
           \sum_{i=0}^{m+1}c_i(\theta,\lambda,a,m+1)x^i,&\!\!\!\theta x\in \bigg[\frac{-a}{\theta^m},\frac{-a}{\theta^{m+1}}\bigg),0\!\le \!m\!\le\! p\!-\!1,
            \\ \frac{1}{\lambda(a+1)}\sum_{i=0}^{p}c_i(\theta,\lambda,a,p)\frac{1}{i+1}\left(1-(\theta x)^{i+1}\right),&\!\!\!\theta x\in \bigg[\frac{-a}{\theta^p},1\bigg),
        \end{cases}$$
        where $p=p(\theta,a)\!\coloneqq\!\max\{n\in\N_0:\theta^n\geq\!-a\}$, $u(z)\coloneqq 1-z\sum_{i=0}^p\frac{(\theta^i+a)^{i+1}}{(i+1)!\theta^{\frac{i(i+1)}{2}}(a+1)^{i+1}}(-z)^i$,
                \begin{align} \label{eqn:orange:defnciwithlambda}
             c_i(\theta,\lambda,a,m)\!=\!\frac{-1}{u'\big(\frac{1}{\lambda}\big)}\!\left(\frac{-1}{\lambda(a\!+\!1)}\right)^i\!\frac{\theta^\frac{i(i+1)}{2}}{i!}\sum_{{\ell}=0}^{m-i}\!\!\!\!\!\sum_{\substack{i_r\ge 0\\i_1+\dots +i_{\ell}= m\!-i\!-{\ell}}}\!\!\!\!\!\prod_{r=1}^{\ell}s(m\!-\!i\!-\!(r\!-\!1)\!-\!\sum_{j=1}^{r-1}i_j,i_r),
        \end{align} 
        with the convention that the sum on the right-hand side is equal to $1$ for ${\ell}=0$, $i=m$ and equal to $0$ for ${\ell}=0$, $i<m$ and the abbreviation
        \begin{align}\label{def_orange_s}
            s(k,i)\coloneqq \theta^{\frac{i(i+1)}{2}-ki}\left(\frac{a}{\lambda(a+1)}\right)^{i+1}\frac{1}{(i+1)!}\left(\frac{\lambda(a+1)(i+1)}{a}-\theta^{i+1-k}\right).
        \end{align}
\setcounter{enumi}{22}
     \item  Let $\theta\in[-1,1)$ and $-1<a\leq\min(-\theta,0)$. Then $t=a$ and $h(x)=\lambda=1$.
\setcounter{enumi}{19}
    \item Let $\theta=0$ and $a\ge 0$. Then $t=0$, $h(x)=1$ and $\lambda=\frac{1}{a+1}$.
\end{enumerate}
\end{Theorem}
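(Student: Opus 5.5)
Given Theorem~\ref{maintheorem_2}, the Doob $h$-transform structure is already established: the limit exists, it is Markov on $[-t,1]$ with kernel $p'$, and $h$ is the positive eigenfunction of \eqref{eq_eigenfunction} for the largest eigenvalue $\lambda$. So the plan is to verify, region by region, that the stated $t$ and $h$ solve \eqref{eq_eigenfunction}, are positive on the effective state space, and belong to the top eigenvalue, which is the persistence exponent.

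\textbf{Extension of $h$ and reduction to an ODE.} Write $H(v):=h(v/\theta)$ for the variable $v=\theta x$. Then \eqref{eq_eigenfunction} reads
\[
\lambda H(v)=\frac{1}{a+1}\int_{\max(v,-t)}^{1} h(y)\,\d y .
\]
This immediately gives $h=0$ when $v\ge1$, and makes $h$ constant when $v<-t$, as in the statement. For $-t\le v<1$, differentiate in $v$ to get the functional differential equation
\[
\lambda H'(v)=-\frac{1}{a+1}\,h(v)=-\frac{1}{a+1}\,H(\theta v),
\]
with boundary condition $H(1)=0$. The argument $\theta v$ may leave $[-t,1)$; there $H$ is replaced by the constant $H(-t)$ (or by $0$ beyond $1$).

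\textbf{Regions (B) and (G).} Here $\theta v$ stays in the region where the recursion closes. A power series $H(v)=\sum c_n v^n$ gives $c_{n+1}=-\theta^n c_n/(\lambda(a+1)(n+1))$, hence $H(v)=E(\theta,-v/(\lambda(a+1)))$. The condition $H(1)=0$ forces $1/\lambda$ to be a zero of $z\mapsto E(\theta,-z/(a+1))$. One checks that $\theta v\in[-t,1)$ whenever $v\in[-t,1)$ exactly under the region conditions, and this is where $t=a$ versus $t=-1/\theta$ comes from: for $\theta<0$ and $a\ge -1/\theta$, states below $1/\theta$ cannot reach $S$ in the next step.

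\textbf{Regions (Y), (O), (W), (T).}
\begin{itemize}
\item In (Y), $\theta v$ exits below $-a$ for $v>-a/\theta$. There $H'$ is constant, so $H$ is linear, which gives the second branch. Matching continuity at $v=-a/\theta$ with the series branch yields the stated transcendental equation for $z=1/\lambda$.
\item In (O), $\theta>0$ and $a<0$, and the exit happens iteratively on the intervals $[-a/\theta^m,-a/\theta^{m+1})$. Integrating the equation piecewise produces polynomials of increasing degree. The coefficients $c_i$ are then obtained by solving the recursion, best via generating functions in $n$ applied to $p_n^a(\theta)$. The factor $-1/u'(1/\lambda)$ is a residue normalization, and the combinatorial sum over compositions with the factors $s(k,i)$ comes from unwinding the nested recursion. $u(1/\lambda)=0$ is the consistency condition at $v=1$.
\item Region (W) is trivial: $-a\ge\theta$ on $S$ makes $\P=1$, so $h=\lambda=1$.
\item In (T), $\theta=0$ and the variables decouple, giving $\lambda=1/(a+1)$ and $h=1$.
\end{itemize}

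\textbf{Largest eigenvalue and positivity.} It remains to show that the chosen root is the smallest in modulus, i.e.\ that it gives the largest eigenvalue and a positive $h$. For this I would compute the generating function $\sum_n p_n^a(\theta)z^n$, whose denominator is the same entire function (for instance $E(\theta,-z/(a+1))$). The radius of convergence is then governed by the smallest zero, which shows $\lambda$ is the persistence exponent. Positivity of $h$ then follows from Krein--Rutman uniqueness in Theorem~\ref{maintheorem_2}.

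\textbf{Main obstacle.} The hardest step is region (O): deriving the closed-form coefficients $c_i$ from the piecewise recursion and verifying that the matching conditions at each breakpoint $-a/\theta^m$ are consistent. I would first handle small $p$ explicitly and then argue by induction on $m$.
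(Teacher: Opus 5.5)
Your plan has a genuine gap at the main claim of the theorem: the existence of the conditioned limit. You take Theorem~\ref{maintheorem_2} as given, but in the paper it has no independent proof. It is a by-product of the region-by-region argument for Theorem~\ref{maintheorem} (see the outline and the sentence after Lemma~\ref{asymptotic_blau_erste}), so relying on it here is circular.

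Solving \eqref{eq_eigenfunction} cannot replace it. Knowing that some $h$ satisfies $\lambda h=Th$ says nothing about the ratios $\P_{x_m}(\Omega_{n-m})/\P_{y}(\Omega_n)$ in Lemma~\ref{Prob=Lim_int_trans_kernel}. What is needed is the sharp asymptotics $p_n^{a,y}(\theta)\sim c(\theta,a)\,h(y)\,\lambda^{n+1}$ for every fixed starting point $y$, plus the domination from Lemma~\ref{P_x_estimate}.

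The paper gets this from the fixed-start generating function, which satisfies $\partial_y\hat P_{a,y,\theta}(z)=-\frac{\theta z}{a+1}\hat P_{a,\theta y,\theta}(z)$. This is the same pantograph equation you write for $H$, but for the whole generating function rather than its residue. The paper solves it in closed form in (B), reduces (G) to (B) by conditioning on $X_i\ge 1/\theta$, reduces (Y) to (G) via the $p$/$q$ duality and Lemma~\ref{zusammenhang_genfct_yellow_p_und_q}, and solves (O) piecewise. It then applies Lemma~\ref{asymptotic_powerseries} at a simple dominant pole.

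Your $h$ is exactly the residue of $\hat P_{a,y,\theta}$ at $z_0=1/\lambda$. So your ODE computations do correctly reproduce the formulas for $h$ and the equations for $\lambda$: \eqref{root_yellow_2} from matching at $-a/\theta$, and $u(1/\lambda)=0$ from $H(1)=0$ in (O). They do not give the convergence. An abstract substitute (Krein--Rutman/Jentzsch giving $T^n\mathds{1}\sim c\lambda^n h$) is not available off the shelf either. In (B) and (G) part of the state space is transient, so the killed kernel is in general not irreducible on $[-t,1]$. For the same reason you cannot simply quote Krein--Rutman uniqueness to get positivity of $h$.

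The dominance step is also only asserted. The radius of convergence of $\hat P_{a,\theta}$ gives the exponential rate. For the transfer to $p_n^{a,y}$, and for knowing that your root yields the top eigenvalue with positive $h$, you must show that the smallest-modulus zero of the relevant denominator is real, positive, simple, not cancelled by the numerator, and the only singularity on its circle. This is different in each region:
\begin{itemize}
\item In (B) and (G) it follows from Lemma~\ref{nullstellen_exponentialfunktion} together with the checks $E(\theta,-\theta z_0/(a+1))\neq0$ and $E(\theta,-\theta z_0 y/(a+1))\neq0$.
\item In (Y) it needs the rewriting of \eqref{root_yellow_2} as $\tilde g(z)=1$ with a positive-coefficient series (Lemma~\ref{lemma_root_yellow}), and the sign $g'(z_0)<0$.
\item In (O) it needs $u'(z_0)<0$ by induction on $p$ (Lemma~\ref{orange_F_monoton}), and $G_{a,y,\theta}(z_0)>0$ via the lower bound of Lemma~\ref{orange_aux_lemma_p_n} (Lemma~\ref{lem:gofz0positive}). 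It also needs a Perron--Frobenius-type comparison of residues ruling out other zeros of $u$ on $|z|=z_0$ (Lemma~\ref{lem:analyticityofremovedu}).
\end{itemize}
These are the substantive steps of the proof, and your proposal supplies none of them.
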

In the following, we will use the notation 
    \begin{align*}
    p_n^{a,y}(\theta)\coloneqq\P_{y}(\Omega_n)=\P(X_2\geq\theta y,X_3\geq\theta X_2,\dots,X_{n+1}\geq\theta X_n),~~~~n\ge 1,
    \end{align*}
    with $p_0^{a,y}(\theta)\coloneqq1$  for the persistence probability of the Markov chain $(M_i)$ staying in $S$ and started at $M_0=(y,0)$ with $y\in\R$. To find a Doob $h$-representation, we need the asymptotic behavior of $(p_n^{a,y}(\theta))$ as $n\to\infty$. This will be deduced from its generating function and the generating function of the $(p_n(\theta))$ defined by 
    $$\hat{P}_{a,y,\theta}(z)\coloneqq \sum_{n=0}^{\infty}p_n^{a,y}(\theta)z^n,\qquad \hat{P}_{a,\theta}(z)\coloneqq \sum_{n=0}^{\infty}p_n^{a}(\theta)z^n.$$
\begin{figure}[H]
\begin{center}
\begin{tikzpicture}[scale=0.8]
\filldraw[fill=blue!50!white, draw=blue!50!white] (-4,4) -- (0,0) -- (0,4) -- cycle; 
\filldraw[fill=blue!50!white, draw=blue!50!white] (0,4) -- (0,0) -- (4,0) -- (4,4) -- cycle; 
\filldraw[fill=blue!50!white, draw=blue!50!white] (4,4) -- (4,8) -- (-2.1,8)  -- (-2.1,4) -- cycle; 

\fill[blue!50!white]  plot[domain=-4:-2, samples=100] (\x, {-16/\x})   -- (-2,4) -- (-4,4) -- cycle;

\fill[green!60!white]
  plot[domain=-4:-2, samples=100] (\x, {-16/\x})
  -- (-2, 8.01) -- (-4, 8.01) -- cycle;

\filldraw[fill=yellow!60!white, draw=yellow!60!white] (-4,4) -- (0,0) -- (-4,0) -- cycle;

\filldraw[fill=orange!80!white, draw=orange!80!white] (0,0) -- (4,-4) -- (4,0) -- cycle;

\draw[->,thick] (-3.5,2) -- (-3.5,6);

\node [black] at (-2,-2) {$p_n^{a,y}(\theta)\equiv 1$};
\node [black] at (1.5,-3.3) {$p_n^{a,y}(\theta)\equiv 1$};

\draw[->,thick] (-3.5,7) -- (-2.6,6.15);
\draw[-,dashed] (4,4) -- (-4,4);

\draw[->,very thick] (-4,0) -- (4,0);
\node [black] at (-3,4.4) {\eqref{generatingfunction_yellow_and_green_rel}};
\draw[-,very thick] (0,-4) -- (0,0);
\draw[->, RoyalBlue, very thick] (0,0) -- (0,9);
\node [black] at (-2.4,6.6) {\eqref{reduction_green_blue_prob}};
\node [black] at (4.3,0) {\large $\theta$};
\node [black] at (4,-0.5) {\large $1$};
\node [black] at (-4,-0.5) {\large $-1$};
\node [black] at (0,9.5) {\large $a$};
\node [black] at (0.3,4.4) {\large $1$};
\node [black] at (0.5,-0.5) {\large $0$};
\node [black] at (-0.7,-3.7) {\large $-1$};

\end{tikzpicture}
\end{center}
\caption{Phase diagram of the main results}
\label{fig:map}
\end{figure}
\begin{Remark}
  The form of $h$ in the trivial cases $\theta x\ge1$ and $\theta x<-t$ follows directly from equation \eqref{eq_eigenfunction} and the continuity of $h$.
\end{Remark}
\begin{Remark}
Note that in the blue region (B) for the special case $a=-1/\theta$ as well as the for the entire green region (G), the point $x=1/\theta$ is not part of the state space of the conditioned Markov chain, because starting there makes the chain violate the conditioning even in the first step. We did not mark this in the theorems for the sake of readability; in fact, the state space should be read as $(-\frac{1}{\theta},1]$ rather than $[-\frac{1}{\theta},1]$.
\end{Remark}
\begin{Remark}
   In the white region (W), $p_n^{a,y}(\theta)\equiv 1$, so Theorems~\ref{maintheorem_2} and~\ref{maintheorem} follow directly.
\end{Remark}
\begin{Remark}
    In the turquoise region (T), the result follows since $(x_2,x_1)\in S$ is equivalent to $x_2\ge0$. 
\end{Remark}
\begin{Remark}
In the blue region (B), part of the state space $[-a,1]$ is transient. In fact, for $\theta\in(0,1)$, we have $X_i\geq -a \theta^{i-1}$ for all $i\geq 2$, under the conditioning. Similarly, for $\theta\in[-1,0)$, $X_i \geq \theta$ for $i\geq 2$.  A similar phenomenon appears in the green region (G): $X_i\geq \theta$ for $i\geq 2$ under the conditioning.
\end{Remark}

\begin{Remark}
    The case $\theta=1$ is excluded in Theorem~\ref{maintheorem} since the conditioning in \eqref{def_lim_process} no longer produces a non-trivial Doob-type limit. Indeed, consider exemplarily the following one-dimensional distribution: for $x>y$, we have $\P(M_1\in[x,1]\times\{y\}|\Omega_n) = \frac{\P( x\leq X_2\leq X_3\leq \ldots\leq X_{n+1})}{\P( y\leq X_2\leq X_3\leq \ldots \leq X_{n+1})}\to 0$, as $n\to\infty$. In fact, the limit in \eqref{def_lim_process} only leads to the degenerate Markov chain $y=X_2=X_3=\ldots$. On the level of persistence probabilities, this corresponds to the fact that for $y\in[-a,1]$, $p_n^{a,y}(1)=\frac{1}{n!}\left(\frac{1-y}{a+1}\right)^n$. Similarly, or equivalently, we have $p_n^a(1)=\frac{1}{n!}$, a formula that first appeared in \cite{12,15} for $a=1$.
\end{Remark}
\paragraph{Related work.}
The starting point of our work is the setting from \cite{AurzadaRaschel}, where explicit expressions for the generating function of the persistence probabilities of a moving average process with uniform innovations and without fixed starting point in the different regions are derived. In particular, the results obtained in \cite{AurzadaRaschel} for $|\theta|<1$ follow from our results by integrating out the starting point $y$. Our approach for calculating the generating functions builds on similar techniques.
\\General Markov chains and their non-exit probabilities from subsets of the state space are studied in \cite{AMZ}. In particular, the eigenvalue equation and the associated persistence exponent of moving average processes with uniform innovations used in the present paper are analyzed.
\\ The Doob transformation is a well-studied technique in probability and has been used in many contexts. A particularly nice exposition of the case of Markov chains on a countable state space conditioned to stay inside a set and started from a fixed point is provided in \cite{Swart} (see Theorem 2.11). This resembles the main result of the present paper, which establishes an analogous representation for a special Markov chain on a continuous and compact state space. Related results using the Doob-transform for random walks can be found in the fundamental paper \cite{BertoinDoney}. Further closely related results are \cite{Wachtel1,Wachtel4,Wachtel6,Wachtel7,Wachtel2,Wachtel3,EichelsbacherKonig}. The framework of Doob-transforms was first introduced in the classical work \cite{doob}. General theory on Doob-transforms can be found in \cite{RogersWilliams1} with more detailed work in the setting of Brownian motion provided in \cite{RogersWilliams2}. Another fundamental reference is \cite{ColletMartinezSanMartin}, see also \cite{ChampagnatVillemonais2016, SenetaVereJones}.
\\Concerning persistence probabilities, we also mention \cite{aurzada2025persistenceprobabilitiesma1sequences}, where moving average processes with Laplace innovations are studied, and \cite{2}, which investigates MA(1)-processes with standard normally distributed innovations using functional analytic methods. Further estimates in the Gaussian setting can be found in \cite{12}. The study of persistence for MA(1)-type stationary sequences goes back, in this context, to Majumdar and Dhar \cite{15}, who derived nonlocal generating-function equations for such persistence probabilities. Closely related problems for random walks in cones are obtained in \cite{Wachtel5}.
\\ Related questions for autoregressive processes are addressed in \cite{1,3,5,9,DonnartSimon,11,14}.
\\A motivation for studying persistence from a theoretical physics point of view as well as a survey of that perspective is provided in  \cite{8}. The angle on persistence probabilities is to study how quickly a physical system initialized out of equilibrium returns to equilibrium.
\paragraph{Extensions.} In the present work, we restrict to the non-expanding regime $|\theta|\le1$. In \cite{AurzadaRaschel}, the generating function of the $(p_n^a(\theta))$ is computed also for the expanding case $|\theta|>1$.
However, the dualities used there are not applicable for fixed starting points. Nonetheless, our techniques can be extended to the case $|\theta|>1$, but since this is technically more involved, an extension to this parameter region will be postponed to future work. Another possible extension of the presented techniques are MA(1)-processes with different  innovations and autoregressive processes.
\paragraph{Outline.}
The paper is organized as follows. In the next section, we will prove some technical lemmas.  The proof of Theorem~\ref{maintheorem} is carried out in the subsequent five Sections~\ref{blue_region}-\ref{orange_region}, assigned to the respective regions. In each region, we begin by computing the generating function of the persistence probabilities $\left(p_n^{a,y}(\theta)\right)$, deduce their asymptotic behavior and conclude by finding explicit formulas for the eigenfunction $h$ and eigenvalue $\lambda$. By showing that the concrete function $h$ from Theorem~\ref{maintheorem} satisfies the eigenvalue equation \eqref{eq_eigenfunction} with eigenvalue $\lambda$, we immediately prove Theorem~\ref{maintheorem_2} as well.
\section{Preliminaries}
For the persistence probability of the process started from a fixed point, we have the following estimates, the proofs of which are immediate from the definition.
\begin{Lemma}\label{P_x_estimate}
    Let $x\in[-a,1]$. Then 
    \begin{align*}
        \P_x(\Omega_n)\leq\P_{-a}(\Omega_n),\quad \text{for $\theta\geq 0$},\qquad \text{and }\qquad  \P_x(\Omega_n)\leq\P_{1}(\Omega_n),\quad \text{for $\theta\leq 0$}.
    \end{align*}
\end{Lemma}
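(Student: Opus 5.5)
The plan is a monotonicity argument in the starting point, using the explicit form of $\P_x(\Omega_n)$. By definition,
\begin{align*}
\P_x(\Omega_n)=\P(X_2\ge\theta x,\,X_3\ge\theta X_2,\dots,X_{n+1}\ge\theta X_n),
\end{align*}
where $X_2,\dots,X_{n+1}$ are i.i.d.\ uniform on $[-a,1]$ and independent of the starting value. The starting point $x$ enters only through the first constraint $X_2\ge\theta x$; all later constraints do not depend on $x$. So I will write
\begin{align*}
\P_x(\Omega_n)=\int_{[-a,1]}\mathds{1}_{\{x_2\ge\theta x\}}\,g_{n-1}(x_2)\,\frac{1}{a+1}\,\d x_2,
\end{align*}
with $g_{n-1}(x_2)\coloneqq\P_{x_2}(\Omega_{n-1})\ge 0$ and $g_0\equiv1$. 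Here I use the Markov property at the first step; equivalently, this is Fubini applied to the joint uniform density.

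Since the integrand is nonnegative, it suffices to show that the set $\{x_2\in[-a,1]:x_2\ge\theta x\}$ increases as $\theta x$ decreases. I then minimize $\theta x$ over $x\in[-a,1]$.

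For $\theta\ge0$, the map $x\mapsto\theta x$ is nondecreasing, so $\theta x\ge-\theta a$ for every $x\ge-a$. Hence $\{x_2\ge\theta x\}\subseteq\{x_2\ge\theta(-a)\}$, and integrating gives $\P_x(\Omega_n)\le\P_{-a}(\Omega_n)$.

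For $\theta\le0$, the map $x\mapsto\theta x$ is nonincreasing, so $\theta x\ge\theta\cdot1$ for every $x\le 1$. Hence $\{x_2\ge\theta x\}\subseteq\{x_2\ge\theta\}$, and integrating gives $\P_x(\Omega_n)\le\P_1(\Omega_n)$.

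No step is a real obstacle; the only point needing care is the decomposition above. Alternatively, one can avoid conditioning altogether by a pathwise comparison: fix the same realization $(X_2,\dots,X_{n+1})$ for both starting points. Then the event for start $x$ is contained in the event for start $-a$ (respectively $1$), since only the first inequality differs and it is weaker for the latter.
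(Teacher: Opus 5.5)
Your proposal is correct and is essentially the argument the paper has in mind; the paper gives no written proof and only remarks that the estimates are ``immediate from the definition.'' The starting point enters only through the first constraint $X_2\ge\theta x$, and $\theta x$ is minimized over $[-a,1]$ at $x=-a$ when $\theta\ge0$ and at $x=1$ when $\theta\le0$. Your pathwise comparison makes this explicit without needing the Markov decomposition.
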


These estimates justify the application of the dominated convergence theorem in the following lemmas for exchanging limits and sums. The next lemma shows that the asymptotics of the persistence probability implies the eigenvalue equation.

\begin{Lemma}\label{Prob=Lim_int_ef}
  Assume that there are constants $\lambda=\lambda(\theta,a)>0$ and $c(\theta,a)>0$ not depending on $x$ or $n$ and a continuous function $h$ such that
  $$p_n^{a,x}(\theta)\sim c(\theta,a)h(x)\lambda^{n+1}$$
  holds as $n\to\infty$ for all $x\in[-a,1]$. Then,
  \begin{align} \label{eqn:ppimplyeveqn}
    \lambda h(x) & =\lim_{n\to\infty}\frac{1}{c(\theta,a)\lambda^{n}}\, p_n^{a,x}(\theta) =\int_{\{y>\theta x\}\cap[-a,1]}h(y)\,\frac{1}{a+1}\,\d y.
  \end{align}
  \end{Lemma}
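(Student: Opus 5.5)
The plan is to condition on the first step and then pass to the limit under the integral. For $n\ge1$, the Markov property at time $1$ gives the one-step decomposition
\begin{align*}
p_n^{a,x}(\theta)=\P_x(\Omega_n)=\int_{\{y>\theta x\}\cap[-a,1]} \P_y(\Omega_{n-1})\,\frac{1}{a+1}\,\d y=\int_{\{y>\theta x\}\cap[-a,1]} p_{n-1}^{a,y}(\theta)\,\frac{1}{a+1}\,\d y .
\end{align*}
This holds because $\Omega_n$ requires $X_2>\theta X_1=\theta x$ (the boundary event $X_2=\theta x$ has probability zero, so $\ge$ versus $>$ is irrelevant). Conditionally on $X_2=y$, the remaining constraints $X_3\ge\theta X_2,\dots,X_{n+1}\ge\theta X_n$ are exactly the event $\Omega_{n-1}$ for the chain started at $y$. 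They depend only on the i.i.d.\ variables $X_3,\dots,X_{n+1}$, which are independent of $X_1,X_2$.

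The first equality in \eqref{eqn:ppimplyeveqn} is just the hypothesis rewritten. From $p_n^{a,x}(\theta)\sim c(\theta,a)h(x)\lambda^{n+1}$ we get
\begin{align*}
\frac{p_n^{a,x}(\theta)}{c(\theta,a)\lambda^n}\to\lambda h(x).
\end{align*}
For the second equality, divide the decomposition by $c(\theta,a)\lambda^n$. The integrand becomes $p_{n-1}^{a,y}(\theta)/(c(\theta,a)\lambda^{n-1})\cdot\lambda^{-1}\cdot\lambda$, which is simply $p_{n-1}^{a,y}(\theta)/(c(\theta,a)\lambda^{n-1})$. By the hypothesis (with $n-1$ in place of $n$), this converges pointwise to $\lambda h(y)$ for each $y\in[-a,1]$. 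Hence the right-hand side should converge to $\int_{\{y>\theta x\}\cap[-a,1]}\lambda h(y)\frac{1}{a+1}\d y$ divided by $\lambda$ (tracking exponents carefully: $n$ versus $n+1$), which yields exactly $\int h(y)\frac{1}{a+1}\d y$.

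The only genuine obstacle is justifying the exchange of limit and integral. Here I would invoke Lemma~\ref{P_x_estimate}: for $\theta\ge0$, $p_{n-1}^{a,y}(\theta)\le p_{n-1}^{a,-a}(\theta)$, and for $\theta\le0$, $p_{n-1}^{a,y}(\theta)\le p_{n-1}^{a,1}(\theta)$, uniformly in $y\in[-a,1]$. The hypothesis applied at the fixed point $-a$ or $1$ shows that
\begin{align*}
\frac{p_{n-1}^{a,-a}(\theta)}{c\lambda^{n-1}}\qquad\text{and}\qquad \frac{p_{n-1}^{a,1}(\theta)}{c\lambda^{n-1}}
\end{align*}
are convergent, hence bounded in $n$. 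This gives a constant dominating function on the bounded interval $[-a,1]$, and dominated convergence applies. Continuity of $h$ guarantees that the limit integrand is measurable and bounded, though it is not strictly needed. Combining the pointwise limit of the left side with the limit of the right side gives \eqref{eqn:ppimplyeveqn}.
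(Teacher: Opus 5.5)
Your proposal is correct and follows essentially the same route as the paper: the one-step Markov decomposition, a uniform majorant from Lemma~\ref{P_x_estimate}, dominated convergence, and then the hypothesis. You make the majorant more explicit than the paper does, by noting that $p_{n-1}^{a,-a}(\theta)/(c\lambda^{n-1})$ (or $p_{n-1}^{a,1}(\theta)/(c\lambda^{n-1})$) converges and is therefore bounded. There is one bookkeeping slip to fix: after dividing by $c(\theta,a)\lambda^n$, the integrand is $\lambda^{-1}\,p_{n-1}^{a,y}(\theta)/(c(\theta,a)\lambda^{n-1})$ rather than $p_{n-1}^{a,y}(\theta)/(c(\theta,a)\lambda^{n-1})$, and it converges directly to $h(y)$. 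That is the limit your final line actually uses, so the conclusion stands once the intermediate expression is corrected.
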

    \begin{proof}
  Due to the Markov property of $(M_i)_{i\in\N}$ with transition kernel $p$:
\begin{align*}
     \lim_{n\to\infty}\frac{1}{c(\theta,a)\lambda^{n}} \, p_n^{a,x}(\theta)
      &=\lim_{n\to\infty}\frac{1}{c(\theta,a)\lambda^{n}}\,\P_x(M_1\in S,\dots,M_n\in S)
      \\&=\lim_{n\to\infty}\frac{1}{c(\theta,a)\lambda^{n}}\int\P_y(M_1\in S,\dots M_{n-1}\in S)\1_{\{(y,x)\in S\}}p(x,\d y)
      \\&=\lim_{n\to\infty}\int_{\{y>\theta x\}\cap[-a,1]}\frac{\P_y(M_1\in S,\dots M_{n-1}\in S)}{c(\theta,a)\lambda^{n}}\,\frac{1}{a+1}\,\d y
      \\&\overset{ }{=}\int_{\{y>\theta x\}\cap[-a,1]}\lim_{n\to\infty}\frac{\P_y(M_1\in S,\dots M_{n-1}\in S)}{c(\theta,a)\lambda^{n}}\frac{1}{a+1}\,\d y
      \\&=\int_{\{y>\theta x\}\cap[-a,1]}\lim_{n\to\infty}\frac{p_{n-1}^{a,y}(\theta)}{c(\theta,a)\lambda^{n}}\frac{1}{a+1}\,\d y
      \\&=\int_{\{y>\theta x\}\cap[-a,1]}h(y)\,\frac{1}{a+1}\,\d y,
\end{align*}
where we used Lemma~\ref{P_x_estimate} to find an integrable majorant for the exchange of limit and integral and we used the assumption in the last step. 
\end{proof}
The next lemma contains the core of the classical Doob transformation and will be applied in order to show Theorem~\ref{maintheorem}.
\begin{Lemma}\label{Prob=Lim_int_trans_kernel}
Assume that
  $$p_n^{a,x}(\theta)\sim c(\theta,a)h(x)\lambda^{n+1}$$
   holds as $n\to\infty$ for all $x\in[-a,1]$, a continuous function $h$ which is strictly positive on $[-a,1]$ and constants $\lambda=\lambda(\theta,a)>0$, $c(\theta,a)>0$ not depending on $x$ or $n$.
Then, for any $y\in[-a,1]$, the limit
\begin{align*}
    \lim_{n\to\infty}\P_y((M_k)_{1\leq k\leq m}\in\cdot~|\Omega_n)=\P_y((M'_k)_{1\leq k\leq m}\in\cdot~)
\end{align*}
exists, and $M'$ is a Markov chain with reduced transition kernel 
\begin{align} \label{eqn:generalkernel2}
    p'(x_1,\d x_2)\coloneqq \mathds{1}_S(x_2,x_1)\frac{h(x_2)}{h(x_{1})}\frac{1}{\lambda}\frac{1}{a+1}\,\d x_2,~~~~~~~~x_1,x_2\in[-a,1].
\end{align}
\end{Lemma}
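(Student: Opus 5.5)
The plan is the classical Doob argument: write the conditional finite-dimensional law as an integral over the first $m$ steps, apply the Markov property at time $m$, and pass to the limit using the assumed asymptotics together with dominated convergence, as in the proof of Lemma~\ref{Prob=Lim_int_ef}.

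Fix $y\in[-a,1]$, $m\ge 1$, and a bounded measurable set $A$ (or a bounded measurable test function $f$) of the first $m$ reduced coordinates $x_2,\dots,x_{m+1}$, where $x_1=y$. For $n\ge m$, the Markov property of $(M_i)$ at time $m$ gives
\begin{align*}
\P_y\big((M_k)_{1\le k\le m}\in A,\ \Omega_n\big)=\int_A \prod_{k=1}^{m}\mathds{1}_S(x_{k+1},x_k)\, p^{a,x_{m+1}}_{n-m}(\theta)\,\frac{1}{(a+1)^m}\,\d x_2\cdots \d x_{m+1}.
\end{align*}
Here the path up to time $m$ contributes the indicator constraints. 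After time $m$, the event $\Omega_n$ requires the chain restarted from $X_{m+1}=x_{m+1}$ to stay in $S$ for $n-m$ further steps, and by time-homogeneity this has probability $p_{n-m}^{a,x_{m+1}}(\theta)$. We divide this by $\P_y(\Omega_n)=p_n^{a,y}(\theta)$.

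For the limit, the hypothesis gives pointwise
\begin{align*}
\frac{p_{n-m}^{a,x_{m+1}}(\theta)}{p_n^{a,y}(\theta)}\to\frac{h(x_{m+1})\lambda^{n-m+1}}{h(y)\lambda^{n+1}}=\frac{h(x_{m+1})}{h(y)}\lambda^{-m},
\end{align*}
using $c(\theta,a)>0$ and $h(y)>0$. For domination, Lemma~\ref{P_x_estimate} bounds the numerator by $p_{n-m}^{a,-a}(\theta)$ or $p_{n-m}^{a,1}(\theta)$, depending on the sign of $\theta$. The ratio of this bound to $p_n^{a,y}(\theta)$ converges to the finite constant $h(\pm\text{endpoint})\lambda^{-m}/h(y)$, so it is bounded uniformly in $n$. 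Dominated convergence on the compact set $[-a,1]^m$ then gives
\begin{align*}
\int_A\prod_{k=1}^m \mathds{1}_S(x_{k+1},x_k)\frac{h(x_{m+1})}{h(y)\lambda^m}\frac{1}{(a+1)^m}\,\d x_2\cdots\d x_{m+1}.
\end{align*}
Writing $h(x_{m+1})/h(y)$ as the telescoping product $\prod_{k=1}^m h(x_{k+1})/h(x_k)$, which is legitimate because $h>0$ on $[-a,1]$, this expression equals $\int_A\prod_{k=1}^m p'(x_k,\d x_{k+1})$ with $p'$ as in \eqref{eqn:generalkernel2}.

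It remains to show that these limits are the finite-dimensional laws of a Markov chain. By Lemma~\ref{Prob=Lim_int_ef}, whose hypotheses hold here, we have $\int p'(x,\d z)=\frac{1}{\lambda h(x)}\int_{\{z>\theta x\}\cap[-a,1]}h(z)\frac{1}{a+1}\,\d z=1$, so $p'$ is a genuine stochastic kernel on $[-a,1]$. The product-kernel formula therefore defines a consistent family of distributions, namely those of the Markov chain with kernel $p'$ started at $y$; consistency can also be seen directly because marginalizing the last coordinate uses exactly this normalization. The reduction from the two-dimensional kernel to the reduced one is as in \eqref{trans_prob_MC-2dim}.

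The only delicate step is the domination. It needs the ratio $p^{a,\mathrm{end}}_{n-m}/p^{a,y}_n$ to stay bounded, which follows from the assumption holding at the endpoint together with positivity of $h$ there. I will check that the endpoint $-a$ or $1$ indeed lies in $[-a,1]$, so that the hypothesis applies to it, and that $h$ is positive on all of $[-a,1]$.
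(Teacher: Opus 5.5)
Your proposal is correct and follows the paper's proof essentially step by step: the Markov property at time $m$, the pointwise limit of $p^{a,x_{m+1}}_{n-m}(\theta)/p^{a,y}_n(\theta)$ from the assumed asymptotics, domination via Lemma~\ref{P_x_estimate}, the telescoping product, and normalization of $p'$ via the eigenvalue equation from Lemma~\ref{Prob=Lim_int_ef}. You also spell out why the majorant ratio stays bounded, which the paper only mentions in passing; for that step only the hypothesis at the endpoint and $h(y)>0$ are needed, not positivity of $h$ at the endpoint.
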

\begin{proof}
By Lemma~\ref{Prob=Lim_int_ef} and the assumption, the function $h$ satisfies equation \eqref{eqn:ppimplyeveqn}. 
    \\Let $C_1\dots , C_m\in\mathcal{B}(\R)$, $\mathbf{x}\coloneqq(x_1,\dots,x_m)\in [-a,1]^m$ and set $x_0\coloneqq y$. For fixed $m\in\N$, we have, by the Markov property,
\begin{align*}
    &\lim_{n\to\infty}\P_{y}(M_1\in C_1\times\{y\},M_2\in C_2\times C_1,\dots,M_m\in C_m\times C_{m-1}~|\Omega_n)
    \\
    &=\lim_{n\to\infty}\int_{[-a,1]^m}\1_{C_1\times\dots\times C_m}(\mathbf{x})\left[\prod_{i=1}^m p(x_{i-1},\d x_i)\mathds{1}_S(x_i,x_{i-1})\right]\frac{\P_{x_m}(\Omega_{n-m})}{\P_{x_0}(\Omega_n)} 
    \\&=\int_{[-a,1]^m}\1_{C_1\times\dots\times C_m}(\mathbf{x})\left[\prod_{i=1}^m p(x_{i-1},\d x_i)\mathds{1}_S(x_i,x_{i-1})\right]\lim_{n\to\infty}\frac{\P_{x_m}(\Omega_{n-m})}{\P_{x_0}(\Omega_n)}
   \\&=\int_{[-a,1]^m}\1_{C_1\times\dots\times C_m}(\mathbf{x})\left[\prod_{i=1}^m \frac{1}{a+1}\,\d x_i\mathds{1}_S(x_i,x_{i-1})\right]\frac{h(x_m)}{h(x_0)}\lambda^{-m}
    \\&=\int_{[-a,1]^m}\1_{C_1\times\dots\times C_m}(\mathbf{x})\left[\prod_{i=1}^m \mathds{1}_S(x_i,x_{i-1})\frac{h(x_i)}{h(x_{i-1})}\frac{1}{\lambda}\frac{1}{a+1}\,\d x_i\right],
\end{align*}
where we used Lemma~\ref{P_x_estimate} to find an integrable majorant for the exchange of limit and integral and we used the assumption on the asymptotics of $(p_n^{a,x}(\theta))$ in the third step.
Thus, the process with density given by (\ref{eqn:generalkernel2}) is the limiting process $M'$.
One can check that the density is a Markov kernel using the eigenvalue equation \eqref{eqn:ppimplyeveqn}.
    \end{proof}
Next, we mention some properties of the deformed exponential function. It is well-known that it satisfies the following differential equations.
\begin{Lemma}
  For any $a,b,x\in\C$ with $|a|\le1$ and $n\geq 1$ we have
\begin{align}\label{eqn:dgl_n_exponential}
    \frac{\partial^n}{\partial x^n}E(a,bx)=a^{\frac{n(n-1)}{2}}b^nE(a,a^nbx).
\end{align}
\end{Lemma}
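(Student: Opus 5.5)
The identity will follow from a power series computation. Since $|a|\le 1$, the series $E(a,w)=\sum_{k\ge0} a^{k(k-1)/2}w^k/k!$ has coefficients bounded in modulus by $1/k!$. It therefore defines an entire function of $w$, and $x\mapsto E(a,bx)$ is entire in $x$. Term-by-term differentiation is then legitimate to any order, because a power series may be differentiated inside its disc of convergence, and here that disc is all of $\C$.

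I would prove the case $n=1$ directly and then induct on $n$. For $n=1$, differentiating term by term gives
\begin{align*}
\frac{\partial}{\partial x}E(a,bx)=\sum_{k\ge1}\frac{a^{k(k-1)/2}}{(k-1)!}b^k x^{k-1}=b\sum_{j\ge0}\frac{a^{(j+1)j/2}}{j!}b^jx^j ,
\end{align*}
after the shift $j=k-1$. Since $(j+1)j/2=j(j-1)/2+j$, the right-hand side equals $b\sum_{j\ge0}\frac{a^{j(j-1)/2}}{j!}(abx)^j=b\,E(a,abx)$, which is the claim for $n=1$.

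For the inductive step, assume the formula holds for $n$. Differentiating once more and applying the $n=1$ case with $b$ replaced by $a^nb$ gives
\begin{align*}
\frac{\partial^{n+1}}{\partial x^{n+1}}E(a,bx)=a^{n(n-1)/2}b^n\cdot a^nb\,E(a,a^{n+1}bx)=a^{(n+1)n/2}b^{n+1}E(a,a^{n+1}bx).
\end{align*}
This uses only the exponent identity $n(n-1)/2+n=(n+1)n/2$. As a check, one can also differentiate $n$ times directly: the $k$-th term becomes $a^{k(k-1)/2}b^kx^{k-n}/(k-n)!$, and with $k=j+n$ the exponent splits as $\binom{j+n}{2}=\binom{j}{2}+jn+\binom{n}{2}$, which recovers the same formula.

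No step is a genuine obstacle. The only points needing care are the convergence justification, namely entireness from $|a|\le1$, and the bookkeeping of the exponent of $a$ when reindexing the series.
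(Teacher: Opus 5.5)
Your proof is correct: the bound $|a|^{k(k-1)/2}/k!\le 1/k!$ makes $E(a,\cdot)$ entire, which justifies term-by-term differentiation. The case $n=1$ follows from $\frac{(j+1)j}{2}=\frac{j(j-1)}{2}+j$, and the induction closes via $\frac{n(n-1)}{2}+n=\frac{n(n+1)}{2}$. The paper gives no proof of this lemma and simply calls the identity well known; your coefficient computation is the standard verification it leaves implicit.
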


The next lemma deals with the zeros of the deformed exponential function. For $\theta\in(0,1)$, this result can be found in Theorem~6 and p.\ 320--332 of \cite{Morris}. For $\theta\in[-1,0)$, we refer to Appendix C of \cite{dyachenko}.

\begin{Lemma}\label{nullstellen_exponentialfunktion}
For $\theta\in(0,1)$, the zeros of the deformed exponential function $E(\theta,z)$ are simple, real, isolated and negative. For $\theta\in[-1,0]$, the zeros of the deformed exponential function $E(\theta,z)$ are real, the zero $z_0$ with the smallest absolute value is simple and negative and for all other zeros $z$, it holds $|\theta z|>|z_0|$.
\end{Lemma}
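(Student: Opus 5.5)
The statement is quoted from the literature (\cite{Morris} for $\theta\in(0,1)$, Appendix~C of \cite{dyachenko} for $\theta\in[-1,0)$), so my plan is to reconstruct a self-contained argument along classical lines. The only earlier input I need is the differential relation \eqref{eqn:dgl_n_exponential}, which for $n=1$ gives
\begin{align*}
\frac{\d}{\d z}E(\theta,z)=E(\theta,\theta z).
\end{align*}
First I record the basic analytic facts. For $|\theta|\le1$ the coefficients $\theta^{n(n-1)/2}/n!$ decay at least like $1/n!$, so $E(\theta,\cdot)$ is entire. It is nonconstant, so its zeros are isolated. For $|\theta|<1$ the coefficients decay superexponentially, so $E(\theta,\cdot)$ has order $0$ and, not being a polynomial, infinitely many zeros. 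Note also that $E(\theta,0)=1\neq0$.

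\emph{Case $\theta\in(0,1)$.} The key step is reality of the zeros. I would use Laguerre--P\'olya theory. The sequence $(\theta^{n(n-1)/2})_n$ is a classical multiplier sequence: $\theta^{n^2/2}$ is a Gaussian-type moment sequence, by Laguerre's theorem on $q^{n^2}$ with $0<q\le1$. Applying it termwise to $e^z$, which lies in the Laguerre--P\'olya class, shows that $E(\theta,\cdot)$ is in that class, so all its zeros are real. Since all coefficients are positive, $E(\theta,x)>0$ for $x\ge0$, hence every zero is negative.

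For simplicity I would argue by contradiction. Suppose a zero $z_*<0$ is multiple. Then $E(\theta,\theta z_*)=0$ by the differential relation, so $\theta z_*\in(z_*,0)$ is also a zero. Iterating on higher derivatives produces zeros $\theta^k z_*$ accumulating at $0$. This contradicts $E(\theta,0)=1$ together with isolatedness.

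\emph{Case $\theta\in[-1,0)$.} The endpoint $\theta=-1$ is explicit and serves as a sanity check:
\begin{align*}
E(-1,z)=\cos z+\sin z=\sqrt2\sin(z+\pi/4).
\end{align*}
Its zeros are $-\pi/4+k\pi$. The zero of smallest modulus is $z_0=-\pi/4$, it is simple and negative, and every other zero satisfies $|z|\ge 3\pi/4>\pi/4$.

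For $\theta=-q$ with $q\in(0,1)$, I would split $E$ into even and odd parts,
\begin{align*}
A(z)=\sum_k (-1)^k q^{k(2k-1)}\frac{z^{2k}}{(2k)!},\qquad B(z)=\sum_k (-1)^k q^{k(2k+1)}\frac{z^{2k+1}}{(2k+1)!},
\end{align*}
so that $E(-q,z)=A(z)+B(z)$. Setting $w=z^2$ turns $A$ and $B$ into series with positive multiplier-type coefficients $q^{k(2k\mp1)}$ applied to $\cos$ and $\sin$. The aim is to show that $A$ and $B$ have real, simple, interlacing zeros, and then to conclude reality of the zeros of $A+B$ by a Hermite--Biehler type argument.

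For the statements about the smallest zero I would again use the differential relation, now in the form $E'(z)=E(-qz)$. By Rolle's theorem, between consecutive real zeros of $E$ there is a zero $w$ of $E'$, and then $-qw$ is a zero of $E$. Starting from $z_0$, the real zero of smallest modulus, this lets me map the other zeros of $E$ into zeros of $E(\theta\cdot)$. Minimality of $|z_0|$ then forces $|\theta z|>|z_0|$ for every other zero $z$, and it also excludes a double zero at $z_0$, since a double zero would give a zero $\theta z_0$ with $|\theta z_0|<|z_0|$. Negativity of $z_0$ comes from checking $E(-q,x)>0$ on $[0,|z_0|]$ by comparing consecutive terms of the series.

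\emph{Main obstacle.} I expect the hardest step to be reality of the zeros for $\theta\in(-1,0)$, because the sign pattern $(-1)^{n(n-1)/2}$ is not a multiplier sequence. Making the Hermite--Biehler interlacing rigorous is delicate; if it fails, I would fall back on the argument of \cite{dyachenko}.
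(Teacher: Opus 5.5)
The paper does not prove this lemma. It quotes Morris et al.\ (Theorem~6) for $\theta\in(0,1)$ and Appendix~C of Dyachenko for $\theta\in[-1,0)$, so your self-contained route is different from the paper's, and parts of it are sound:
\begin{itemize}
\item For $\theta\in(0,1)$, writing $\theta^{n(n-1)/2}=(\sqrt\theta)^{n^2}(\theta^{-1/2})^n$ shows it is a multiplier sequence. By P\'olya--Schur, $E(\theta,\cdot)$ is then in the Laguerre--P\'olya class, so its zeros are real, and negative because the coefficients are positive.
\item The explicit case $\theta=-1$ is correct.
\item For $\theta<0$, \emph{once reality is known}, your Rolle argument works. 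For a zero $z_1\neq z_0$ there is $w$ strictly between them with $E(\theta,\theta w)=0$ and $|w|<\max(|z_0|,|z_1|)$. Hence $|z_0|\le|\theta w|<|\theta|\max(|z_0|,|z_1|)$, which forces both uniqueness of $z_0$ and $|\theta z_1|>|z_0|$.
\end{itemize}
There are, however, two genuine gaps.

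First, your simplicity argument for $\theta\in(0,1)$ fails. Suppose $z_*$ has multiplicity $m\ge 2$. Since $E'(z)=E(\theta,\theta z)$, the point $\theta z_*$ is a zero of multiplicity exactly $m-1$. Iterating, $\theta^k z_*$ has multiplicity exactly $m-k$, so $\theta^m z_*$ is not a zero. The chain stops after $m-1$ steps and nothing accumulates at $0$. Your argument does show that the zero of smallest modulus is simple, which is all the blue-region asymptotics use, but it does not prove the lemma as stated. What is missing is a counting argument:
\begin{itemize}
\item Since $E(\theta,\cdot)$ has order $0$ and $E(\theta,0)=1$, we have $E(\theta,z)=\prod_j(1-z/\zeta_j)^{m_j}$ with distinct zeros $0>\zeta_1>\zeta_2>\cdots$.
\item Then $E'/E=\sum_j m_j/(z-\zeta_j)$ is positive on $(\zeta_1,\infty)$ and decreases strictly from $+\infty$ to $-\infty$ on each gap $(\zeta_{j+1},\zeta_j)$. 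So $E'$ has exactly $m_1+\dots+m_k-1$ zeros, counted with multiplicity, in $[\zeta_k,0)$.
\item On the other hand, these zeros are exactly the points $\zeta/\theta$ with $\zeta$ a zero of $E$ in $[\theta\zeta_k,0)\subset(\zeta_k,0)$, with the same multiplicities. There are therefore at most $m_1+\dots+m_{k-1}$ of them.
\end{itemize}
Comparing the two counts gives $m_k\le 1$ for every $k$.

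Second, for $\theta\in(-1,0)$ you never establish that the zeros are real, and everything else in that case rests on it. Even granting that the even and odd parts $A$ and $B$ are real-rooted, passing to $A+B$ uses Hermite--Kakeya--Obreschkoff. (Hermite--Biehler concerns $A+iB$ and half-planes, so it is not the right tool.) That theorem requires $A$ and $B$ to have interlacing zeros. Interlacing is equivalent to real-rootedness of \emph{every} real combination $\alpha A+\beta B$, which strengthens what you want to prove, and you give no mechanism for it. In this case your proof therefore reduces to the same citation as the paper.

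Two smaller points:
\begin{itemize}
\item To conclude $z_0<0$ from positivity of $E(-q,\cdot)$ on $[0,|z_0|]$, you need an a priori bound on $|z_0|$. For example, grouping terms in pairs gives $E(-q,-1)\le -\tfrac q2+\tfrac{q^3}6+\tfrac{q^6}{24}<0$ and $E(-q,x)\ge 1+x-\tfrac{x^2}2-\tfrac{x^3}6>0$ on $[0,1]$, so $z_0\in(-1,0)$.
\item Your ``not a polynomial'' remark excludes $\theta=0$, where $E(0,z)=1+z$. That case is trivial but should be treated separately.
\end{itemize}
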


We will examine a number of power series and deduce the required asymptotics, so the following result is essential. The proof is standard and uses the power series representation of analytic functions as well as comparison of coefficients.
\begin{Lemma}\label{asymptotic_powerseries}
    Let $(p_n)$ be a sequence of real numbers. Let $z_0>0$. Assume that
\begin{equation}
\sum_{n=0}^\infty p_n z^n = \frac{f(z)}{z_0-z},
\end{equation}
for complex $|z| < z_0$. Further assume that $f$ is analytic on $|z|<z_1$ with $z_0<z_1$ and that $f(z_0)\neq 0$. Then
$$
p_n \sim f(z_0) \cdot z_0^{-(n+1)},\qquad \text{as $n\to\infty$.}
$$
\end{Lemma}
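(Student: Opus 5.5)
This is the standard transfer lemma (Darboux / singularity analysis for a simple pole). I would split the meromorphic function into its polar part and an analytic remainder and read off coefficients separately.

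First, since $f$ is analytic on $|z|<z_1$ and $z_0<z_1$, write
$$f(z)=f(z_0)+(z-z_0)g(z),$$
where $g(z)\coloneqq\frac{f(z)-f(z_0)}{z-z_0}$ for $z\neq z_0$ and $g(z_0)\coloneqq f'(z_0)$. The singularity of $g$ at $z_0$ is removable, so $g$ is analytic on $|z|<z_1$. Hence for $|z|<z_0$,
\begin{align*}
\sum_{n=0}^\infty p_n z^n=\frac{f(z_0)}{z_0-z}-g(z)=f(z_0)\sum_{n=0}^\infty z_0^{-(n+1)}z^n-\sum_{n=0}^\infty g_n z^n ,
\end{align*}
with $g(z)=\sum_n g_n z^n$ the Taylor expansion of $g$ at $0$, which converges on $|z|<z_1$. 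Both series on the right converge on $|z|<z_0$. By uniqueness of power series coefficients we get the exact identity
$$p_n=f(z_0)z_0^{-(n+1)}-g_n\qquad\text{for all } n.$$

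Next I bound the remainder. Fix $r$ with $z_0<r<z_1$. By the Cauchy estimates, $|g_n|\le \max_{|z|=r}|g(z)|\, r^{-n}$. Since $f(z_0)\neq0$,
$$\frac{|g_n|}{|f(z_0)|\,z_0^{-(n+1)}}\le C\,z_0\,(z_0/r)^n\to0,$$
which yields $p_n\sim f(z_0)z_0^{-(n+1)}$.

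No step is a real obstacle. The only points needing care are that $g$ extends analytically across $z_0$, and that the radius $r$ exceeds $z_0$ strictly so the remainder is exponentially smaller. The hypothesis $f(z_0)\neq0$ is used only to make the ratio meaningful; it is what makes the pole term dominant.
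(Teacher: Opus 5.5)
Your proof is correct and follows the route the paper indicates (the paper only sketches it as ``power series representation of analytic functions and comparison of coefficients''). Your version subtracts the polar part $f(z_0)/(z_0-z)$ and bounds the Taylor coefficients of the analytic remainder $g$ by Cauchy estimates on a circle of radius $r\in(z_0,z_1)$, which additionally gives the exponentially small error $p_n=f(z_0)z_0^{-(n+1)}+O(r^{-n})$; comparing coefficients directly in $(z_0-z)\sum_n p_nz^n=f(z)$ would instead give $z_0^{n+1}p_n=\sum_{k=0}^n f_kz_0^k\to f(z_0)$, with $f_k$ the Taylor coefficients of $f$.
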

\section{The blue region}\label{blue_region}
Recall that in the blue region, $\theta\in(0,1)$ and $a\geq 0$ or $\theta\in[-1,0)$ and   $a\in[-\theta,\frac{-1}{\theta}]$. 
A major role in this section will be played by the value $\lambda$, which is given by the inverse $\lambda=\frac{1}{z_0}$ of the smallest positive $z_0$ of the equation
\begin{align}\label{root_blue}
    E\left(\theta,\frac{-z}{a+1}\right)=0.
\end{align}
Note that $z_0$ is the complex root with smallest modulus, by Lemma~\ref{nullstellen_exponentialfunktion}. We will see that $\lambda$ corresponds to the persistence exponent.
With this notation, we can compute the generating function of $(p_n^{a,y}(\theta))$ explicitly.
\begin{Lemma}\label{generatingfunction_blaue_region}
   In the blue region, for $y\in[-a,1]$,we have $\hat{P}_{a,y,\theta}(z)=\frac{E\left(\theta,\frac{-\theta zy}{a+1}\right)}{E\left(\theta,\frac{-z}{a+1}\right)}$ with radius of convergence $\frac{1}{\lambda}$.
\end{Lemma}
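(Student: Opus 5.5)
The plan is to derive a closed integral recursion for $p_n^{a,y}(\theta)$, guess the generating function from the differential equation \eqref{eqn:dgl_n_exponential}, check the guess coefficientwise, and then identify the radius of convergence.

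\textbf{Step 1: the interval of integration stays inside $[-a,1]$.} In the blue region we have $\theta y\in[-a,1]$ for every $y\in[-a,1]$.
\begin{itemize}
\item For $\theta\in(0,1)$ and $a\ge0$: $\theta y\ge-\theta a\ge -a$ and $\theta y\le\theta<1$.
\item For $\theta\in[-1,0)$ and $a\in[-\theta,-\frac1\theta]$: $\theta y\in[\theta,-\theta a]\subset[-a,1]$, because $a\ge-\theta$ and $-\theta a\le1$.
\end{itemize}
Conditioning on $X_2$ and using the Markov property, as in the proof of Lemma~\ref{Prob=Lim_int_ef}, therefore gives the recursion without truncation of the integration interval:
\begin{align*}
p_n^{a,y}(\theta)=\frac{1}{a+1}\int_{\theta y}^1 p_{n-1}^{a,x}(\theta)\,\d x,\qquad n\ge1,\ y\in[-a,1].
\end{align*}
By induction, each $p_n^{a,y}(\theta)$ is a polynomial in $y$ of degree $n$. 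The recursion then makes sense for all real $y$, and it determines the sequence $(p_n^{a,\cdot})$ uniquely from $p_0\equiv1$.

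\textbf{Step 2: the candidate solves the recursion.} Summing over $n$, the generating function must satisfy the equation
\begin{align*}
G(z,y)=1+\frac{z}{a+1}\int_{\theta y}^1 G(z,x)\,\d x,
\end{align*}
read coefficientwise in $z$. Put $c:=\frac{-z}{a+1}$ and take the candidate $G(z,y)=E(\theta,c\theta y)/E(\theta,c)$. By \eqref{eqn:dgl_n_exponential} with $n=1$, $\frac{\d}{\d x}E(\theta,cx)=c\,E(\theta,\theta c x)$. Hence
\begin{align*}
\int_{\theta y}^1E(\theta,\theta c x)\,\d x=\frac{E(\theta,c)-E(\theta,c\theta y)}{c}.
\end{align*}
Plugging this in, the right-hand side of the equation becomes
\begin{align*}
1-\frac{E(\theta,c)-E(\theta,c\theta y)}{E(\theta,c)}=G(z,y).
\end{align*}
To make this rigorous, expand both sides as power series in $z$ near $0$, where $E(\theta,c)\ne0$. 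Comparing coefficients shows that the coefficients of $G$ satisfy the same recursion with the same initial value $1$. By uniqueness they equal $p_n^{a,y}(\theta)$, which proves the formula for $|z|$ small.

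\textbf{Step 3: the radius of convergence.} The numerator $E(\theta,\frac{-\theta zy}{a+1})$ is entire. By Lemma~\ref{nullstellen_exponentialfunktion}, the zero of $E(\theta,\frac{-z}{a+1})$ of smallest modulus is $z_0=\frac1\lambda>0$. So the quotient is analytic on $|z|<\frac1\lambda$, and the radius of convergence is at least $\frac1\lambda$.

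\textbf{Step 4 (main obstacle): excluding cancellation.} I must show the radius is not larger, i.e.\ that the pole at $z_0$ is not cancelled by a zero of the numerator for some $y$.
\begin{itemize}
\item First take $y=0$. This point lies in $[-a,1]$ because $a\ge0$ or $a\ge-\theta>0$. There the numerator is $E(\theta,0)=1$, so $\hat P_{a,0,\theta}$ has a genuine pole at $\frac1\lambda$ and radius exactly $\frac1\lambda$.
\item Next, transfer this to general $y$ by a comparison of coefficients. I would show there are $k\in\N$ and $\varepsilon>0$ such that $p_{n+k}^{a,y}(\theta)\ge\varepsilon\,p_n^{a,0}(\theta)$ for all $n$; then $\limsup_n (p_n^{a,y})^{1/n}\ge\lambda$.
\item To get the comparison, apply the recursion twice. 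The set $[\theta y,1]$ has positive length since $\theta y<1$ (away from the boundary case $a=-\frac1\theta$, $y=-a$, which is excluded from the state space). So it contains points $x$ with $\theta x\le 0$, i.e.\ $x\ge 0$ if $\theta>0$ and $x\le0$ if $\theta<0$, on a set of positive length. For such $x$ we have $[\theta x,1]\supseteq[0,1]$, and hence $p_{n+1}^{a,x}\ge\frac{1}{a+1}\int_0^1 p_n^{a,u}\,\d u=p_{n+1}^{a,0}$.
\item This gives $p_{n+2}^{a,y}\ge\varepsilon\, p_{n+1}^{a,0}$ with $\varepsilon=\frac{1}{a+1}\cdot\lambda_{\mathrm{Leb}}\bigl([\theta y,1]\cap\{x:\theta x\le0\}\bigr)>0$, where $\lambda_{\mathrm{Leb}}$ denotes Lebesgue measure.
\end{itemize}
I expect this last step, the positivity of the comparison constant uniformly enough in $y$ together with the boundary case, to be the only delicate point. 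The rest is the routine verification above.
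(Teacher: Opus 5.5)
Your Steps 1--3 follow the paper's route. The paper differentiates the same recursion to get $\partial_y\hat P_{a,y,\theta}(z)=\frac{-\theta z}{a+1}\hat P_{a,\theta y,\theta}(z)$ with $\hat P_{a,1/\theta,\theta}(z)=1$, which is equivalent to your integral equation. Your coefficientwise uniqueness argument actually justifies the paper's ``unique solution'' claim more cleanly.

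The gap is in Step~4. In the third bullet you translate $\theta x\le 0$ into ``$x\ge 0$ if $\theta>0$, $x\le 0$ if $\theta<0$''. Both are reversed: with that reading $\theta x\ge 0$, so $[\theta x,1]\subseteq[0,1]$, and the inequality $p_{n+1}^{a,x}(\theta)\ge p_{n+1}^{a,0}(\theta)$ goes the wrong way.
\begin{itemize}
\item For $\theta<0$ this is only a slip. The correct set $\{x\in[\theta y,1]:x\ge 0\}=[\max(\theta y,0),1]$ has positive length whenever $\theta y<1$, and your comparison then works.
\item For $\theta\in(0,1)$ the correct set is $[\theta y,1]\cap(-\infty,0]$. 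This is fine for $y<0$, but it is empty for every $y\in(0,1]$, since then $\theta y>0$. So $\varepsilon=0$ there.
\end{itemize}
Iterating the recursion does not rescue this. Under the constraints, $X_{k+1}\ge\theta^k y>0$ for all $k$, so a chain started at $y>0$ never reaches $\{\theta x\le 0\}$. Consequently, for $\theta\in(0,1)$ and $y\in(0,1]$ you have not ruled out that the numerator vanishes at $z_0=1/\lambda$ and cancels the pole. The claim that the radius equals $1/\lambda$ is therefore unproved on that part of the blue region.

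The paper closes this point analytically; the details are in the proof of Lemma~\ref{asymptotic_blau_erste}. Let $w_0:=-z_0/(a+1)<0$ be the zero of $E(\theta,\cdot)$ of smallest modulus. Then the numerator at $z_0$ is $E(\theta,\theta y w_0)$.
\begin{itemize}
\item For $\theta\in(0,1)$ and $y\le 0$: $\theta y w_0\ge 0$, which is not a zero because all zeros are negative.
\item For $\theta\in(0,1)$ and $y\in(0,1]$: $\theta y w_0\in(w_0,0)$, which is excluded by minimality of $|w_0|$.
\item For $\theta\in[-1,0)$: $|\theta y|\le 1$, and by Lemma~\ref{nullstellen_exponentialfunktion} every other zero has modulus $>|w_0|/|\theta|\ge |w_0|$. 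So a zero can only occur when $\theta y=1$, i.e.\ at the excluded point $a=-1/\theta$, $y=-a$.
\end{itemize}
If you prefer to keep a monotonicity argument, you need a different propagation mechanism for $\theta>0$, $y>0$:
\begin{itemize}
\item For real $z\uparrow z_0$, the map $x\mapsto\hat P_{a,x,\theta}(z)$ is nonincreasing. Combined with the integral equation at $y=0$, this forces $\hat P_{a,\delta,\theta}(z)\to\infty$ for every $\delta<\min\{1,(a+1)/z_0\}$.
\item The integral equation then propagates blow-up from all $x<\beta$ to all $y<\beta/\theta$. Finitely many steps cover $(0,1]$.
\item Nonnegativity of the coefficients then gives radius at most $z_0$.
\end{itemize}
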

    \begin{proof}
        From a recursion of the probabilities $(p_n^{a,y}(\theta))$, we will deduce a differential equation for the generating function which has the unique solution $\frac{E\left(\theta,\frac{-\theta zy}{a+1}\right)}{E\left(\theta,\frac{-z}{a+1}\right)}$. \\
        By the choice of the parameters in the blue region, we get that $y\in[-a,1]$ implies $\theta y\in[-a,1]$. Consider $y$ such that $\theta y\in[-a,1]$. For all $n\geq 1$, it holds
        \begin{align*}
            p_n^{a,y}(\theta)&=\frac{1}{(a+1)^n}\int_{\theta y}^1\int_{\theta x_2}^1\dots\int_{\theta x_n}^1 \d x_{n+1}\dots \d x_3\d x_2\\&=\frac{1}{a+1}\int_{\theta y}^1\left[\frac{1}{(a+1)^{n-1}}\int_{\theta x_2}^1\dots\int_{\theta x_n}^1 \d x_{n+1}\dots \d x_3\right]\d x_2=\frac{1}{a+1}\int_{\theta y}^1p_{n-1}^{a,x_2}(\theta)\d x_2.
        \end{align*}
        Multiplying with $z^n$, summing over $n$ and  taking the derivative with respect to $y$ gives 
    \begin{align*}
        \frac{\partial}{\partial y}\hat{P}_{a,y,\theta}(z)\notag
        &=\frac{\partial}{\partial y}\sum_{n=1}^{\infty} p_n^{a,y}(\theta)\cdot z^n
        = \frac{1}{a+1}~ \frac{\partial}{\partial y}\int_{\theta y}^1\left(\sum_{n=1}^{\infty} p_{n-1}^{a,x_2}(\theta)\cdot z^n\right) \d x_2\notag
     \\
     & =\frac{z}{a+1}~ \frac{\partial}{\partial y}\int_{\theta y}^1\hat{P}_{a,x_2,\theta}(z) \d x_2 \notag
      =\frac{-\theta z}{a+1}\hat{P}_{a,\theta y,\theta}(z),
    \end{align*}
    where we used that $p_0^{a,y}(\theta) z^0=1$ is constant in the first step and the fact that the integral is bounded and the series is absolutely convergent within the disk of convergence so that we can exchange the sum and the integral.
    In conclusion, we found a differential equation for the function $\hat{P}_{a,y,\theta}(z)$
    with the additional constraint $\hat{P}_{a,\frac{1}{\theta},\theta}(z)=1$ (coming from the definition of the generating function and $p_0^{a,y}(\theta)=1$ and $p_n^{a,\frac{1}{\theta}}(\theta)=0$ for $n\ge 1$).
    The unique solution to this differential equation with the boundary condition is $y\mapsto\frac{E\left(\theta,\frac{-\theta zy}{a+1}\right)}{E\left(\theta,\frac{-z}{a+1}\right)}$, as can be checked by taking derivatives (applying \eqref{eqn:dgl_n_exponential}) and checking the additional constraint. Since $\frac{1}{\lambda}$ is the unique zero with smallest modulus of the denominator and the numerator has no zero in $\frac{1}{\lambda}$ and no singularities, the radius of convergence is given by $\frac{1}{\lambda}$.
 \end{proof}    
With the concrete formula for $\hat{P}_{a,y,\theta}(z)$, we infer the following asymptotics from Lemma~\ref{asymptotic_powerseries}:
\begin{Lemma}\label{asymptotic_blau_erste}
In the blue region, as $n\to\infty$, it holds for $y\in[-a,1]$ that
    \begin{align*}
       p_n^{a,y}(\theta)\sim \frac{a+1}{E\left(\theta,\frac{-\theta }{\lambda(a+1)}\right)}\, \lambda^{n+1}E\left(\theta,\frac{-\theta y}{\lambda(a+1)}\right),
    \end{align*}
    where $\lambda=\lambda(\theta,a)$ is given as the smallest real positive root equation of \eqref{root_blue}.
\end{Lemma}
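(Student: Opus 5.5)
The plan is to apply Lemma~\ref{asymptotic_powerseries} to the explicit generating function from Lemma~\ref{generatingfunction_blaue_region}. Write $z_0=\frac{1}{\lambda}$ for the smallest positive zero of $g(z):=E\left(\theta,\frac{-z}{a+1}\right)$. Since $z_0$ is a simple zero of $g$ by Lemma~\ref{nullstellen_exponentialfunktion}, we can factor $g(z)=(z_0-z)\,q(z)$ with $q$ entire and $q(z_0)=-g'(z_0)\neq0$. This gives
$$\hat{P}_{a,y,\theta}(z)=\frac{f_y(z)}{z_0-z},\qquad f_y(z):=\frac{E\left(\theta,\frac{-\theta zy}{a+1}\right)}{q(z)}.$$

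Next we check the hypotheses of Lemma~\ref{asymptotic_powerseries}.

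\textbf{Analyticity of $f_y$.} The numerator is entire, since the series defining $E(\theta,\cdot)$ converges everywhere for $|\theta|\le1$ ($\theta=-1$ included, as the factorial dominates). The zeros of $q$ are the remaining zeros of $g$. For $\theta\in(0,1)$ these are isolated and have modulus strictly larger than $z_0$, because the zeros are simple and real. For $\theta\in[-1,0)$ Lemma~\ref{nullstellen_exponentialfunktion} gives $|\theta z|>|z_0|$ for every other zero $z$, so $|z|>z_0$. Either way $f_y$ is analytic on some disk $|z|<z_1$ with $z_1>z_0$.

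\textbf{Nonvanishing of $f_y(z_0)$.} We need $E\left(\theta,\frac{-\theta z_0 y}{a+1}\right)\neq0$ for $y\in[-a,1]$. This is the step I expect to need the most care.

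For $\theta>0$ and $y\le 0$, the argument is nonnegative. All coefficients of $E(\theta,\cdot)$ are positive, so $E>0$ there. For $\theta>0$ and $y\in(0,1]$, the argument lies in $[-\theta z_0/(a+1),0)$, whose modulus is strictly smaller than $z_0/(a+1)$. Since $-z_0/(a+1)$ is the zero of $E(\theta,\cdot)$ of smallest modulus, $E$ does not vanish there.

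For $\theta<0$ we use that in the blue region $\theta y\in[-a,1]$. Then the argument $-\theta y z_0/(a+1)$ has modulus at most $\max(a,1)\,z_0/(a+1)$. If $y<0$ the argument is negative with modulus $|\theta y|z_0/(a+1)\le z_0/(a+1)$. If $y\ge0$ the argument is positive, and again $E$ has no positive real zeros. The boundary case $y=1/\theta$ is excluded here because $y\ge -a\ge 1/\theta$. The remaining borderline is $a=-1/\theta$ with $y=-a$, where the argument equals exactly $-z_0/(a+1)$ and $E$ vanishes. This is the point the paper's remark removes from the state space, so the nondegenerate statement holds for $y$ away from that point; there both sides are $0$ and the asymptotic is read trivially. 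Strictly, $|\theta y|<1$ is what we use.

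\textbf{Evaluating the constant.} Lemma~\ref{asymptotic_powerseries} now gives $p_n^{a,y}(\theta)\sim f_y(z_0)\lambda^{n+1}$, so it remains to compute $q(z_0)$. By \eqref{eqn:dgl_n_exponential} with $n=1$ and $b=-1/(a+1)$,
$$g'(z)=-\frac{1}{a+1}E\left(\theta,\frac{-\theta z}{a+1}\right),$$
so that
$$q(z_0)=-g'(z_0)=\frac{1}{a+1}E\left(\theta,\frac{-\theta}{\lambda(a+1)}\right).$$
This quantity is nonzero by the same smallest-modulus argument, since $|\theta|<1$. Hence
$$f_y(z_0)=\frac{(a+1)\,E\left(\theta,\frac{-\theta y}{\lambda(a+1)}\right)}{E\left(\theta,\frac{-\theta}{\lambda(a+1)}\right)},$$
which is exactly the claimed asymptotic.
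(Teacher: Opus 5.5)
Your argument follows the paper's proof: you factor the simple zero $z_0=1/\lambda$ out of $E(\theta,\frac{-z}{a+1})$, apply Lemma~\ref{asymptotic_powerseries}, compute $-g'(z_0)$ via \eqref{eqn:dgl_n_exponential}, and check that the numerator does not vanish at $z_0$. You are also more careful than the paper at the borderline $a=-\frac{1}{\theta}$, $y=-a=\frac{1}{\theta}$. There $\theta y z_0=z_0$ and both sides vanish for $n\ge1$. The paper's ``by choice of $a$'' silently excludes this point, which is why the remark on the state space removes it.

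Two of your justifications in the case $\theta<0$ are wrong as stated, though the conclusions hold.

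\begin{itemize}
\item It is false that $E(\theta,\cdot)$ has no positive real zeros when $\theta<0$. The coefficients $\theta^{n(n-1)/2}/n!$ then have sign pattern $+,+,-,-,\dots$. For example, $E(-1,w)=\cos w+\sin w$ vanishes at $w=3\pi/4$.
\item ``Since $|\theta|<1$'' does not cover $\theta=-1$, which belongs to the blue region (with $a=1$).
\end{itemize}

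The correct argument in both places is the one you only half-state at the end. By the second part of Lemma~\ref{nullstellen_exponentialfunktion}, every zero of $E(\theta,\cdot)$ other than $w_0=-\frac{z_0}{a+1}$ has modulus $>|w_0|/|\theta|\ge|w_0|$. Moreover, $|\theta y|\le 1$ for all $y\in[-a,1]$, because $|\theta|\le 1$ and $|\theta|a\le 1$. Your bound $\max(a,1)$ is useless when $a>1$, and strict inequality $|\theta y|<1$ fails, e.g.\ for $\theta=-1$, $y=1$.

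Hence $\frac{-\theta y z_0}{a+1}$ has modulus at most $|w_0|$, and it can vanish only if it equals $w_0$, i.e.\ only if $\theta y=1$. That is exactly the borderline point. For $y\ge 0$ it is excluded by sign. The constant $E(\theta,\frac{-\theta z_0}{a+1})$ is just the case $y=1$, so it is nonzero for $\theta=-1$ too.

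This is precisely the paper's ``$|\theta y z_0|\le z_0$ and $\theta y z_0\neq z_0$''. With this repair your proof is complete.
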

    \begin{proof} 
   We want to use Lemma~\ref{asymptotic_powerseries}, so we need to check that there is a function $f(z)$ with
    \begin{align}\label{asymptotic_blue_proof}
        \sum_{n=0}^{\infty}p_n^{a,y}(\theta)z^n=\frac{f(z)}{z_0-z}~~~~\text{for all}~|z|<z_0=\frac{1}{\lambda}
    \end{align}
   which is analytic on $|z|<z_1$ with some $z_1>\frac{1}{\lambda}$ and $f\left(z_0\right)\neq 0$.
   \\Recall that  $\sum_{n=0}^{\infty}p_n^{a,y}(\theta)z^n=\frac{E\left(\theta,\frac{-\theta zy}{a+1}\right)}{E\left(\theta,\frac{-z}{a+1}\right)}$. In order to find $f$ such that \eqref{asymptotic_blue_proof} holds, note that $z_0$ is the zero with smallest modulus of the denominator $g(z)$ of the generating function. We also know from Lemma~\ref{nullstellen_exponentialfunktion} that $z_0$ is a simple zero of $g$ and that the zero with the next-largest modulus has modulus $z_1>z_0$; so that $g(z)/(z_0-z)$ is analytic on $|z|<z_1$. Therefore $$u(z)\coloneqq E\left(\theta,\frac{-\theta zy}{a+1}\right)~~\text{ and }~~v(z)\coloneqq g(z)\frac{1}{z_0-z},$$ are analytic on $|z|<z_1$. We rewrite $\sum_{n=0}^{\infty}p_n^{a,y}(\theta)z^n=\frac{u(z)}{v(z)}\frac{1}{z_0-z}$. In order to show that the function $f(z)\coloneqq\frac{u(z)}{v(z)}$ satisfies the assumptions of Lemma~\ref{asymptotic_powerseries} it remains to be seen that $f(z_0)\neq 0$.
   Let us first compute
   \begin{align*}
   v(z_0)=-g'(z_0)=-E\left(\theta,\frac{-\theta z_0}{a+1}\right)\frac{-1}{a+1} = \frac{E\left(\theta,\frac{-\theta z_0}{a+1}\right)}{a+1},
   \end{align*}
   which is non-zero: Indeed, if $\theta>0$, we have $0<\theta z_0<z_0$ and  $z_0$ is the smallest positive zero of $z\mapsto E\left(\theta,\frac{-z}{a+1}\right)$. If $\theta<0$, we have $|\theta z_0|\le z_0$ and $\theta z_0\neq z_0$, so $\theta z_0$ is no zero of $z\mapsto E\left(\theta,\frac{-z}{a+1}\right)$.
   In particular, this gives
   \begin{align*}
       f(z_0)=\frac{a+1}{E\left(\theta,\frac{-\theta z_0}{a+1}\right)}E\left(\theta,\frac{-\theta yz_0}{a+1}\right)\neq 0,
   \end{align*}
   where $E\left(\theta,\frac{-\theta yz_0}{a+1}\right)\neq 0$: If $\theta>0$, $\theta y z_0$ is negative for $y<0$ and thus cannot be a root. If $y\in[0,1]$, we have $0<\theta y z_0<z_0$. For $\theta<0$, it holds $|\theta y z_0|\le z_0$ with $\theta y z_0\neq z_0$ by choice of $a$.
   An application of Lemma~\ref{asymptotic_powerseries} yields the claim.
    \end{proof}
Now we can deduce the statement of Theorem~\ref{maintheorem} for the blue region from Lemma~\ref{Prob=Lim_int_trans_kernel}. Note that Theorem~\ref{maintheorem_2} then follows directly by Lemma \ref{Prob=Lim_int_ef} and Lemma~\ref{asymptotic_blau_erste}.
\begin{proof}[Proof of Theorem~\ref{maintheorem} (B)]
For $y\in[-a,1]$, set $c(\theta,a)\coloneqq \frac{a+1}{E\left(\theta,\frac{-\theta }{\lambda(a+1)}\right)}$. By Lemma~\ref{asymptotic_blau_erste}, we have $p_n^{a,y}(\theta)\sim c(\theta,a)h(y)\lambda^{n+1}$ as $n\to\infty$. Lemma~\ref{Prob=Lim_int_trans_kernel} then yields the claim and the formula for $h(y)$ for $y\in[-a,1]$. The claim about the formula for $h(y)$ for the extended range $\theta y \in [-a,1]$ follows from the eigenvalue equation. 
    \end{proof}
\section{The green region}
The green region is characterized by  $\theta\in[-1,0)$ and $a\geq \frac{-1}{\theta}$. 
 In this setting, we follow the approach of \cite{AurzadaRaschel} and reduce the results in the green region to the results in the blue region. This allows us to directly apply the formula for the transition kernel of the limiting process in the blue region to obtain the transition kernel in the green region.  Nevertheless, we will still derive the generating function of $(p_n^{a,y}(\theta))$ in the green region since its explicit form will be needed later in the yellow region.
 \\In this region, denote by $\tilde{\lambda}=\lambda\left(\theta,-\frac{1}{\theta}\right)$ the persistence exponent from the blue region given as the solution of \eqref{root_blue} for the parameters $\theta$ and $\tilde a:=-1/\theta$.
\begin{Lemma}\label{generatingfunction_grune_region}
    In the green region, we have
\begin{align*}
 \hat{P}_{a,y,\theta}(z)=
    \begin{cases}
            1,~~&~-a\leq y<\frac{1}{\theta},
            \\\frac{E\left(\theta,\frac{-\theta zy}{a+1}\right)}{E\left(\theta,\frac{-z}{a+1}\right)},~~&~\frac{1}{\theta}\le y \leq 1,
        \end{cases}      
\end{align*}
    with radius of convergence $\frac{1}{\lambda}$ for $y>\frac{1}{\theta}$ and $\infty$ for $y\le\frac{1}{\theta}$, where $\lambda$ is the inverse of the root $z_0>0$ with the smallest absolute value of the equation $E\left(\theta,\frac{-z}{a+1}\right)=0$.
    \end{Lemma}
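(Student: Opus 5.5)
The plan follows the proof of Lemma~\ref{generatingfunction_blaue_region}: derive a recursion for $p_n^{a,y}(\theta)$, turn it into a functional-differential equation in $y$, and solve it with a boundary condition.

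\textbf{Step 1: trivial starting points.} Let $\theta\in[-1,0)$ and $a\ge -\frac1\theta$, so that $\frac1\theta\in[-a,1)$ and $\theta y\in[\theta,-a\theta]$ for $y\in[-a,1]$.
\begin{itemize}
\item If $y<\frac1\theta$, then $\theta y>1$, so the first constraint $X_2\ge\theta y$ fails almost surely. Hence $p_n^{a,y}(\theta)=0$ for $n\ge1$, and the generating function is $\hat P_{a,y,\theta}(z)=p_0^{a,y}(\theta)=1$, an entire function.
\item At $y=\frac1\theta$ the constraint is $X_2\ge 1$, a null event. This gives the same value $1$, consistent with the formula for $y\ge\frac1\theta$ at that point.
\end{itemize}

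\textbf{Step 2: recursion for $y\in[\frac1\theta,1]$.} Here $\theta y\in[\theta,1]\subset[-a,1]$, since $\theta\ge-1\ge -a$ when $a\ge1$, and $a\ge-\frac1\theta\ge1$. Conditioning on $X_2=x_2$ gives
$$p_n^{a,y}(\theta)=\frac{1}{a+1}\int_{\theta y}^1 p_{n-1}^{a,x_2}(\theta)\,\d x_2 .$$
Every $x_2$ in the range of integration satisfies $x_2\ge\theta y\ge\theta$, and $\theta\ge\frac1\theta$ because $\theta\ge-1$. So $x_2$ stays in $[\frac1\theta,1]$, and the recursion closes on this interval. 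Multiplying by $z^n$, summing, and differentiating in $y$ as in the blue region yields
$$\partial_y\hat P_{a,y,\theta}(z)=-\frac{\theta z}{a+1}\hat P_{a,\theta y,\theta}(z), \qquad \hat P_{a,\frac1\theta,\theta}(z)=1 .$$

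\textbf{Step 3: solution.} Using \eqref{eqn:dgl_n_exponential}, the function $y\mapsto E(\theta,\frac{-\theta zy}{a+1})/E(\theta,\frac{-z}{a+1})$ satisfies both the equation and the boundary condition, so it is the claimed generating function.

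\textbf{Step 4: uniqueness.} This is the point the blue-region proof glossed over. I would justify it at the level of coefficients: the recursion in Step 2 determines each $p_n^{a,\cdot}$ from $p_{n-1}^{a,\cdot}$ on $[\frac1\theta,1]$. The coefficients of the explicit solution satisfy the same integral recursion, because the differential equation together with the boundary value at $\frac1\theta$ integrates back to it, and they have the same $n=0$ term. So the coefficients coincide, and no appeal to ODE uniqueness is needed.

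\textbf{Step 5: radius of convergence.} For $y\le\frac1\theta$ the series is the constant $1$, so the radius is $\infty$. For $y>\frac1\theta$, Lemma~\ref{nullstellen_exponentialfunktion} says the denominator's zero of smallest modulus is $z_0=\frac1\lambda>0$, and it is simple. It remains to check that the numerator does not vanish at $z_0$. We have $|\theta y z_0|\le z_0$, so $\frac{-\theta yz_0}{a+1}$ could only be a zero if $\theta y z_0=z_0$, i.e. $\theta y=1$, i.e. $y=\frac1\theta$, which is excluded. Any other zero of $z\mapsto E(\theta,\frac{-z}{a+1})$ has modulus larger than $\frac{z_0}{|\theta|}\ge z_0$ and so cannot occur either. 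The numerator is entire, so $z_0$ is a genuine pole and the radius of convergence is exactly $\frac1\lambda$.

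I expect the main obstacle to be keeping the recursion closed on $[\frac1\theta,1]$ while $y$ varies down to $\frac1\theta$ (Step 2), together with the uniqueness argument (Step 4).
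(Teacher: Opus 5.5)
Your proof is correct, but it takes a different route from the paper. The paper never redoes the blue-region computation for the green parameters. It notes that on $\Omega_n$ the variables $X_2,\dots,X_{n+1}$ automatically lie in $[\frac{1}{\theta},1]$ and conditions on that event. This gives the scaling identity \eqref{reduction_green_blue_prob}, $p_n^{a,y}(\theta)=\nu^n p_n^{-1/\theta,y}(\theta)$ with $\nu=\frac{1-1/\theta}{a+1}$, hence $\hat P_{a,y,\theta}(z)=\hat P_{-1/\theta,y,\theta}(\nu z)$. Formula and radius (with $\lambda=\tilde\lambda\nu$) are then read off from Lemma~\ref{generatingfunction_blaue_region} at the boundary parameter $\tilde a=-\frac{1}{\theta}$. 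You exploit the same closure phenomenon ($x_2\ge\theta y\ge\theta\ge\frac{1}{\theta}$) analytically, keeping the original $a$, and then verify the candidate directly.

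What your route buys: it is self-contained, and Step~4 closes a gap that the paper's proof inherits from the blue lemma, where uniqueness for the functional ODE is only asserted. That uniqueness is not automatic on $[\frac{1}{\theta},1]$: for $z$ with $E(\theta,\frac{-z}{a+1})=0$, the homogeneous problem has the nontrivial solution $y\mapsto E(\theta,\frac{-\theta zy}{a+1})$. You also check explicitly that the numerator does not vanish at $z_0$. Given Step~4, the differential equation for $\hat P$ in Step~2 is actually superfluous; only the integral recursion for $p_n^{a,y}(\theta)$ and the integrated identity for the candidate are used.

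What the paper's route buys: it is shorter and yields the identity \eqref{reduction_green_blue_prob} itself. That identity is reused immediately for Corollary~\ref{pers_exp_green}, and the same conditioning transfers the Doob kernel from the blue to the green region. Your argument would need that identity separately for those consequences.
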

    \begin{proof}
    For $y\le\frac{1}{\theta}$, we use $X_2\sim U[-a,1]$ and get for $n\geq 1$:
     \begin{align*}
        p_n^{a,y}(\theta)\leq\P(X_2\ge1 ,X_3\geq\theta X_2,\dots,X_{n+1}\geq\theta X_n)=0.
    \end{align*}
    Therefore, we have $\hat{P}_{a,y,\theta}(z)=p_0^{a,y}(\theta)z^0=1$ for $y\in\left[-a,\frac{1}{\theta}\right]$.
    \\The case $y>\frac{1}{\theta}$ is left. Here, we reduce the case to the shared borderline with the blue region, which is $\theta\in[-1,0)$ and $\tilde a=\frac{-1}{\theta}$. To do so, we condition the occurring random variables to be $\geq\frac{1}{\theta}$.
    First, set $A\coloneqq \{X_i\geq\frac{1}{\theta}~\forall~2\leq i\leq n+1\}$. The conditions $\theta X_i\leq X_{i+1}\leq 1$ for all $2\leq i\leq n$ and $X_{n+1}\geq\theta X_n\geq \theta\geq\frac{1}{\theta}$ imply $X_i\geq\frac{1}{\theta}$ for all $2\leq i \leq n+1$, so we see that
    \begin{align*}
        \P(\{X_2\geq\theta y,X_3\geq\theta X_2,\dots,X_{n+1}\geq\theta X_n\}\cap A^c)=0.
    \end{align*}
    Now we use $\tilde{X}_i \sim U\left[\frac{1}{\theta},1\right]$ and $\nu\coloneqq\frac{1-\frac{1}{\theta}}{a+1}$ to write
    \begin{align}\label{reduction_green_blue_prob}
        p_n^{a,y}(\theta)&=\P(X_2\geq\theta y,X_3\geq\theta X_2,\dots,X_{n+1}\geq\theta X_n~|~A)\ \P(A)\notag
        \\&=\P(\tilde{X}_2\geq\theta y,\tilde{X}_3\geq\theta \tilde{X}_2,\dots,\tilde{X}_{n+1}\geq\theta \tilde{X}_n)\ \P\left(X_1\geq\frac{1}{\theta}\right)^n\notag
        \\&=\P(\tilde{X}_2\geq\theta y,\tilde{X}_3\geq\theta \tilde{X}_2,\dots,\tilde{X}_{n+1}\geq\theta \tilde{X}_n)\left(\frac{1-\frac{1}{\theta}}{a+1}\right)^n\notag
        \\&= p_n^{\frac{-1}{\theta},y}(\theta)\nu^n
    \end{align}
    and thus
    \begin{align*}
     \hat{P}_{a,y,\theta}(z)&\notag
     = \sum_{n=0}^{\infty}p_n^{\frac{-1}{\theta},y}(\theta)\left(\nu z\right)^n
     =\hat{P}_{\frac{-1}{\theta},y,\theta}\left(\nu z\right).
    \end{align*}
    Note that every starting point $\frac{1}{\theta}< y\leq 1$ from the green region is also valid in the blue region for $\theta\in[-1,0)$ and $\tilde a=\frac{-1}{\theta}$, so we already computed $\hat{P}_{\frac{-1}{\theta},y,\theta}(\cdot)$ in Lemma~\ref{generatingfunction_blaue_region} for the blue region. Plugging this in, we obtain the claim.
    \end{proof}
Let us record one immediate consequence of the last proof (i.e.\ from \eqref{reduction_green_blue_prob}).

\begin{Corollary}\label{pers_exp_green}
  In the green region, the persistence exponent is given by $\lambda=\tilde\lambda\frac{1-\frac{1}{\theta}}{a+1}$.
\end{Corollary}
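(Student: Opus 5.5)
The identity \eqref{reduction_green_blue_prob} does most of the work: for $y>\frac{1}{\theta}$ it gives $p_n^{a,y}(\theta)=p_n^{-1/\theta,y}(\theta)\,\nu^n$ with $\nu=\frac{1-\frac1\theta}{a+1}$, and the plan is to combine this with the blue-region asymptotics. The parameters $(\theta,\tilde a)$ with $\tilde a=-\frac1\theta$ lie on the boundary of the blue region, since $\tilde a=-\frac1\theta\in[-\theta,-\frac1\theta]$ for $\theta\in[-1,0)$. Hence Lemma~\ref{asymptotic_blau_erste} applies to every starting point $y\in(\frac1\theta,1]$ and yields
\begin{align*}
p_n^{-1/\theta,y}(\theta)\sim \tilde c\,\tilde\lambda^{n+1}E\Big(\theta,\frac{-\theta y}{\tilde\lambda(\tilde a+1)}\Big),
\end{align*}
where $\tilde c>0$ does not depend on $y$ and $E\big(\theta,\frac{-\theta y}{\tilde\lambda(\tilde a+1)}\big)\neq0$.

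Multiplying by $\nu^n$ gives $p_n^{a,y}(\theta)\sim \tilde c\,\nu^{-1}(\tilde\lambda\nu)^{n+1}E(\dots)$. Using $\log\lambda=\lim_n\frac1n\log p_n^a(\theta)$, I then integrate over $y$, or equivalently note that $p_n^a(\theta)=\frac{1}{a+1}\int_{-a}^1p_n^{a,y}(\theta)\,\d y$. The integrand vanishes for $y<\frac1\theta$ and $n\ge1$. On $(\frac1\theta,1]$ it is dominated via Lemma~\ref{P_x_estimate} by $p_n^{a,1}(\theta)$, which is of order $(\tilde\lambda\nu)^n$ and positive. This gives the lower bound, and dominated convergence gives the upper bound, so $\frac1n\log p_n^a(\theta)\to\log(\tilde\lambda\nu)$. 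Hence $\lambda=\tilde\lambda\nu$.

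As a consistency check, I would also verify this via Lemma~\ref{generatingfunction_grune_region}. There the denominator of the generating function is $E(\theta,\frac{-z}{a+1})=E(\theta,\frac{-\nu z}{\tilde a+1})$, because $\frac{\nu}{\tilde a+1}=\frac{1-1/\theta}{(a+1)(1-1/\theta)}=\frac1{a+1}$. Its smallest zero is therefore $z_0=\frac{1}{\nu\tilde\lambda}$, which matches $\lambda=\tilde\lambda\nu$.

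The only delicate point is that the exponent is defined through the unconditioned $p_n^a$ rather than through a fixed starting point, so the interchange of limit and integral needs the uniform domination from Lemma~\ref{P_x_estimate} together with positivity on a set of positive measure. I expect no further obstacle.
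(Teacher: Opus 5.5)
Your argument is correct and rests on the same key input as the paper, which treats the corollary as an immediate consequence of the factorization \eqref{reduction_green_blue_prob}. Integrating that identity over $y$ gives the exact relation $p_n^a(\theta)=\nu^{n+1}p_n^{-1/\theta}(\theta)$, so $\lambda=\nu\tilde\lambda$ transfers directly from the blue-case exponent, and your limit interchange is not needed. One small slip: the domination by $p_n^{a,1}(\theta)$ gives the upper bound on $p_n^a(\theta)$, and the strictly positive pointwise limit (via dominated convergence or Fatou) gives the lower bound, not the other way round.
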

    
Recall that for $y\le\frac{1}{\theta}$, we have 
     \begin{align*}
        \P_y(\Omega_n)=p_n^{a,y}(\theta)\leq\P(X_2>1 ,X_3\geq\theta X_2,\dots,X_{n+1}\geq\theta X_n)=0
    \end{align*}
    for $n\geq 1$. Therefore, when conditioning on the event $\Omega_n$, we only consider starting points $y$ fulfilling $y>\frac{1}{\theta}$.

\begin{Lemma}[Theorem~\ref{maintheorem} (G)]
    For $y>\frac{1}{\theta}$, the limiting process $M'$ exists and is a Markov chain on $S$ with transition kernel
    \begin{align*}
    p'(x_1,\d x_2)&\coloneqq\mathds{1}_S(x_2,x_1)\frac{h(x_2)}{h(x_{1})}\frac{1}{\lambda}\frac{1}{a+1}\,\d x_2,~~~~~~~~x_1,x_2\in \bigg[\frac{1}{\theta},1\bigg],
\end{align*}
where $\lambda$ is the persistence exponent of $(p_n^{a,y}(\theta))$ and 
\begin{align*}
         h(x)=\begin{cases}
           E\left(\theta,\frac{-\theta x}{\lambda (a+1)}\right),&\frac{1}{\theta}\leq \theta x< 1,
            \\E\left(\theta,\frac{-1}{ \lambda \theta(a+1)}\right),&~ \theta x< \frac{1}{\theta},
        \end{cases}
    \end{align*}
where $\lambda:=\tilde \lambda \, \frac{1-\frac{1}{\theta}}{a+1}$ and $\tilde\lambda$ as defined in \eqref{root_blue} for $a=\frac{-1}{\theta}$.
\end{Lemma}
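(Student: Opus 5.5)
Starting from $y>\frac{1}{\theta}$, the plan is to derive the exact asymptotics of $p_n^{a,y}(\theta)$ and then apply Lemma~\ref{Prob=Lim_int_trans_kernel}. That lemma is stated on $[-a,1]$, but its proof uses only the Markov property, dominated convergence (Lemma~\ref{P_x_estimate}) and the asymptotics, so it carries over verbatim to the effective state space $(\frac{1}{\theta},1]$. Points $x\le\frac{1}{\theta}$ have $p_n^{a,x}(\theta)=0$ for $n\ge1$. They carry no mass under the conditioned law, because after the first step the chain satisfies $X_i\ge\theta\ge\frac1\theta$. The only adaptation needed is therefore to restrict all integrals to $\{x_i>\frac1\theta\}$, which changes nothing, since the integrand vanishes elsewhere for large $n$.

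\textbf{Step 1 (asymptotics).} By \eqref{reduction_green_blue_prob}, $p_n^{a,y}(\theta)=\nu^n p_n^{\tilde a,y}(\theta)$ with $\tilde a=-\frac1\theta$ and $\nu=\frac{1-1/\theta}{a+1}=\frac{\tilde a+1}{a+1}$. The point $(\theta,\tilde a)$ lies on the boundary of the blue region, and $y\in(\frac1\theta,1]\subset[-\tilde a,1]$. Lemma~\ref{asymptotic_blau_erste} therefore gives
\[
p_n^{\tilde a,y}(\theta)\sim \frac{\tilde a+1}{E\left(\theta,\frac{-\theta}{\tilde\lambda(\tilde a+1)}\right)}\,\tilde\lambda^{n+1}E\left(\theta,\frac{-\theta y}{\tilde\lambda(\tilde a+1)}\right).
\]
I will set $\lambda=\tilde\lambda\nu$ as in Corollary~\ref{pers_exp_green}. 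Then $\tilde\lambda(\tilde a+1)=\lambda(a+1)$, and hence
\[
p_n^{a,y}(\theta)\sim c\,\lambda^{n+1}E\left(\theta,\frac{-\theta y}{\lambda(a+1)}\right),\qquad c=\frac{\tilde a+1}{\nu\,E\left(\theta,\frac{-\theta}{\lambda(a+1)}\right)}.
\]
This is exactly of the form $c\,h(y)\lambda^{n+1}$ with the $h$ of the statement, and $c$ does not depend on $y$.

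\textbf{Step 2 (positivity and continuity of $h$).} I need $h(y)>0$ for $y\in(\frac1\theta,1]$; this is the step requiring care. The argument $\frac{-\theta y}{\lambda(a+1)}=\frac{-\theta y}{\tilde\lambda(\tilde a+1)}$ is $-\tilde z_0\theta y/(\tilde a+1)$, where $\tilde z_0=1/\tilde\lambda$. Here $\theta y\in[\theta,1)$, so $|\theta y\tilde z_0|\le \tilde z_0$ with equality excluded. By Lemma~\ref{nullstellen_exponentialfunktion}, $\tilde z_0$ is the zero of smallest modulus of $z\mapsto E(\theta,-z/(\tilde a+1))$, and all other zeros have larger modulus. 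Hence $E$ has no zero on the segment in question. Since $E(\theta,0)=1>0$, continuity along the segment gives positivity. This is the same argument as in the proof of Lemma~\ref{asymptotic_blau_erste}, but the endpoint $y=\frac1\theta$ must be excluded: there $\theta y=1$, which gives the zero itself, and this is consistent with the remark that $\frac1\theta$ is not in the state space. Continuity of $h$ is clear.

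\textbf{Step 3 (conclusion and extension of $h$).} With Steps 1–2, the adapted Lemma~\ref{Prob=Lim_int_trans_kernel} yields existence of $M'$ and the kernel $p'$. Lemma~\ref{Prob=Lim_int_ef} (restricted likewise) gives the eigenvalue equation $\lambda h(x)=\int_{\{y>\theta x\}\cap[1/\theta,1]}h(y)\frac{1}{a+1}\d y$. I will use it to extend $h$ to all $x\in\R$. For $\frac1\theta\le\theta x<1$, the right-hand side equals the blue-region formula; one can check this by differentiating in $x$ via \eqref{eqn:dgl_n_exponential} and noting that both sides vanish at $\theta x=1$. For $\theta x<\frac1\theta$, the integration domain is all of $[\frac1\theta,1]$, so $h$ is constant and by continuity equals its value at $\theta x=\frac1\theta$, namely $E\left(\theta,\frac{-1}{\lambda\theta(a+1)}\right)$. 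Finally, that $\lambda$ is the largest eigenvalue follows because $\lambda$ is the persistence exponent, given by Corollary~\ref{pers_exp_green} and the asymptotics.
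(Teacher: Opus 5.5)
Your proof is correct, but it takes a somewhat different route from the paper's. The paper performs the green-to-blue reduction at the level of the conditioned laws. Since $\P(\Omega_n\cap A^c)=0$ for $A=\{X_i\ge \frac1\theta,\ 2\le i\le n+1\}$, it writes $\P_y((M_k)_{k\le m}\in\cdot\mid\Omega_n)=\P_y((\tilde M_k)_{k\le m}\in\cdot\mid\tilde\Omega_n)$ for a chain driven by i.i.d.\ $U[\frac1\theta,1]$ innovations. It then invokes Theorem~\ref{maintheorem} (B) with $\tilde a=-\frac1\theta$ as a black box, and only has to convert the constant $\frac{1}{\tilde\lambda(1-1/\theta)}=\frac{1}{\lambda(a+1)}$ via Corollary~\ref{pers_exp_green}.

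You instead reduce only the persistence probabilities via \eqref{reduction_green_blue_prob}. This gives the green-region asymptotics $p_n^{a,y}(\theta)\sim \frac{a+1}{h(1)}\,h(y)\lambda^{n+1}$ explicitly, and you then re-run Lemma~\ref{Prob=Lim_int_trans_kernel} on the effective state space $(\frac1\theta,1]$.

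The paper's route is shorter and needs no re-verification of the Doob hypotheses. Yours requires adapting that lemma, which you justify correctly. In fact the indicators $\1_S(x_i,x_{i-1})$ with $x_{i-1}\le 1$ already force $x_i>\frac1\theta$ for $i\ge 1$, and the majorant comes from $\P_x(\Omega_n)\le\P_1(\Omega_n)$. In exchange, your route makes explicit two things the paper leaves implicit:
\begin{itemize}
\item the strict positivity of $h$ on $(\frac1\theta,1]$, including $c=\frac{a+1}{h(1)}>0$. At $\tilde a=-\frac1\theta$ the blue eigenfunction vanishes at $\frac1\theta$, which the paper only handles by a remark excluding that point;
\item the verification of the eigenvalue equation that produces the stated formula for $h$ outside the state space.
\end{itemize}

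One small imprecision: when $\theta=-1$ and $y=1$ you have $|\theta y\tilde z_0|=\tilde z_0$, so ``equality excluded'' is not literally true. Your next sentence still covers this case, since the only zero of modulus at most $\tilde z_0$ is $\tilde z_0$ itself, and it does not lie on the segment from $0$ to $\theta y\tilde z_0$.
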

\begin{proof}
   Analogously to the proof of Lemma~\ref{generatingfunction_grune_region}, set $A\coloneqq \{X_i\geq\frac{1}{\theta}~\forall~2\leq i\leq n+1\}$ and recall that the conditions $\theta X_i\leq X_{i+1}\leq 1$ for all $2\leq i\leq n$ and $X_{n+1}\geq\theta X_n\geq \theta\geq\frac{1}{\theta}$ imply $X_i\geq\frac{1}{\theta}$ for all $2\leq i \leq n$. In particular, $\P(\Omega_n\cap A^c)=0$.
    Moreover, define $\tilde{M}_i\coloneqq(\tilde X_{i+1},\tilde X_i)$ for i.i.d. $\tilde{X}_i \sim U\left[\frac{1}{\theta},1\right]$ for $i\geq 2$, $\tilde X_1:=y$, and $\tilde\Omega_n\coloneqq\{\tilde M_1\in S,\dots, \tilde M_n\in S\}$, then we have
    \begin{align*}
        &\P_{y}((M_k)_{1\leq k\leq m}\in\cdot~|\Omega_n)
        =\P_{y}((M_k)_{1\leq k\leq m}\in \cdot~|\Omega_n\cap A)
        =\P_{y}((\tilde M_k)_{1\leq k\leq m}\in\cdot~|\tilde\Omega_n).
    \end{align*}
    Since $\tilde{X}_i \sim U\left[\frac{1}{\theta},1\right]$ and $\frac{1}{\theta}< y\leq 1$ is valid for the blue region, Theorem~\ref{maintheorem} (B) provides that the limiting process $M'$ is a Markov chain with transition kernel
    \begin{align*}
    p'(x_1,\d x_2)&\coloneqq \mathds{1}_S(x_2,x_1)\frac{h(x_2)}{h(x_{1})}\frac{1}{\tilde\lambda}\frac{1}{1-\frac{1}{\theta}}\,\d x_2,~~~~~~~~x_1,x_2\in \left[\frac{1}{\theta},1\right],
\end{align*}
where $\tilde\lambda$ and $h$ are the eigenvalue and eigenfunction from the blue region in the special case $\tilde a=\frac{-1}{\theta}$, respectively given in \eqref{root_blue} and by 
 \begin{align*}
        h(x)=\begin{cases}
            0,~~&~\theta x \ge1,
            \\E\left(\theta,\frac{\theta^2 x}{\tilde\lambda (1-\theta)}\right),&\frac{1}{\theta}\leq \theta x< 1,
            \\E\left(\theta,\frac{1}{\tilde\lambda (1-\theta)}\right),&~ \theta x< \frac{1}{\theta}.
        \end{cases}
 \end{align*}
  Finally, Corollary \ref{pers_exp_green} completes the claim.
\end{proof}
\section{The yellow region}
The yellow region is given by $\theta\in[-1,0)$ and $a\in(0,-\theta)$. Following \cite{AurzadaRaschel}, we treat this region by using a relation between the generating functions of the green and yellow region. 
To simplify the calculation, we state two helpful lemmas before giving an explicit formula for the generating function in Lemma~\ref{generatingfunction_gelbe_region}. First, we require some definitions and initial facts.
\begin{definition}\label{def_q_n_and_Q}
    For any $\theta\in \R$ and $a>-1$, we define  $q_0^a(\theta)\coloneqq1$, $q_0^{a,y}(\theta)\coloneqq1$ and, for $n\geq 1$,
    $$
    q_n^a(\theta)\coloneqq\P(\theta X_1\geq  X_2,\dots,\theta X_n\geq X_{n+1}),\qquad q_n^{a,y}(\theta)\coloneqq\P(\theta y\geq  X_2,\dots,\theta X_n\geq X_{n+1}).
    $$ 
    The respective generating functions are denoted by
    $$\hat{Q}_{a,\theta}(z)\coloneqq\sum_{n=0}^{\infty} q_n^{a}(\theta)z^n ~~\text{ and }~~\hat{Q}_{a,y,\theta}(z)\coloneqq\sum_{n=0}^{\infty} q_n^{a,y}(\theta)z^n.$$
\end{definition}
\begin{Lemma}
For $\theta <0$ and $a>0$, we have, with the abbreviation $\tilde{y}\coloneqq\frac{-y}{a}$,
  \begin{align}\label{cor1_aurzada,raschel}
      q_n^a(\theta)=p_n^{1/a}(\theta)\qquad\text{and}\qquad q_n^{a,y}(\theta)=p_n^{\frac{1}{a},\tilde{y}}(\theta).
  \end{align}
\end{Lemma}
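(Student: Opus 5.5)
The plan is to prove both identities with one affine change of variables that maps $U[-a,1]$ to $U[-1/a,1]$. Set $Y_i\coloneqq -X_i/a$ for $i\ge 1$. Since $a>0$, the map $x\mapsto -x/a$ is an affine bijection of $[-a,1]$ onto $[-1/a,1]$. The endpoint $-a$ goes to $1$ and the endpoint $1$ goes to $-1/a$. Hence $(Y_i)$ is again i.i.d., now uniformly distributed on $[-1/a,1]$, which is exactly the innovation law attached to the parameter $1/a$.

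Next I will rewrite each constraint. Multiplying $\theta X_i\ge X_{i+1}$ by $-1/a<0$ reverses the inequality and gives $\theta Y_i\le Y_{i+1}$, that is, $Y_{i+1}\ge\theta Y_i$. So the event $\{\theta X_1\ge X_2,\dots,\theta X_n\ge X_{n+1}\}$ coincides with $\{Y_2\ge\theta Y_1,\dots,Y_{n+1}\ge\theta Y_n\}$. Taking probabilities and using that $(Y_i)$ has the law of the innovations for parameter $1/a$ gives $q_n^a(\theta)=p_n^{1/a}(\theta)$ for $n\ge1$. The case $n=0$ holds by the convention $q_0=p_0=1$.

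For the version with a fixed starting point, I will apply the same transformation to $y$. The first constraint $\theta y\ge X_2$ becomes $Y_2\ge\theta(-y/a)=\theta\tilde y$. The remaining constraints transform exactly as before. Therefore
\begin{align*}
q_n^{a,y}(\theta)=\P(Y_2\ge\theta\tilde y,Y_3\ge\theta Y_2,\dots,Y_{n+1}\ge\theta Y_n)=p_n^{\frac1a,\tilde y}(\theta).
\end{align*}
Here I use the definition of $p_n^{a,y}$ as a probability for an i.i.d. uniform sequence started at the deterministic value $y$; nothing requires $y$ or $\tilde y$ to lie in the support, since $p_n^{a,y}$ is defined for every $y\in\R$.

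I expect no real obstacle. The only points to check are that $a>0$ is what makes the scaling orientation-reversing and sends $[-a,1]$ onto an interval of the form $[-1/a,1]$, and that the sign of $\theta$ plays no role in the algebra. The hypothesis $\theta<0$ matters only for how the lemma is used later, not for its validity. Whether the inequalities are strict or non-strict is immaterial, because ties occur with probability zero.
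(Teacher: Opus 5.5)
Your proof is correct and takes essentially the paper's route. The paper only cites Corollary~1 of Aurzada and Raschel for the first identity and says the second follows ``by the same method''; your rescaling $Y_i=-X_i/a$ is that method written out. It maps $U[-a,1]$ onto $U[-1/a,1]$, turns $\theta X_i\ge X_{i+1}$ into $Y_{i+1}\ge\theta Y_i$, and turns $\theta y\ge X_2$ into $Y_2\ge\theta\tilde y$. Your side remarks are also right: only $a>0$ is used, and $\theta<0$ plays no role in the identity itself.
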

\begin{proof}
    The first statement is shown in Corollary 1 of \cite{AurzadaRaschel}. The second statement is obtained by the same method.
\end{proof}
The next lemma relates the generating function of the $(p_n^a(\theta))$ with those of the $(q_n^a(\theta))$ and $q_n^{a,y}(\theta)$.
\begin{Lemma}\label{zusammenhang_genfct_yellow_p_und_q}
  For $\theta\neq 0$ and $a>-1$, we have on the common domain of convergence    \begin{align}\label{eq_zusammenhang_genfct_yellow_p_und_q}
        \hat{P}_{a,y,\theta}(z)=\frac{\hat{Q}_{a,y,\theta}(-z)}{1-z\cdot\hat{Q}_{a,\theta}(-z)}.
    \end{align}
    \end{Lemma}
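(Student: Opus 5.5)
We prove \eqref{eq_zusammenhang_genfct_yellow_p_und_q} by a first-failure decomposition. Write it as
$$\hat{P}_{a,y,\theta}(z)=\hat{Q}_{a,y,\theta}(-z)+z\,\hat{P}_{a,y,\theta}(z)\hat{Q}_{a,\theta}(-z).$$
The identity behind this is inclusion--exclusion on the descent pattern. For a fixed path $y=X_1,X_2,\dots,X_{n+1}$ and $i=1,\dots,n$, let $A_i:=\{X_{i+1}\ge\theta X_i\}$. Then $A_i^c=\{\theta X_i> X_{i+1}\}$, which agrees with the events defining $q_n$ up to null sets.

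\emph{Step 1: coefficient identity.} The claim is
$$\sum_{k=0}^{n}(-1)^{k}\,\P_y(A_1^c\cap\dots\cap A_k^c\cap A_{k+2}\cap\dots\cap A_n)\,[\ldots]=\dots,$$
and it is cleaner to derive it directly from the recursion
$$p_n^{a,y}(\theta)=(-1)^n q_n^{a,y}(\theta)+\sum_{k=0}^{n-1}(-1)^{n-1-k}\,p_k^{a,y}(\theta)\,q_{n-1-k}^{a}(\theta).$$
Proof of the recursion: expand $\1_{A_{k+1}\cap\dots\cap A_n}$, with $k$ the length of the initial run of successes $A_1,\dots,A_k$, using the inclusion--exclusion identity for the last block,
$$\1_{A_{k+2}\cap\dots\cap A_{n}}\text{-type terms: }\prod_{i=k+1}^n \1_{A_i}=\prod_{i=k+1}^n(1-\1_{A_i^c}).$$
Concretely, the standard identity is
$$\1_{A_1\cap\dots\cap A_n}=\prod_{i}\big(1-\1_{A_i^c}\big).$$
Grouping by the first index $j=k+1$ of the product of complements that starts a maximal contiguous block ending at $n$ is messy. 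Instead use the telescoping
$$\1_{A_1\cdots A_n}=\sum_{k=0}^{n-1}(-1)^{n-1-k}\1_{A_1\cdots A_k}\1_{A_{k+2}^c\cdots A_n^c}\;+\;(-1)^n\1_{A_1^c\cdots A_n^c}.$$
This comes from
$$\1_{A_1\cdots A_k}\1_{A_{k+1}^c\cdots A_n^c}=\1_{A_1\cdots A_k}\1_{A_{k+2}^c\cdots A_n^c}-\1_{A_1\cdots A_{k+1}}\1_{A_{k+2}^c\cdots A_n^c},$$
which holds because $\1_{A_{k+1}^c}=1-\1_{A_{k+1}}$. Summing over $k$ with alternating signs telescopes, and this is the one point to verify carefully by induction on $n$.

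\emph{Step 2: factorization.} Take expectations. The event $\{A_1\cdots A_k\}$ depends only on $X_1,\dots,X_{k+1}$, while $\{A_{k+2}^c\cdots A_n^c\}$ depends only on $X_{k+2},\dots,X_{n+1}$. The variable $X_{k+1}$ is not shared, because $A_{k+1}$ was removed. So by independence of the i.i.d.\ innovations the expectation factors as $p_k^{a,y}(\theta)\,q_{n-1-k}^{a}(\theta)$. The second factor is unconditional because $X_{k+2}\sim U[-a,1]$ is a fresh starting variable. The last term is $q_n^{a,y}(\theta)$.

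\emph{Step 3: generating functions.} Multiply by $z^n$ and sum. The convolution gives
$$\hat P_{a,y,\theta}(z)=\hat{Q}_{a,y,\theta}(-z)+z\,\hat P_{a,y,\theta}(z)\,\hat{Q}_{a,\theta}(-z),$$
using $(-1)^{n-1-k}z^n=z\cdot z^k(-z)^{n-1-k}$. Solving this for $\hat P_{a,y,\theta}$ gives the claim on the common disc of convergence, where $1-z\hat Q_{a,\theta}(-z)\neq0$ follows from the identity itself wherever $\hat P_{a,y,\theta}$ is finite.

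The main obstacle is bookkeeping the signs and the boundary conventions $q_0=p_0=1$ in the telescoping identity. The rest is absolute convergence for $|z|$ small, since all terms are bounded by $1$, followed by analytic continuation to the common domain.
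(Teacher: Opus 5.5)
Your proof is correct and is essentially the paper's. Your indicator telescoping, based on $\1_{A_{k+1}^c}=1-\1_{A_{k+1}}$, is exactly the paper's repeated splitting with $B_i=A_i^c$ up to null sets. It yields the same recursion $p_n^{a,y}(\theta)=\sum_{k=1}^n(-1)^{k-1}p_{n-k}^{a,y}(\theta)q_{k-1}^a(\theta)+(-1)^nq_n^{a,y}(\theta)$ (yours after $k\mapsto n-k$), the same independence factorization, and the same convolution of generating functions.

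One small correction: nonvanishing of $1-z\hat Q_{a,\theta}(-z)$ does not follow from the identity for a single fixed $y$, since a zero there would only force $\hat Q_{a,y,\theta}(-z)=0$. It does follow after integrating the identity in $y$ against $\frac{\d y}{a+1}$, which gives $\hat P_{a,\theta}(z)\bigl(1-z\hat Q_{a,\theta}(-z)\bigr)=\hat Q_{a,\theta}(-z)$ and hence the contradiction $1=0$ at any such zero. The paper simply divides without addressing this.
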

    \begin{proof}
        The idea of the proof follows the idea of Lemma 2 and Lemma 3 of \cite{AurzadaRaschel}. We use the notation from Definition~\ref{def_q_n_and_Q} and set $A_1\coloneqq\{\theta y\leq X_2\}$, $A_i\coloneqq\{\theta X_i\leq X_{i+1}\}$ for $2\le i\le n$ and $B_1\coloneqq\{\theta y\geq X_2\}$, $B_i\coloneqq\{\theta X_i\geq X_{i+1}\}$ for $2\leq i \leq n$. Note that due to the continuous distribution of the $X_i$, the set $B_i$ is the complement of the set $A_i$ up to a null set. Applying this to the set $A_n$, we get for $n\geq 1$
        \begin{align*}
            p_n^{a,y}(\theta)
            &=\P(A_1\cap\dots\cap A_n)
           =\P(A_1\cap\dots\cap A_{n-1})-\P(A_1\cap\dots\cap A_{n-1}\cap B_n)
            \\&=p_{n-1}^{a,y}(\theta)-\P(A_1\cap\dots\cap A_{n-1}\cap B_n).
        \end{align*}
        Using the fact that $B_{n-1}$ is the complement of $A_{n-1}$ up to a null set, we  split the probability $\P(A_1\cap\dots\cap A_{n-1}\cap B_n)$ and then use that $B_n$ is independent of  $A_1\cap\dots \cap A_{n-2}$:
        \begin{align*}
            \P(A_1\cap\dots\cap A_{n-1}\cap B_n)
            &=\P(A_1\cap\dots\cap A_{n-2}\cap B_n)-\P(A_1\cap\dots\cap A_{n-2}\cap B_{n-1}\cap B_n)
            \\&=\P(A_1\cap\dots\cap A_{n-2})\P( B_n)-\P(A_1\cap\dots\cap A_{n-2}\cap B_{n-1}\cap B_n)
            \\&=p_{n-2}^{a,y}(\theta)q_1^a(\theta)-\P(A_1\cap\dots\cap A_{n-2}\cap B_{n-1}\cap B_n).
        \end{align*}
        We therefore have
        \begin{align*}
            p_n^{a,y}(\theta)=p_{n-1}^{a,y}(\theta)-p_{n-2}^{a,y}(\theta)q_1^a(\theta)+\P(A_1\cap\dots\cap A_{n-2}\cap B_{n-1}\cap B_n).
        \end{align*}
        We proceed this way of transforming the terms of the form $\P(A_1\cap\dots\cap A_i\cap B_{i+1}\cap\dots\cap B_n)$. 
        In particular, for $n\geq 1$, we get the recursion
        \begin{align*}
            p_n^{a,y}(\theta)=\left(\sum_{k=1}^n(-1)^{k-1}p_{n-k}^{a,y}(\theta)q_{k-1}^a(\theta)\right)+(-1)^nq_n^{a,y}(\theta).
        \end{align*}
        Multiplying by $z^n$ and taking the sum over $n$, this becomes
        \begin{align*}
            \hat{P}_{a,y,\theta}(z)-1
            &=\sum_{n=1}^{\infty}p_n^{a,y}(\theta)z^n
            =\sum_{n=1}^{\infty}\sum_{k=1}^n(-1)^{k-1}p_{n-k}^{a,y}(\theta)q_{k-1}^a(\theta)z^n+\sum_{n=1}^{\infty}(-1)^nq_n^{a,y}(\theta)z^n
            \\&=\sum_{n=1}^{\infty}\sum_{k=1}^np_{n-k}^{a,y}(\theta)z^{n-k}zq_{k-1}^a(\theta)(-1)^{k-1}z^{k-1}+\sum_{n=1}^{\infty}(-1)^nq_n^{a,y}(\theta)z^n
            \\&=\sum_{k=1}^{\infty}\sum_{n=k}^{\infty}p_{n-k}^{a,y}(\theta)z^{n-k}zq_{k-1}^a(\theta)(-z)^{k-1}+\hat{Q}_{a,y,\theta}(-z)-1
            \\&=\sum_{k=1}^{\infty}zq_{k-1}^a(\theta)(-z)^{k-1}\sum_{n=k}^{\infty}p_{n-k}^{a,y}(\theta)z^{n-k}+\hat{Q}_{a,y,\theta}(-z)-1
            \\&=\hat{P}_{a,y,\theta}(z)\sum_{k=1}^{\infty}zq_{k-1}^a(\theta)(-z)^{k-1}+\hat{Q}_{a,y,\theta}(-z)-1
            \\&=\hat{P}_{a,y,\theta}(z)z\hat{Q}_{a,\theta}(-z)+\hat{Q}_{a,y,\theta}(-z)-1.
        \end{align*}
        Rearranging and dividing by  $1-z\hat{Q}_{a,\theta}(-z)$ gives the claim. 
    \end{proof}
From the last two lemmas, we infer the following relation. Note that in contrast to Lemma~\ref{zusammenhang_genfct_yellow_p_und_q}, the following result only holds for $\theta <0$ and $a>0$.
\begin{Corollary}
For $\theta <0$ and $a>0$, we have with the abbreviation $\tilde{y}\coloneqq\frac{-y}{a}$ on the common domain of convergence       \begin{align}\label{generatingfunction_yellow_and_green_rel}
        \hat{P}_{a,y,\theta}(z)=\frac{\hat{P}_{\frac{1}{a},\tilde{y},\theta}(-z)}{1-z\cdot\hat{P}_{\frac{1}{a},\theta}(-z)}.
    \end{align}
\end{Corollary}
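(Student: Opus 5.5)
The plan is to substitute the identities \eqref{cor1_aurzada,raschel} coefficientwise into the two power series on the right-hand side of \eqref{eq_zusammenhang_genfct_yellow_p_und_q}. Lemma~\ref{zusammenhang_genfct_yellow_p_und_q} holds for every $\theta\neq 0$ and $a>-1$, so in particular for $\theta<0$ and $a>0$. There it gives
\begin{align*}
\hat{P}_{a,y,\theta}(z)=\frac{\hat{Q}_{a,y,\theta}(-z)}{1-z\cdot\hat{Q}_{a,\theta}(-z)}
\end{align*}
on the common domain of convergence. Under the same assumptions $\theta<0$, $a>0$, the preceding lemma yields, for every $n\ge 1$,
\begin{align*}
q_n^a(\theta)=p_n^{1/a}(\theta)\qquad\text{and}\qquad q_n^{a,y}(\theta)=p_n^{\frac{1}{a},\tilde y}(\theta),\qquad \tilde y=\frac{-y}{a}.
\end{align*}

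First I check the case $n=0$. By Definition~\ref{def_q_n_and_Q} we have $q_0^a(\theta)=q_0^{a,y}(\theta)=1$, and by the conventions in the introduction $p_0^{1/a}(\theta)=p_0^{1/a,\tilde y}(\theta)=1$. So the identities hold for all $n\ge 0$.

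Next I multiply by $(-z)^n$ and sum over $n$. This gives $\hat{Q}_{a,\theta}(-z)=\hat{P}_{\frac1a,\theta}(-z)$ and $\hat{Q}_{a,y,\theta}(-z)=\hat{P}_{\frac1a,\tilde y,\theta}(-z)$ as formal power series. Since the coefficient sequences coincide, the radii of convergence coincide as well, so the identities hold as analytic functions on the respective discs. Substituting them into the displayed formula yields \eqref{generatingfunction_yellow_and_green_rel}.

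The only point needing care is the phrase ``common domain of convergence''. Because the coefficients are probabilities, hence bounded by $1$, all the series converge for $|z|<1$. The equality therefore holds at least on the set of $|z|<1$ where the denominator $1-z\hat{P}_{\frac1a,\theta}(-z)$ does not vanish, and it extends to any larger common domain of analyticity by the same termwise substitution. I expect no real obstacle: the argument is a direct combination of the two preceding lemmas.
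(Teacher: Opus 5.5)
Your proposal is correct and follows the paper's route: substitute the coefficient identities $q_n^a(\theta)=p_n^{1/a}(\theta)$ and $q_n^{a,y}(\theta)=p_n^{1/a,\tilde y}(\theta)$ into the representation of Lemma~\ref{zusammenhang_genfct_yellow_p_und_q}. Your checks of the $n=0$ terms and of the domain of convergence make explicit what the paper's one-line proof leaves implicit.
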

\begin{proof}
    The statement follows directly by using \eqref{cor1_aurzada,raschel}
    to rewrite the terms on the right-hand side of \eqref{eq_zusammenhang_genfct_yellow_p_und_q}.
\end{proof}
Now we combine this with Lemma~\ref{generatingfunction_grune_region} to find an explicit representation of the generating function. 
\begin{Lemma}\label{generatingfunction_gelbe_region}
    In the yellow region, on the common domain of convergence, we have
    \begin{align*}
        \hat{P}_{a,y,\theta}(z)
        &=\begin{cases}
           \frac{E\left(\theta,\frac{-\theta zy}{a+1}\right)}{E\left(\theta,\frac{za}{\theta(a+1)}\right)-z\cdot\frac{ \theta +a}{\theta(a+1)}E\left(\theta,\frac{za}{a+1}\right)},~~&~-a\leq y \leq \frac{-a}{\theta},
            \\\frac{E\left(\theta,\frac{za}{a+1}\right)}{E\left(\theta,\frac{za}{\theta(a+1)}\right)-z\cdot\frac{\theta +a}{\theta(a+1)}E\left(\theta,\frac{za}{a+1}\right)},~~&~\frac{-a}{\theta}<y\leq1.
        \end{cases}    
    \end{align*}
\end{Lemma}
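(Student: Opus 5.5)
The plan is to apply the relation \eqref{generatingfunction_yellow_and_green_rel} with the yellow parameters $\theta\in[-1,0)$, $a\in(0,-\theta)$, and then evaluate both generating functions on its right-hand side by Lemma~\ref{generatingfunction_grune_region}. Since $a<-\theta\le 1$, we have $\frac{1}{a}>\frac{1}{-\theta}=-\frac{1}{\theta}$, so the pair $(\theta,\frac1a)$ lies in the green region. Hence both $\hat P_{\frac1a,\tilde y,\theta}$ and $\hat P_{\frac1a,\theta}$ are known there. The rescaled starting point is $\tilde y=-y/a\in[-\frac1a,1]$ for $y\in[-a,1]$, which is exactly the state space for parameter $\frac1a$. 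Also, $\tilde y\le\frac1\theta$ is equivalent to $y\ge -\frac a\theta$. This gives the case split in the statement.

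\textbf{Step 1: the numerator.} Lemma~\ref{generatingfunction_grune_region} with $a$ replaced by $\frac1a$ (so $\frac1a+1=\frac{a+1}{a}$) and $z$ replaced by $-z$ gives the following. For $\tilde y>\frac1\theta$, i.e. $y<-\frac a\theta$,
\[
\hat P_{\frac1a,\tilde y,\theta}(-z)=\frac{E\left(\theta,\frac{\theta z\tilde y a}{a+1}\right)}{E\left(\theta,\frac{za}{a+1}\right)}=\frac{E\left(\theta,\frac{-\theta zy}{a+1}\right)}{E\left(\theta,\frac{za}{a+1}\right)}.
\]
For $\tilde y\le\frac1\theta$ the numerator equals $1$. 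At the boundary $y=-\frac a\theta$ the two formulas agree, since $E(\theta,\frac{za}{a+1})$ appears on top and bottom, so the boundary may be assigned to either case.

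\textbf{Step 2: the denominator.} I need $\hat P_{\frac1a,\theta}(-z)=\frac{1}{\frac1a+1}\int_{-1/a}^1\hat P_{\frac1a,x,\theta}(-z)\,\d x$, which follows by integrating out $X_1$. I split the integral at $x=\frac1\theta$. The part over $[-\frac1a,\frac1\theta]$ contributes $\frac{a}{a+1}\left(\frac1\theta+\frac1a\right)=\frac{\theta+a}{\theta(a+1)}$. For the part over $[\frac1\theta,1]$, I integrate $E\left(\theta,\frac{\theta zxa}{a+1}\right)$ termwise (equivalently, use \eqref{eqn:dgl_n_exponential} with $n=1$ backwards). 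The identity $\int_0^w E(\theta,\theta s)\,\d s=E(\theta,w)-1$ turns this into an expression of the form
\[
\frac{1}{-z}\left[E\left(\theta,\tfrac{za}{a+1}\right)-E\left(\theta,\tfrac{za}{\theta(a+1)}\right)\right]\Big/ E\left(\theta,\tfrac{za}{a+1}\right).
\]
Forming $1-z\hat P_{\frac1a,\theta}(-z)$, the $E(\theta,\frac{za}{a+1})$ terms cancel. This leaves
\[
\frac{E\left(\theta,\frac{za}{\theta(a+1)}\right)-z\frac{\theta+a}{\theta(a+1)}E\left(\theta,\frac{za}{a+1}\right)}{E\left(\theta,\frac{za}{a+1}\right)}.
\]

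\textbf{Step 3: combine.} Dividing the numerator from Step~1 by this denominator, the factor $E(\theta,\frac{za}{a+1})$ cancels in the first case. In the second case it remains on top. This gives exactly the two formulas claimed.

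The main obstacle is the bookkeeping in Step~2. I expect the substitution to be error-prone: signs from $-z$, the factor $\frac{a}{a+1}$ from the uniform density on $[-\frac1a,1]$, and the endpoint $x=\frac1\theta$ giving argument $\frac{za}{\theta(a+1)}$. I would verify it by comparing the constant and linear coefficients in $z$ with $p_0=1$ and $p_1^{a,y}=\frac{1-\theta y}{a+1}$ for $y\le-\frac a\theta$.

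Finally, the validity of \eqref{generatingfunction_yellow_and_green_rel} is only asserted on the common domain of convergence, and the identity is stated there. This domain is a neighbourhood of $0$ where all series converge absolutely, so no further analytic continuation argument is needed.
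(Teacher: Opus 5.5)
Your proposal is correct and follows the paper's proof. Like the paper, you apply \eqref{generatingfunction_yellow_and_green_rel} with $(\theta,\frac1a)$ in the green region and insert Lemma~\ref{generatingfunction_grune_region}. The one difference is the source of $\hat P_{\frac1a,\theta}(-z)$: the paper takes it from Lemma~7 of \cite{AurzadaRaschel}, while you obtain it by integrating out the uniform starting point, which makes the argument self-contained.

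There is a sign slip in your Step~2 display. The part over $[\frac1\theta,1]$ contributes $+\frac{1}{z}\bigl[E(\theta,\frac{za}{a+1})-E(\theta,\frac{za}{\theta(a+1)})\bigr]/E(\theta,\frac{za}{a+1})$, not $\frac{1}{-z}$ times that bracket. As a check, as $z\to0$ this contribution must tend to $\frac{a}{a+1}(1-\frac1\theta)>0$. Only with the plus sign does $1-z\hat P_{\frac1a,\theta}(-z)$ collapse to the denominator you state, which is itself correct.
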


We stress that the representation in the last lemma is valid only up to the radius of convergence of the series on the left-hand side, which will be determined in Lemma~\ref{lemma_root_yellow}.

\begin{proof}
    Note that for $\theta\in[-1,0)$ and $a\in(0,-\theta)$ from the yellow region, we have that the pair $\theta$ and $\tilde{a}\coloneqq\frac{1}{a}>\frac{-1}{\theta}$ is part of the green region. Moreover, if $y\in[-a,1]$, it holds that $\tilde{y}\coloneqq\frac{-y}{a}\in[-\tilde{a},1]$. So we already calculated the value of $\hat{P}_{\frac{1}{a},\tilde{y},\theta}(-z)$ in Lemma~\ref{generatingfunction_grune_region} in the green region, which we recall to be
    \begin{align*}
        \hat{P}_{\frac{1}{a},\tilde{y},\theta}(-z)
        = \begin{cases}
            1,~~&~-\frac{1}{a}\leq \tilde{y}<\frac{1}{\theta},
            \\\frac{E\left(\theta,\frac{\theta z\tilde{y}a}{a+1}\right)}{E\left(\theta,\frac{za}{a+1}\right)},~~&~\frac{1}{\theta}\leq \tilde{y} \leq 1.
        \end{cases} 
    \end{align*}
   Furthermore, the value of $\hat{P}_{\frac{1}{a},\theta}(-z)$ can be taken from Lemma 7 of the green region in \cite{AurzadaRaschel}:
    \begin{align*}
        \hat{P}_{\frac{1}{a},\theta}(-z)
        &=\frac{\theta +a}{\theta(a+1)}+\frac{E\left(\theta,\frac{za}{\theta(a+1)}\right)-E\left(\theta,\frac{za}{a+1}\right)}{-zE\left(\theta,\frac{za}{a+1}\right)}.
    \end{align*}
     Plugging the last two displays (using $\tilde y=\frac{-y}{a}$) into \eqref{generatingfunction_yellow_and_green_rel} gives us the two claimed formulas.
\end{proof}

The next step is to determine the radius of convergence of $\hat P_{a,y,\theta}$ in Lemma~\ref{generatingfunction_gelbe_region}. This radius is governed by the zero of smallest modulus of the denominator appearing there. We therefore consider the equation
\begin{equation}\label{root_yellow_2}
\frac{E\left(\theta,\frac{za}{\theta(a+1)}\right)}{E\left(\theta,\frac{za}{a+1}\right)} = z \,\frac{\theta+a}{\theta(a+1)},\qquad \text{for $z\in\C$}.
\end{equation}

Let $z_0\in\C$ be the solution of (\ref{root_yellow_2}) with smallest modulus.

\begin{Lemma}\label{lemma_root_yellow}
We have $z_0\in\R$, $z_0>0$, for any other solution $z_1\in\C$ of (\ref{root_yellow_2}) we have $|z_1|>z_0$, and $E(\theta,\frac{za}{a+1})\neq 0$ for all $|z|\leq z_0$.
\end{Lemma}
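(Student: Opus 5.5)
The plan is to work on the positive real axis with an explicit sign argument, and then to get the modulus statements from the structure of the zeros of $E(\theta,\cdot)$ together with positivity of the coefficients of $\hat P_{a,y,\theta}$. Write
$$D(z)\coloneqq E\left(\theta,\tfrac{za}{\theta(a+1)}\right)-z\,\tfrac{\theta+a}{\theta(a+1)}\,E\left(\theta,\tfrac{za}{a+1}\right).$$
Then \eqref{root_yellow_2} is equivalent to $D(z)=0$ as long as $E(\theta,\frac{za}{a+1})\neq0$. Let $w_0<0$ be the zero of $E(\theta,\cdot)$ of smallest modulus. By Lemma~\ref{nullstellen_exponentialfunktion}, every other zero $w$ satisfies $|w|>|w_0|/|\theta|\ge|w_0|$.

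\textbf{Step 1: a positive real root of $D$.} In the yellow region we have $\theta<0<a$ and $\theta+a<0$, hence $\frac{\theta+a}{\theta(a+1)}>0$. For real $z>0$ the argument $\frac{za}{\theta(a+1)}$ is negative. It reaches $w_0$ at $z^*\coloneqq\frac{|w_0|\,|\theta|(a+1)}{a}>0$.

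At $z=z^*$ the argument of the second factor is $\frac{z^*a}{a+1}=|\theta|\,|w_0|\le|w_0|$. This is positive, and it lies strictly inside the zero-free disc except in the boundary case $\theta=-1$. There the only zero on the circle of radius $|w_0|$ is $w_0<0$ itself, so the positive point $|w_0|$ is still not a zero.

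Since $E(\theta,0)=1$, the function $E(\theta,x)$ stays positive for $x\in[0,|\theta||w_0|]$. Consequently
$$D(z^*)=-z^*\tfrac{\theta+a}{\theta(a+1)}E\left(\theta,\tfrac{z^*a}{a+1}\right)<0,$$
while $D(0)=1$. The intermediate value theorem then gives a real root of $D$ in $(0,z^*)$; call the smallest one $r_0$.

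\textbf{Step 2: the factor $E(\theta,\frac{za}{a+1})$ does not vanish.} For $|z|<\frac{|w_0|(a+1)}{a}$ we have $|\frac{za}{a+1}|<|w_0|$, so $E(\theta,\frac{za}{a+1})\neq0$ on this disc. Since $r_0<z^*\le\frac{|w_0|(a+1)}{a}$, this gives $E(\theta,\frac{za}{a+1})\neq0$ for $|z|\le r_0$. This yields the last claim once we show $z_0=r_0$.

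\textbf{Step 3: $r_0$ is the unique root of smallest modulus.} On the disc $|z|<\frac{|w_0|(a+1)}{a}$, the function $P(z)\coloneqq E(\theta,\frac{-\theta zy}{a+1})/D(z)$ of Lemma~\ref{generatingfunction_gelbe_region}, for $y$ in the first case, is meromorphic. It coincides with $\hat P_{a,y,\theta}$ near $0$, and the latter has nonnegative coefficients $p_n^{a,y}(\theta)$.

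By Pringsheim's theorem, the radius of convergence $R$ of $\hat P_{a,y,\theta}$ is a singularity on $(0,\infty)$, and the series is analytic on $|z|<R$. Hence $D$ has no zeros in $|z|<R$, apart from ones cancelled by the numerator. The case $y=-a$ has numerator $E(\theta,\frac{-za}{a+1})$, which is positive for $z>0$. So the singularity at $R$ must be a zero of $D$ on $(0,\infty)$, and $R=r_0$. Moreover, $D$ has no zero of modulus $<r_0$ at which this numerator is nonzero.

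To rule out common zeros with the numerator, I would run the same argument with two different starting points $y$, whose numerators have disjoint zero sets. This would show that every zero of $D$ with $|z|<r_0$ is absent, so $z_0=r_0\in\R_{>0}$.

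\textbf{Main obstacle: strict dominance on the circle $|z|=r_0$.} I expect the hard part to be excluding other roots $z_1\neq r_0$ with $|z_1|=r_0$. Pringsheim alone does not give this. I would use that $r_0=1/\lambda$, where $\lambda$ is the persistence exponent and the spectral radius of the positive compact operator $T$ from the remark after Theorem~\ref{maintheorem_2}. A pole of $P$ at $z_1$ corresponds to an eigenvalue $1/z_1$ of $T$ of modulus $\lambda$.

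The operator $T^2$ has a kernel that is strictly positive on the relevant state space, because two steps can connect any two admissible points. By the strong form of Krein--Rutman (de Pagter/Jentzsch), $\lambda$ is then strictly dominant, so $|z_1|>r_0$.

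If this operator argument proves awkward to make rigorous, the fallback is direct. On $|z|=r_0$ with $z\ne r_0$, the aperiodicity $p_n^{a,y}(\theta)>0$ for all $n$ gives $|1-z\hat Q_{a,\theta}(-z)|>0$. I would get this from the renewal structure of Lemma~\ref{zusammenhang_genfct_yellow_p_und_q} by a triangle-inequality comparison with the real point $r_0$.
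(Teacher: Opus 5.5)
Your Steps 1 and 2 are essentially the paper's argument. Your $z^*$ is the paper's $-\theta m_0$; the paper argues with the ratio being positive on $(-m_0,-\theta m_0)$, whereas you use $D$ and the intermediate value theorem.

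For the two modulus claims you take a genuinely different route. The paper uses neither Pringsheim nor spectral theory. It divides $D$ by $E(\theta,\frac{za}{\theta(a+1)})$ instead of by $E(\theta,\frac{za}{a+1})$. This gives $1-\tilde g(z)$ with $\tilde g(z)=z\,\frac{\theta+a}{\theta(a+1)}\,\hat P_{\frac1a,1,\theta}(-z/\theta)$, which is the green-region generating function at a positive multiple of $z$. It is therefore a power series with strictly positive coefficients, converging on $|z|<-\theta m_0$. Then $|\tilde g(z)|\le\tilde g(|z|)<\tilde g(z_0)=1$ for $|z|<z_0$, and equality on $|z|=z_0$ forces $z=z_0$. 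So both claims follow from one strict triangle inequality, without first determining the radius of convergence of $\hat P_{a,y,\theta}$. Your route (Pringsheim for the interior, primitivity of the killed kernel for the circle) does not need this positive-coefficient identity. It is the abstract form of what the paper does by hand in the orange region (Lemma~\ref{lem:analyticityofremovedu}). The price is that several links must be supplied.

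\textbf{(i) Numerator in Step 3.} For $y=-a$ the numerator is $E(\theta,\frac{\theta za}{a+1})$, not $E(\theta,\frac{-za}{a+1})$, and it is not positive for all $z>0$. The two-starting-point device is unnecessary anyway. For every $y\in[-a,-a/\theta]$ one has $|\theta y|\le a$, so on $|z|\le r_0$ the numerator's argument has modulus at most $\frac{r_0a}{a+1}<|\theta||w_0|\le|w_0|$. Hence the numerator never vanishes there. You need this on the circle as well, to turn a zero of $D$ there into a pole.

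\textbf{(ii) From poles to eigenvalues.} The step ``pole at $z_1$ $\Rightarrow$ eigenvalue $1/z_1$ of modulus $\lambda$'' needs three things:
\begin{enumerate}
\item the identity $\hat P_{a,y,\theta}(z)=((I-zT)^{-1}\mathbf 1)(y)$, with $T$ compact on $C[-a,1]$;
\item meromorphic continuation of the resolvent;
\item a proof that $r(T)=1/r_0$, e.g.\ from $\|T^n\|=\sup_x p_n^{a,x}(\theta)=p_n^{a,1}(\theta)$ together with your Pringsheim argument for $y=1$.
\end{enumerate}
The paper's remark that the spectral radius equals $\lambda$ is not proved there.

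\textbf{(iii) Jentzsch only gives $\pm\lambda$.} The kernel of $T^2$ is $\frac{1}{(a+1)^2}\bigl(1-\max\{\theta x,w/\theta,-a\}\bigr)$, which is indeed continuous and strictly positive on $[-a,1]^2$. Applying Jentzsch's theorem to $T^2$ only yields $1/z_1=\pm\lambda$. To rule out $-\lambda$ you need the simplicity of $\lambda^2$ for $T^2$ together with the fact that $\lambda$ is itself an eigenvalue of $T$. De Pagter's theorem, which gives $r(T)>0$, plays no role here.

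\textbf{(iv) The fallback fails as stated.} The function $1-z\hat Q_{a,\theta}(-z)=1-\sum_n(-1)^nq_n^a(\theta)z^{n+1}$ has alternating-sign coefficients. So there is no triangle-inequality comparison with the real point $r_0$. Positivity or aperiodicity of the $p_n^{a,y}(\theta)$ alone does not exclude further singularities on the circle of convergence. Dividing by $E(\theta,\frac{za}{\theta(a+1)})$ instead, as the paper does, is exactly the repair.
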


\begin{proof}
Let us find a candidate for $z_0$ and then show that it is the solution of (\ref{root_yellow_2}) with smallest modulus. For this purpose, we first consider the real solutions of (\ref{root_yellow_2}). Define $m_0>0$ to be the solution with smallest modulus of the equation $E(\theta,\frac{-za}{a+1})=0$. This is a well-defined positive real number due to Lemma~\ref{nullstellen_exponentialfunktion}. Consider the function $z\mapsto \frac{E\left(\theta,\frac{za}{\theta(a+1)}\right)}{E\left(\theta,\frac{za}{a+1}\right)}$ for real $z$, which is the left-hand side of (\ref{root_yellow_2}). This function has a pole at $-m_0<0$, it has a zero at $-\theta m_0\in(0,m_0]$, its value at $0$ is $1$, and on $(-m_0,m_0)$ it has no other zeros; all of this follows by Lemma~\ref{nullstellen_exponentialfunktion}. By continuity, this means that it attains only positive values on $(-m_0,-\theta m_0)$. On the other hand, the right hand side of (\ref{root_yellow_2}) is a linear function with positive slope $\frac{\theta+a}{\theta(a+1)}$, which must have an intersection with the left-hand side for positive arguments. Therefore, there exists a smallest solution $z_0\in(0,-\theta m_0)$. 

Let us turn to solutions $z\in\C$ of (\ref{root_yellow_2}). We know, by Lemma~\ref{nullstellen_exponentialfunktion}, that $E(\theta,\frac{za}{\theta(a+1)})\neq 0$ for $z\in\C$ with $|z|<-\theta m_0$ and likewise $E(\theta,\frac{za}{a+1})\neq 0$ even for $z\in\C$ with $|z|<m_0$. This will imply the last part of the lemma's claim. Further, it shows that the (complex) solutions of (\ref{root_yellow_2}) for $|z|<m_0$ are the same as the (complex) zeros of the function
$$
g(z) :=  1 - z \,\frac{\theta+a}{\theta(a+1)} \cdot \frac{E\left(\theta,\frac{za}{a+1}\right)}{E\left(\theta,\frac{za}{\theta(a+1)}\right)}.
$$
It turns out that $g$ can be written with the help of a series with positive coefficients: Indeed, for $|-z/\theta|<m_0$, we have, by Lemma~\ref{generatingfunction_grune_region},
$$
g(z) =  1 - z \,\frac{\theta+a}{\theta(a+1)} \cdot \hat P_{\frac{1}{a},1,\theta}\left(\frac{-z}{\theta}\right) =  1 + \frac{\theta+a}{ a+1} \cdot \sum_{n=0}^\infty p_n^{\frac{1}{a},1}(\theta) (-1/\theta)^{n+1} z^{n+1} =: 1 - \tilde g(z),
$$
and the coefficients of the series of $\tilde g$, namely the numbers $\alpha_n:=-\frac{\theta+a}{a+1} p_n^{\frac{1}{a},1}(\theta) (-1/\theta)^{n+1}$, are all strictly positive. Note that the solutions to (\ref{root_yellow_2}) with $|z|<m_0$ correspond to the solutions of $g(z)=0$, i.e.\ $\tilde g(z)=1$.

We know from the construction above that $g(z_0)=0$, i.e.\ $\tilde g(z_0)=1$ and $0<z_0<m_0$. Now, let $z\in\C$ with $|z|<z_0$. Then, using positive coefficients,
$$
|\tilde g(z)| \leq \tilde g(|z|) < \tilde g(z_0) = 1.
$$
This shows that there cannot be any solution of $\tilde g(z_1)=1$ with $z_1\in\C$ and $|z_1|<z_0$. It remains to be seen that there are no solutions $z_1\in\C$ with $|z_1|=z_0$ either. Assume that $z_1=e^{i\phi } z_0$ is such a solution. Then
$$
1=|\tilde g(z_1)| \leq \tilde g(|z_1|) = \tilde g(z_0) =1, 
$$
and the triangle inequality step can only be an equality if all the terms in the sum have the same complex argument, i.e.\ $\arg(\alpha_n z_1^{n+1})=\arg(\alpha_m z_1^{m+1})$ for all $n,m$. Since the coefficients $\alpha_n$ are positive, this is true if and only if the $z_1^n$ have the same complex argument for all $n\geq 1$, i.e.\ $e^{i\phi n}$ is the same for all $n$, which can only happen for $\phi=0$, i.e.\ for $z_1=z_0$. This shows that there are no solutions $z_1\in \C$ of  (\ref{root_yellow_2}) with $|z_1|\leq z_0$, $z_1\neq z_0$.
\end{proof}

Using Lemma~\ref{asymptotic_powerseries}, we deduce the following asymptotics:
\begin{Lemma}\label{asymptotic_gelb_eigenfunction}
    In the yellow region, it holds with $\lambda=\lambda(\theta,a)$ as given in \eqref{root_yellow_2} that
    \begin{align*}
       p_n^{a,y}(\theta)\sim c(\theta,a)E\left(\theta,\frac{-\theta y}{\lambda(a+1)}\right)\lambda^{n+1}
   \end{align*}
   for $-a\leq y \leq \frac{-a}{\theta}$; and 
   \begin{align*}
       p_n^{a,y}(\theta)\sim c(\theta,a)E\left(\theta,\frac{a}{\lambda(a+1)}\right)\lambda^{n+1}
   \end{align*}
   for $\frac{-a}{\theta}<y\leq1$, with the abbreviation
    \begin{align*}
       c(\theta, a)\coloneqq\frac{1}{\frac{-a}{\theta(a+1)}\left(E\left(\theta,\frac{a}{\lambda(a+1)}\right)\frac{-\theta}{a}- \frac{\theta +a}{\lambda(a+1)}
        E\left(\theta,\frac{\theta a}{\lambda(a+1)}\right)\right)} 
   \end{align*}
   \end{Lemma}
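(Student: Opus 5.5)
Following the proof of Lemma~\ref{asymptotic_blau_erste}, the plan is to apply Lemma~\ref{asymptotic_powerseries} to the explicit generating function of Lemma~\ref{generatingfunction_gelbe_region}, with $z_0=1/\lambda$ the root of \eqref{root_yellow_2} singled out in Lemma~\ref{lemma_root_yellow}. Write the common denominator as
\[
D(z)\coloneqq E\left(\theta,\tfrac{za}{\theta(a+1)}\right)-z\,\tfrac{\theta+a}{\theta(a+1)}\,E\left(\theta,\tfrac{za}{a+1}\right)=E\left(\theta,\tfrac{za}{\theta(a+1)}\right)g(z),
\]
with $g$ as in the proof of Lemma~\ref{lemma_root_yellow}. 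The numerator is $N_y(z)=E(\theta,\frac{-\theta zy}{a+1})$ for $-a\le y\le -a/\theta$ and $N(z)=E(\theta,\frac{za}{a+1})$ otherwise. Both numerator and $D$ are entire. By Lemma~\ref{lemma_root_yellow}, $z_0>0$ is the unique zero of $D$ on $|z|\le z_0$, and since zeros of an entire function are isolated, there is $z_1>z_0$ such that $D$ has no other zero in $|z|<z_1$. In particular, the representation of Lemma~\ref{generatingfunction_gelbe_region}, valid near $0$, extends to $|z|<z_0$, so the radius of convergence is exactly $z_0$ once we know $N_y(z_0)\neq0$.

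Next I would show that $z_0$ is a simple zero of $D$. Since $E(\theta,\frac{z_0 a}{\theta(a+1)})\neq0$ (because $z_0<-\theta m_0$), it suffices that $g'(z_0)\neq0$. This follows from $g=1-\tilde g$, where $\tilde g$ has strictly positive coefficients, so $\tilde g'(z_0)>0$. Thus $f(z)\coloneqq N_y(z)(z_0-z)/D(z)$ is analytic on $|z|<z_1$ and satisfies $f(z_0)=-N_y(z_0)/D'(z_0)$. The derivative $D'(z_0)$ is computed via \eqref{eqn:dgl_n_exponential}, which gives
\[
D'(z)=\tfrac{a}{\theta(a+1)}E\left(\theta,\tfrac{za}{a+1}\right)-\tfrac{\theta+a}{\theta(a+1)}E\left(\theta,\tfrac{za}{a+1}\right)-z\tfrac{\theta+a}{\theta(a+1)}\tfrac{a}{a+1}E\left(\theta,\tfrac{\theta za}{a+1}\right).
\]
I would then regroup this at $z_0=1/\lambda$ and match it with the stated $c(\theta,a)$ after the common normalisation: $-1/D'(z_0)$ should reduce to $c(\theta,a)$, possibly using \eqref{root_yellow_2} to eliminate one term. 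Lemma~\ref{asymptotic_powerseries} then yields $p_n^{a,y}(\theta)\sim f(z_0)\lambda^{n+1}$, which is the claim.

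The remaining, and in my view main, obstacle is $N_y(z_0)\neq0$ for every $y$. The second case, $E(\theta,\frac{z_0a}{a+1})\neq0$, is part of Lemma~\ref{lemma_root_yellow}. For $-a\le y\le -a/\theta$, we have $|\theta y|\le a$ because $|\theta|\le1$ and $|y|\le\max(a,-a/\theta\cdot|\theta|)$. Hence $|\frac{-\theta z_0 y}{a+1}|\le\frac{z_0a}{a+1}<\frac{m_0a}{a+1}$, which lies strictly inside the zero-free disc of $E(\theta,\cdot)$ given by Lemma~\ref{nullstellen_exponentialfunktion} (recall $m_0a/(a+1)$ is the modulus of its smallest zero). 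So the numerator does not vanish. The delicate bookkeeping is therefore the sign and constant in $D'(z_0)$ rather than any analytic difficulty.
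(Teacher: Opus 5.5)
Your proposal is correct and follows the paper's proof essentially step by step: Lemma~\ref{asymptotic_powerseries} applied to $f=N_y(z_0-z)/D$, simplicity of $z_0$, the explicit $D'(z_0)$, and non-vanishing of the numerator via the zero-free disc of $E(\theta,\cdot)$. The differences are minor: you get simplicity from $\tilde g'(z_0)>0$ (positive coefficients), whereas the paper reads off $D'(z_0)<0$ from the signs of the two terms in the explicit derivative, and $-1/D'(z_0)$ equals $c(\theta,a)$ by direct regrouping without invoking \eqref{root_yellow_2}; also, for $y\in(a,-a/\theta]$ you do not have $|y|\le a$, but $|\theta y|\le a$ still holds since $|\theta y|\le|\theta|\cdot\frac{a}{|\theta|}=a$ there and $|\theta y|\le|\theta|a\le a$ for $y\in[-a,0]$.
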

\begin{proof}
Similarly to the proof of Lemma~\ref{asymptotic_blau_erste}, we construct a function $f(z)$ with
    \begin{align}\label{gelb_f}
        \sum_{n=0}^{\infty}p_n^{a,y}(\theta)z^n=\frac{f(z)}{z_0-z},~~~~\text{for all}~|z|<z_0,
    \end{align}
   which is analytic on $|z|<z_1$ with some $z_0<z_1$ and $f\! \left(z_0\right)\neq 0$. Applying Lemma~\ref{asymptotic_powerseries} will then yield the claim.
\\Recall that the generating function in Lemma~\ref{generatingfunction_gelbe_region} has the structure 
\begin{align*}
    \hat{P}_{a,y,\theta}(z)=\frac{u(z)}{g(z)}
\end{align*}
with $g(z)\coloneqq E\left(\theta,\frac{za}{\theta(a+1)}\right)-z\frac{\theta +a}{\theta(a+1)}E\left(\theta,\frac{za}{a+1}\right)$ and $u(z)$ depending on the choice of $y$: 
\begin{align*}
    u(z)\coloneqq\begin{cases}
         E\left(\theta,\frac{-\theta zy}{a+1}\right),~~~&-a\leq y \leq \frac{-a}{\theta},
         \\E\left(\theta,\frac{za}{a+1}\right),~~&\frac{-a}{\theta}<y\leq1.
    \end{cases}
\end{align*}
Moreover, $z_0>0$ is the root with the smallest absolute value of $g$. Set $v(z)\coloneqq g(z)\frac{1}{z_0-z}$.
We show that the function $f$ defined by $f(z)\coloneqq\frac{u(z)}{v(z)}$ satisfies the assumptions stated above.

By Lemma~\ref{lemma_root_yellow}, $z_0$ is the unique zero of $g$ of smallest modulus with all other zeros lying outside $|z|<z_1$ for some $z_1>z_0$. We will see below that $g'(z_0)<0$, implying that $z_0$ is a simple zero. Therefore, $v$ is analytic and does not vanish on a disk $|z|<z_1$ with $z_1>z_0$. Further, $u$ is entire so that $f$ is analytic on $|z|<z_1$. It remains to be seen that $f(z_0)\neq 0$ in order to apply Lemma~\ref{asymptotic_powerseries} and to compute $f(z_0)$.

For this purpose, note that $v(z_0)=-g'(z_0)$, which can be computed readily:
\begin{align*}
    g'(z_0) & = E\left(\theta,\frac{z_0 a}{a+1}\right) \frac{a}{\theta(a+1)} - \frac{\theta +a}{\theta(a+1)}E\left(\theta,\frac{z_0 a}{a+1}\right) - z_0 \frac{\theta +a}{\theta(a+1)}E\left(\theta,\frac{\theta z_0a}{a+1}\right) \frac{a}{a+1} 
    \\
    & = \frac{-1}{a+1} E\left(\theta,\frac{z_0 a}{a+1}\right)  - z_0 \frac{a(\theta +a)}{\theta(a+1)^2}E\left(\theta,\frac{\theta z_0a}{a+1}\right) < 0,
\end{align*}
because the $E$-terms are positive (due to Lemma~\ref{lemma_root_yellow}) while the prefactors are negative since $\theta\in[-1,0)$ and $a<-\theta$.

It remains to show $u(z_0)\neq 0$ with the help of Lemma~\ref{lemma_root_yellow}. For  $-a\leq y \leq \frac{-a}{\theta}$, we have $u(z)= E\left(\theta,\frac{-\theta zy}{a+1}\right)$ and $|\theta yz_0|\le z_0 a$ and thus,
$u(z_0)= E\left(\theta,\frac{-\theta z_0y}{a+1}\right)\neq 0$. 
  For $\frac{-a}{\theta}<y\leq1$, 
  $u(z)= E\left(\theta,\frac{za}{a+1}\right)$ and
      $u(z_0)= E\left(\theta,\frac{z_0a}{a+1}\right)
      \neq 0$.
   This gives us $f(z_0)\neq 0$ as well as the value of $f(z_0)=f(\frac{1}{\lambda})$ in both of the cases and hence Lemma~\ref{asymptotic_powerseries} yields the claims. 
   \end{proof}
Now we can deduce Theorem~\ref{maintheorem} for the yellow region. 
\begin{proof}[Proof of Theorem~\ref{maintheorem} (Y)]
There are three sets to be considered: For $-a\leq \theta x\leq -a \theta$ (i.e.\ $-a\leq x\leq -a/\theta$), we know from Lemma~\ref{asymptotic_gelb_eigenfunction} that $p_n^{a,x}(\theta)\sim c(\theta,a)h(x)\lambda^{n+1}$ with \linebreak $h(x)= E\left(\theta,\frac{-\theta x}{\lambda(a+1)}\right)$.
Further, for $\theta\leq \theta x \leq -a$ (i.e.\ $-a/\theta\leq x\leq 1$), we also know from Lemma~\ref{asymptotic_gelb_eigenfunction} that $p_n^{a,x}(\theta)\sim c(\theta,a)h(x)\lambda^{n+1}$ with $h(x)= E\left(\theta,\frac{a}{\lambda(a+1)}\right)$.
These two asymptotics are the input of Lemma~\ref{Prob=Lim_int_trans_kernel} which shows the convergence claim to the Doob transformed process in Theorem~\ref{maintheorem} (Y). 

It is left to prove the eigenvalue equation \eqref{eq_eigenfunction} for $-a\theta<\theta x\leq 1$ (i.e.\ $1/\theta\leq x < -a$, which is not needed for the Doob limit claim). Here, we directly verify the eigenvalue equation. We use $-a\leq -a\theta\leq\theta x\leq 1$ as well as $x<0$ to see that for $-a\theta\leq\theta x\leq\frac{-a}{\theta}$, we have
      \begin{align*}
        &\int_{\{y>\theta x\}\cap[-a,1]} h(y) \frac{1}{a+1}\,\d y
        \\&=\frac{1}{a+1}\int_{\theta x}^{\frac{-a}{\theta}}h(y)\d y+\frac{1}{a+1}\int_{\frac{-a}{\theta}}^1h(y)\d y
        \\&=\frac{1}{a+1}\int_{\theta x}^{\frac{-a}{\theta}}E\left(\theta,\frac{-\theta y}{\lambda(a+1)}\right)\d y+\frac{1}{a+1}\int_{\frac{-a}{\theta}}^1E\left(\theta,\frac{a}{\lambda(a+1)}\right)\d y
        \\&=\lambda E\left(\theta,\frac{-\theta x}{\lambda(a+1)}\right)-\lambda E\left(\theta,\frac{a}{\theta\lambda(a+1)}\right)+\frac{\theta+a}{\theta(a+1)}E\left(\theta,\frac{a}{\lambda(a+1)}\right)
        \\&=\lambda h(x),
      \end{align*}
      where we used \eqref{root_yellow_2} for $z_0=1/\lambda$ in the last step, and for $\frac{-a}{\theta}<\theta x\leq 1$,
      \begin{align*}
          &\int_{\{y>\theta x\}\cap[-a,1]} h(y) \frac{1}{a+1}\,\d y
        =\frac{1}{a+1}\int_{\theta x}^{1}h(y)\d y=\frac{1-\theta x}{a+1} E\left(\theta,\frac{a}{\lambda(a+1)}\right)=\lambda h(x). \qedhere
      \end{align*}
\end{proof} 
\section{The orange region}\label{orange_region}
In this region, we have $\theta\in(0,1)$ and $a\in[-\theta,0)$.
In contrast to the other regions, we define $\lambda$ as the inverse of the smallest positive root $z_0$ of the equation
\begin{align}\label{root_orange}
    zF(z)=1,
\end{align}
where
\begin{align*}
    F(z)\coloneqq F_{\theta,a}(z)\coloneqq\sum_{i=0}^p\frac{(-1)^i(\theta^i+a)^{i+1}}{(i+1)!\theta^{\frac{i(i+1)}{2}}(a+1)^{i+1}}\, z^i
    =\sum_{i=0}^{\infty}\frac{(-1)^i(\theta^i+a)_+^{i+1}}{(i+1)!\theta^{\frac{i(i+1)}{2}}(a+1)^{i+1}}\, z^i
\end{align*}
with $p\coloneqq p(\theta,a)\coloneqq\max\{n\in\N_0:\theta^n\geq-a\}$ and we will see that $\lambda$ is the persistence exponent. 

We know from Lemma 10 of \cite{AurzadaRaschel} that the function $F$ enters the generating function of the persistence probabilities without fixed starting point via
\begin{align} \label{eqn:orange:gfintegrated}
    \hat{P}_{a,\theta}(z)=\frac{F(z)}{1-zF(z)}.
\end{align}

One important step is to show that the radius of convergence of $\hat P_{a,\theta}$ (and subsequently of the $\hat P_{a,y,\theta}$) is given by $z_0$ defined above. Before this is assured, we prove all identities involving generating functions on the disk $|z|<1$ (which poses no problem, as for the involved $0\leq p_n^{a,y}(\theta)\leq 1$) and then extend to the common domain of convergence.

To begin with, consider the function
\begin{equation} \label{eqn:orange:defn.u}
u(z)\coloneqq u_{\theta,a}(z)\coloneqq 1-zF(z).
\end{equation}
Using the definition of $F$, we can consider this function for all parameters $\theta\in(0,1)$ and $a\in(-1,0)$. Note that the orange region requires $a\geq -\theta$, and only for such $\theta$ and $a$ the function $F$ and thus $u$ has a connection to the persistence probabilities in the orange region. Further, note that for $-1<a'<-\theta$, we have $p=0$ so that $u_{\theta,a'}(z)=1-z$.

We will need auxiliary steps in order to determine the true asymptotic rate of the $(p_n^{a,y}(\theta))$. We first deal with real solutions to the equation $u(z)=0$.

\begin{Lemma}\label{orange_F_monoton}
    For all $\theta\in(0,1)$ and $a\in(-1,0)$ we have $u'(z_0)< 0$, where $z_0=z_0(a)$ is the smallest positive real solution of $u(z)=0$.
\end{Lemma}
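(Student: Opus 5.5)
The plan is to read the inequality $u'(z_0)<0$ as the statement that the first positive zero of $u$ is simple. The sign is then automatic: $u(0)=1$ and $u>0$ on $[0,z_0)$ by minimality of $z_0$, so $u'(z_0)\le 0$, with equality exactly when $z_0$ is a zero of order at least two. It therefore suffices to rule out a multiple zero at $z_0$.

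First I would dispose of the trivial range $a\in(-1,-\theta)$. There $p=0$, so $u_{\theta,a}(z)=1-z$, which gives $z_0=1$ and $u'(z_0)=-1$. At the endpoint $a=-\theta$ we still have $p=0$, so the same formula applies. For the remaining range $a\in(-\theta,0)$, which lies in the orange region, I would use the probabilistic meaning of $u$. By \eqref{eqn:orange:gfintegrated}, for $|z|<1$,
\begin{align*}
\frac{1}{u(z)}=\frac{1}{1-zF(z)}=1+z\hat P_{a,\theta}(z)=:G(z),
\end{align*}
and $G$ has nonnegative coefficients $1,p_0^a(\theta),p_1^a(\theta),\dots$.

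Since $u$ is a polynomial, $G$ is meromorphic on $\C$. By Pringsheim's theorem, the radius of convergence $R$ of $G$ is a singularity of $G$ on the positive axis, hence a positive zero of $u$. Conversely, $G$ is finite on $[0,z_0)$, because it has positive coefficients and $u>0$ there, so $G$ cannot converge past $z_0$ where it blows up. Together these give $R=z_0$. The same comparison identifies $z_0=1/\lambda$, where $\lambda=\lim_n (p_n^a(\theta))^{1/n}$ is the spectral radius of the killed operator $T$ from the remark on Krein--Rutman.

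The key step is the a priori bound $p_n^a(\theta)\le C\lambda^n$. To get it, I would take the nonnegative continuous eigenfunction $h$ of $T$ given by Krein--Rutman and argue that it is bounded below by some $c>0$ on $[-a,1]$. Then, with $\1$ the constant function one,
\begin{align*}
p_n^{a,x}(\theta)=(T^n\1)(x)\le \frac{(T^nh)(x)}{c}=\frac{\lambda^n h(x)}{c},
\end{align*}
and integrating over $x$ gives $p_n^a(\theta)\le C\lambda^n$.

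Given this bound, for $0<r<z_0$ we have $G(r)\le 1+\sum_{n\ge 0} C\lambda^n r^{n+1}\le C'/(1-\lambda r)=C'z_0/(z_0-r)$. So $u(r)=1/G(r)\ge c'(z_0-r)$ as $r\uparrow z_0$. A zero of order $\ge 2$ would give $u(r)=O((z_0-r)^2)$, a contradiction. Hence $z_0$ is simple and $u'(z_0)<0$.

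The main obstacle is the lower bound $h\ge c>0$ on the whole of $[-a,1]$. Positivity of $h$ near $x=1/\theta$ is delicate, and $1/\theta>1$, so it may well be harmless. The eigenvalue equation gives $\lambda h(x)=\frac{1}{a+1}\int_{\theta x}^1 h$, which is monotone in $x$, so $h$ is minimal at $x=1$. I would try to show directly that $h(1)=\frac{1}{\lambda(a+1)}\int_\theta^1 h>0$. If $h(1)$ were zero, then $h$ would vanish on $[\theta,1]$, and iterating the equation would force $h\equiv0$, which is impossible.

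If this operator-theoretic route turns out awkward, the fallback is a continuity argument in $a$. I would follow the zero $z_0(a)$ from $a<-\theta$, where $u=1-z$, and note that a double zero could only appear at a parameter where the order of the zero jumps. I would then exclude this using the positivity of the coefficients of $G$.
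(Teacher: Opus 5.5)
Your proof is correct, but it takes a genuinely different route from the paper.

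\paragraph{The paper's route.} The paper argues by induction on $p$. It starts from $u=1-z$ when $p=0$ and combines two ingredients:
\begin{enumerate}
\item the explicit identity \eqref{eqn:orangerepresentationuprime}, which writes $u_{\theta,a}'$ in terms of $u_{\theta,a/\theta}$ at two rescaled arguments;
\item the comparison $z_0(a)\le z_0(a/\theta)\,\frac{a+1}{a/\theta+1}$ from Lemma~\ref{lem:orangecomparisonofpes}.
\end{enumerate}
From these it shows that $u_{\theta,a}$ is strictly decreasing on the whole interval $[0,z_0(a)]$.

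\paragraph{Your route.} You read $u'(z_0)<0$ as simplicity of the pole of $1/u=1+z\hat P_{a,\theta}$ at $z_0$. You then exclude a double pole via the growth bound $p_n^a(\theta)\le C z_0^{-n}$. That bound comes from a Krein--Rutman eigenfunction of the killed kernel which is bounded below by $h(1)>0$. Your argument for $h(1)>0$ works: $h(x)=0$ forces $h=0$ on $[\max(\theta x,-a),1]$, and iterating from $x=1$ exhausts $[-a,1]$ because $-a>0$.

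\paragraph{What each buys.} The paper's argument is elementary and self-contained, and it yields the stronger monotonicity statement. However, it depends on the special algebraic structure of $F$. Yours needs Pringsheim and Krein--Rutman, but it is structural: it would apply to any rational generating function with a positive killed kernel. It also makes explicit that $z_0$ equals the radius of convergence of $\hat P_{a,\theta}$. The paper uses this identification tacitly when it invokes Lemma~\ref{lem:orangecomparisonofpes} inside the induction, since there $z_0(a)$ is defined as a radius of convergence.

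\paragraph{Two small corrections.}
\begin{enumerate}
\item At $a=-\theta$ you do not have $p=0$. Since $\theta^1\ge -a$, you have $p=1$. The formula $u=1-z$ still holds, because the $i=1$ coefficient of $F$ contains the factor $(\theta+a)^2=0$.
\item The identification of the Krein--Rutman eigenvalue with $1/z_0$ must be proved, not just asserted; without it, your bound is only in terms of $r(T)$. The fix is short. Since $h$ is bounded above and below by positive constants, positivity of $T$ gives $c\lambda^n\le p_n^a(\theta)\le C\lambda^n$. Hence $\lambda=\lim_n (p_n^a(\theta))^{1/n}=1/R=1/z_0$ by Cauchy--Hadamard.
\end{enumerate}
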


For the proof, we will need an auxiliary representation of $u_{\theta,a}$. The proof follows directly from the explicit formulas for $u$ (and thus for the derivative $u'$).

\begin{Lemma}
Let $\theta\in(0,1)$ and $-\theta < a < 0$. Then
\begin{equation} \label{eqn:orangerepresentationuprime}
u_{\theta,a}'(z) = - \frac{1}{a+1} \left[ u_{\theta,a/\theta}\left( \frac{\theta+a}{a+1} \, z\right)+a u_{\theta,a/\theta}\left( \frac{\theta+a}{\theta(a+1)} \, z\right)\right].
\end{equation}
\end{Lemma}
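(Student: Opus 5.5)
The plan is to verify \eqref{eqn:orangerepresentationuprime} by comparing coefficients of $z^i$ on both sides, since every function involved is a polynomial. First I check that the right-hand side makes sense. For $-\theta<a<0$, the parameter $a':=a/\theta$ lies in $(-1,0)$, so $u_{\theta,a'}$ is defined. Moreover $\theta^{j}\ge -a'$ holds if and only if $\theta^{j+1}\ge -a$, so $p(\theta,a')=p(\theta,a)-1$. To avoid tracking summation limits, I will write $F$ with positive parts, $(\theta^i+a)_+^{i+1}$, and sum over all $i\ge0$. The truncations then match automatically via $(\theta^{j}+a')_+=\theta^{-1}(\theta^{j+1}+a)_+$.

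Next I differentiate the left-hand side term by term:
\begin{align*}
u_{\theta,a}'(z)=-\sum_{i\ge0}\frac{(-1)^i(\theta^i+a)_+^{i+1}}{i!\,\theta^{i(i+1)/2}(a+1)^{i+1}}\,z^i .
\end{align*}
The key algebraic step is to factor
\begin{align*}
(\theta^i+a)^{i+1}=\theta^i(\theta^{i-1}+a')^i\,(\theta^i+a),
\end{align*}
and then split the last factor as $\theta^i+a=\theta^i+a\cdot 1$. Since $\theta^{i(i+1)/2}=\theta^i\,\theta^{i(i-1)/2}$, the $i$-th coefficient becomes
\begin{align*}
-\frac{(-1)^i(\theta^{i-1}+a')^i}{i!\,\theta^{i(i-1)/2}(a+1)^{i+1}}\bigl(\theta^i+a\bigr).
\end{align*}

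On the right-hand side I expand $u_{\theta,a'}(cz)$ with the index shift $j=i-1$. Its coefficient of $z^{i}$ for $i\ge1$ is
\begin{align*}
-\frac{(-1)^{i-1}(\theta^{i-1}+a')^{i}}{i!\,\theta^{i(i-1)/2}}\Bigl(\frac{c}{a'+1}\Bigr)^i .
\end{align*}
Here $a'+1=(\theta+a)/\theta$. For $c=\frac{\theta+a}{a+1}$ this gives $\frac{c}{a'+1}=\frac{\theta}{a+1}$, and for $c=\frac{\theta+a}{\theta(a+1)}$ it gives $\frac{c}{a'+1}=\frac{1}{a+1}$. Multiplying by the outer factor $-\frac{1}{a+1}$ and weighting the second term by $a$, the two contributions reproduce exactly the $\theta^i$-part and the $a$-part of the split coefficient above, signs included.

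Finally I check the constant terms separately. The left-hand side gives $-(1+a)/(a+1)=-1$, and the right-hand side gives $-\frac{1}{a+1}(1+a)=-1$.

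There is no real obstacle. The only points needing care are the index shift between $p(\theta,a)$ and $p(\theta,a/\theta)$, which the positive-part formulation handles, and the sign bookkeeping between $u=1-zF(z)$ and its derivative.
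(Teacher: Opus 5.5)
Your proposal is correct and takes essentially the paper's route. The paper only remarks that the identity follows directly from the explicit polynomial formulas for $u$ and $u'$, and your coefficient comparison carries out exactly that verification. The positive-part form of $F$ is a clean way to handle $p(\theta,a/\theta)=p(\theta,a)-1$, and the factorisation and the two scaling ratios $\frac{\theta}{a+1}$ and $\frac{1}{a+1}$ check out.
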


The next lemma is an estimate on the persistence exponents for different $a$. It will be a useful auxiliary result later on. The proof follows directly by conditioning the random variables to be $\geq -a'$.

\begin{Lemma} \label{lem:orangecomparisonofpes}
Let $a'<a$. Then
$$
 p_{n}^{a}(\theta)
   \geq p_n^{a'}(\theta) \, \left( \frac{a'+1}{a+1} \right)^{n+1}.
$$
In particular, denoting by $z_0(a)$ the radius of convergence of the generating function $\hat P_{a,\theta}(z)$, we have
$$
z_0(a) \leq z_0(a')\, \frac{a+1}{a'+1}.
$$
\end{Lemma}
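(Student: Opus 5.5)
We prove the inequality by restricting the event $\Omega_n$ to the event that all innovations lie in the smaller interval $[-a',1]$. We assume $-1<a'<a$, so that both uniform laws are defined. Here $p_n^a(\theta)=\P(X_2\ge\theta X_1,\dots,X_{n+1}\ge\theta X_n)$, where $X_1,\dots,X_{n+1}$ are i.i.d.\ uniform on $[-a,1]$.

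Let $B\coloneqq\{X_i\ge -a' \text{ for all } 1\le i\le n+1\}$. Since the $X_i$ are independent,
$$\P(B)=\Big(\tfrac{1-(-a')}{a+1}\Big)^{n+1}=\Big(\tfrac{a'+1}{a+1}\Big)^{n+1}.$$
Conditionally on $B$, the variables $X_1,\dots,X_{n+1}$ are again i.i.d., and each is uniformly distributed on $[-a',1]$. Hence the conditional probability of $\Omega_n$ given $B$ equals $p_n^{a'}(\theta)$. This yields
$$p_n^a(\theta)=\P(\Omega_n)\ \ge\ \P(\Omega_n\cap B)=\P(\Omega_n\mid B)\,\P(B)=p_n^{a'}(\theta)\Big(\tfrac{a'+1}{a+1}\Big)^{n+1},$$
which is the first claim. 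It is the same conditioning trick as in \eqref{reduction_green_blue_prob}, except that here we only obtain an inequality, because we do not know that $\Omega_n\subseteq B$.

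For the radius of convergence we use the Cauchy--Hadamard formula, $z_0(a)^{-1}=\limsup_n p_n^a(\theta)^{1/n}$. Write $c\coloneqq\frac{a'+1}{a+1}$. From the inequality above and $c^{(n+1)/n}\to c$ we get
$$\limsup_n p_n^a(\theta)^{1/n}\ \ge\ c\,\limsup_n p_n^{a'}(\theta)^{1/n}.$$
In terms of radii this reads $z_0(a)^{-1}\ge c\, z_0(a')^{-1}$, that is, $z_0(a)\le z_0(a')\frac{a+1}{a'+1}$.

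The only point needing care is the degenerate case $z_0(a')=\infty$, i.e.\ $\limsup_n p_n^{a'}(\theta)^{1/n}=0$, which includes the case where $p_n^{a'}(\theta)$ vanishes eventually. In that case the asserted bound is trivially true. Otherwise both sides are finite positive numbers and the computation above applies verbatim.
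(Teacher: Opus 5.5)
Your proof is correct and takes the same approach as the paper, whose proof is the one-line remark that one conditions all the random variables to be $\geq -a'$. Restricting to the event that all $n+1$ innovations lie in $[-a',1]$, which has probability $\bigl(\frac{a'+1}{a+1}\bigr)^{n+1}$ and under which the $X_i$ are i.i.d.\ uniform on $[-a',1]$, gives the inequality, and your Cauchy--Hadamard step, including the degenerate case, correctly yields the bound on the radii of convergence.
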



We are now able to prove Lemma~\ref{orange_F_monoton}.
    
    \begin{proof}[Proof of Lemma~\ref{orange_F_monoton}]
    The proof follows by induction. We start with the case $p=0$, i.e.\ when $-1<a<-\theta$. Then $F(z)=1$ and $u(z)=1-z$ is strictly decreasing on $\R_{\ge0}$, in particular at its first zero $z_0=1$.

The induction hypothesis (the case $p-1$) is that $u_{\theta,a'}'(z')<0$ for all $0\leq z'\leq z_0(a')$ for all $a'$ with $\theta^{p-1}\geq -a'>\theta^p$. We now show the same for $p$, i.e.\ for $\theta^p \geq -a>\theta^{p+1}$ we claim that $u_{\theta,a}'(z)<0$ for all $0\leq z\leq z_0(a)$.

For ease of notation, set $a'\coloneqq a/\theta<a$ and
$$
\beta \coloneqq \frac{a+1}{a'+1} = \frac{\theta (a+1)}{\theta+a}>1.
$$
Let $0\leq z\leq z_0(a)$. Set $z'\coloneqq z/\beta$. Then, by Lemma~\ref{lem:orangecomparisonofpes}, we get $0\leq z'\leq z_0(a)/\beta\leq  z_0(a')$. Rewriting the identity (\ref{eqn:orangerepresentationuprime}) in the new notation, one obtains
\begin{equation} \label{eqn:orangecomparisoncrucial}
(a+1)(-u_{\theta,a}'(z)) = u_{\theta,a/\theta}\left( \theta \frac{z}{\beta} \right)+ a \cdot u_{\theta,a/\theta}\left( \frac{z}{\beta} \right) = u_{\theta,a'}\left(  \theta z'  \right)- (-a) u_{\theta,a'}\left( z' \right).
\end{equation} 
By the induction hypothesis, we know that since $0\leq z'\leq z_0(a')$, the function $u_{\theta,a'}$ is strictly decreasing. In particular, $u_{\theta,a'}\left(  \theta z'  \right)>u_{\theta,a'}\left( z' \right)\geq 0$ (exclude $z'=0$ here). Since $(-a)\in[0,1)$, this implies (including again $z'=0$)
$$
u_{\theta,a'}\left(  \theta z'  \right)>(-a)u_{\theta,a'}\left( z' \right).
$$
Inserting this into (\ref{eqn:orangecomparisoncrucial}) gives $u_{\theta,a}'(z)<0$ for all $0\leq z\leq z_0(a)$, in particular for $z=z_0(a)$.
    \end{proof}

    To compute the generating function, we partition the region of the possible starting point $y\in[-a,1]$ into the intervals 
$$A_m\coloneqq\bigg[\frac{-a}{\theta^m},\frac{-a}{\theta^{m+1}}\bigg),~~~0\le m\le p-1,~~~A_p\coloneqq\bigg[\frac{-a}{\theta^p},1\bigg].$$ 
Note that for $y\in A_m$ we have $m=\max\{n\in\N_0:\theta^ny\geq-a\}$ and that for all $1\le m\le p$, the case $y\in\big[\frac{-a}{\theta^m},\frac{-a}{\theta^{m+1}}\big)$ is equivalent to  $\theta y\in\big[\frac{-a}{\theta^{m-1}},\frac{-a}{\theta^{m}}\big)= A_{m-1}$. 
\\For $m\ge1$, we will use the following lemma to reduce the generating function in the region $A_m$ to the region $A_{m-1}$:
\begin{Lemma}\label{orange_GF_reduction}
    In the orange region, for $m\ge1$, it holds that
    \begin{align} \label{eqn:orange:dglviadifferentregions}
       \frac{\partial}{\partial y}\hat{P}_{a,y,\theta}(z)   
    =\frac{-\theta z}{a+1}\hat{P}_{a,\theta y,\theta}(z),~~~~y\in A_m,
    \end{align}
    and
    \begin{align}\label{continuous_orange}
    \hat{P}_{a,\frac{-a}{\theta^{m}},\theta}(z)=\lim_{y\nearrow\frac{-a}{\theta^{m}}}\hat{P}_{a,y,\theta}(z).
\end{align}
\end{Lemma}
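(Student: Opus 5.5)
The plan is to reproduce, in the orange region, the recursion argument used in the proof of Lemma~\ref{generatingfunction_blaue_region}, and to check that it stays valid on each interval $A_m$ with $m\ge1$. Throughout we work on the disk $|z|<1$, where all series converge absolutely since $0\le p_n^{a,y}(\theta)\le 1$.

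\textbf{Step 1 (the recursion).} Let $y\in A_m$ with $m\ge 1$. Then $\theta y\in A_{m-1}\subset[-a,1]$. In particular $\theta y\ge -a$, so the lower constraint $X_2\ge\theta y$ lies inside the support $[-a,1]$ and is not truncated. Exactly as in the blue region, conditioning on $X_2=x_2$ gives, for all $n\ge1$,
\begin{align*}
p_n^{a,y}(\theta)=\frac{1}{a+1}\int_{\theta y}^1 p_{n-1}^{a,x_2}(\theta)\,\d x_2 .
\end{align*}
Multiplying by $z^n$ and summing over $n\ge1$ yields
\begin{align*}
\hat P_{a,y,\theta}(z)=1+\frac{z}{a+1}\int_{\theta y}^1\hat P_{a,x_2,\theta}(z)\,\d x_2 .
\end{align*}
Here summation and integration may be exchanged because $|p_{n-1}^{a,x_2}(\theta)z^n|\le|z|^n$.

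\textbf{Step 2 (differentiation).} Differentiating in $y$ at interior points of $A_m$ gives \eqref{eqn:orange:dglviadifferentregions}, by the fundamental theorem of calculus. This step needs $x\mapsto \hat P_{a,x,\theta}(z)$ to be continuous near $\theta y$, or at least continuous from one side at the endpoints of the interval. I would get this from continuity of $x\mapsto p_n^{a,x}(\theta)$ for each $n$, which holds by dominated convergence in the iterated integral, together with uniform convergence of the series on $|z|\le r<1$. At the left endpoint $y=-a/\theta^m$, the derivative is read as one-sided.

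\textbf{Step 3 (continuity at the left endpoint).} For \eqref{continuous_orange}, the integral representation from Step 1 holds for every $y$ with $\theta y\ge -a$. That covers all $y\in[-a/\theta^{m},-a/\theta^{m+1})$, and also the points of $A_{m-1}$ just below $-a/\theta^m$ whenever $m-1\ge1$. The right-hand side is continuous in $y$, since it is an integral of a bounded function with a moving lower limit. Letting $y\nearrow -a/\theta^m$ therefore gives the claim.

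\textbf{Main obstacle.} The delicate case is $m=1$. Points $y<-a/\theta$ lie in $A_0$, where $\theta y<-a$. There the constraint $X_2\ge\theta y$ is automatically satisfied, and the formula becomes
\begin{align*}
\hat P_{a,y,\theta}(z)=1+\frac{z}{a+1}\int_{-a}^1\hat P_{a,x,\theta}(z)\,\d x ,
\end{align*}
with lower limit $-a$ instead of $\theta y$. Both expressions agree at $\theta y=-a$, so the left limit still matches the value at $-a/\theta$. The general principle is that $x\mapsto p_n^{a,x}(\theta)=\frac{1}{a+1}\int_{\max(\theta x,-a)}^1 p_{n-1}^{a,x_2}(\theta)\,\d x_2$ is continuous in $x$, because $\max(\theta x,-a)$ is continuous. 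I will state this once and then use it for every $m$.

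Finally, the same argument extends both identities from $|z|<1$ to the common domain of convergence.
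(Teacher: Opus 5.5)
Your proposal is correct and follows the paper's proof: you derive the same integral recursion $\hat P_{a,y,\theta}(z)=1+\frac{z}{a+1}\int_{\theta y}^1\hat P_{a,x,\theta}(z)\,\d x$ for $y\in A_m$, differentiate it in $y$, and obtain \eqref{continuous_orange} from continuity of $y\mapsto\hat P_{a,y,\theta}(z)$. The paper only asserts that continuity, and your Lipschitz argument via the lower limit $\max(\theta x,-a)$ supplies it, including the $m=1$ case where the left neighbourhood of $-a/\theta$ lies in $A_0$.
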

\begin{proof}
Let $y\in A_m$. For $1\le m\le p$, we have $1\ge\theta y\ge\frac{-a}{\theta^{m-1}}\ge-a$. Therefore, for $n\ge 1$,
\begin{align*}
     p_{n}^{a,y}(\theta)
     &=\P(\theta y\leq  X_2,\dots,\theta X_n\leq X_{n+1}) \notag
    \\&=\frac{1}{(a+1)^n}\int_{\theta y}^1\bigg[\int_{-a}^1\mathds{1}_{\{\theta x_{2}\leq x_{3}\}}\dots
  \int_{-a}^1\mathds{1}_{\{\theta x_{n}\leq x_{n+1}\}}\d x_{n+1}\dots \d x_3\bigg]\d x_2\notag
  \\&=\frac{1}{a+1}\int_{\theta y}^1 p_{n-1}^{a,x_2}(\theta)\d x_2.\notag
 \end{align*}
  Multiplying with $z^n$ and summing over $n$, we get
 \begin{align}
     \hat{P}_{a,y,\theta}(z)
       &=1+\frac{z}{a+1}\int_{\theta y}^1\hat{P}_{a,x_2,\theta}(z)\d x_2\label{eqn:orange:integralequationhatp}.
 \end{align}
 Taking the derivative with respect to $y$ gives us the equation (\ref{eqn:orange:dglviadifferentregions}).
Note that for $y\in A_m$, the function $\hat{P}_{a,\theta y,\theta}(z)$ is the generating function in the region $A_{m-1}$ since $\theta y\in A_{m-1}$. Moreover, the function
$y\mapsto\hat{P}_{a,y,\theta}(z)$ is continuous on $y\in[-a,1]$, so in particular at $y=\frac{-a}{\theta^{m}}$, which implies \eqref{continuous_orange}.
\end{proof}

In order to represent the generating function $\hat P_{a,y,\theta}$, we will use the following notation: 
Define $H_k(x)\coloneqq \frac{x^k}{k!}$ and note that $H'_{k+1}=H_k$ and $H_k(\theta x)=\theta ^kH(x)$. Now we start by treating the cases $y\in A_0$ and $y\in A_1$ and use the ideas to deduce a structure for general $y\in A_m$.
\begin{Lemma}\label{orange_GF_m=1}
   It holds
    \begin{align*}
        \hat{P}_{a,y,\theta}(z)&=c_{0,0}(\theta,a,z) H_0(y),~~~~~y\in A_0,
\\
        \hat{P}_{a,y,\theta}(z)&=\sum_{i=0}^1c_{1,i}(\theta,a,z)H_i(y),~~~~~y\in A_1,
    \end{align*}
    with \begin{align*}
   c_{0,0}&\coloneqq c_{0,0}(\theta,a,z)\coloneqq \frac{1}{1-zF(z)},
        \\c_{1,0}&\coloneqq c_{1,0}(\theta,a,z)\coloneqq  c_{0,0}+\frac{ -az}{a+1}\,c_{0,0} \qquad\text{and}\qquad
        c_{1,1}\coloneqq c_{1,1}(\theta,a,z)\coloneqq  \frac{- z\theta}{a+1}\,c_{0,0}.
\end{align*}
\end{Lemma}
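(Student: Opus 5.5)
The plan is to compute $\hat P_{a,y,\theta}$ first on $A_0$, where the first constraint is void, and then to pass to $A_1$ using Lemma~\ref{orange_GF_reduction}. Throughout, every identity is first established for $|z|<1$, where all the series converge because $0\le p_n^{a,y}(\theta)\le 1$. Afterwards the identities extend to the common domain of convergence, as announced at the beginning of this section.

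\emph{Case $y\in A_0$.} Here $\theta y<-a$, because $y<-a/\theta$ and $\theta>0$. Since $X_2\ge -a$ almost surely, the constraint $X_2\ge\theta y$ is automatically satisfied. Hence, for $n\ge1$,
\begin{align*}
p_n^{a,y}(\theta)=\P(X_3\ge\theta X_2,\dots,X_{n+1}\ge\theta X_n)=p_{n-1}^a(\theta).
\end{align*}
The last equality follows by shifting indices, since $X_2,\dots,X_{n+1}$ are i.i.d.\ uniform on $[-a,1]$, exactly like $X_1,\dots,X_n$ in the definition of $p_{n-1}^a(\theta)$; this also covers $n=1$, where both sides equal $1$. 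Summing with weights $z^n$ gives $\hat P_{a,y,\theta}(z)=1+z\hat P_{a,\theta}(z)$. Inserting \eqref{eqn:orange:gfintegrated} yields
\begin{align*}
\hat P_{a,y,\theta}(z)=1+\frac{zF(z)}{1-zF(z)}=\frac{1}{1-zF(z)}=c_{0,0}\,H_0(y).
\end{align*}
In particular, $\hat P_{a,y,\theta}$ does not depend on $y$ on $A_0$.

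\emph{Case $y\in A_1$.} This case is only relevant if $p\ge1$; otherwise $A_1$ is empty or not used. For $y\in A_1$ we have $\theta y\in A_0$. Equation \eqref{eqn:orange:dglviadifferentregions} with $m=1$, combined with the $A_0$ result, therefore gives
\begin{align*}
\frac{\partial}{\partial y}\hat P_{a,y,\theta}(z)=\frac{-\theta z}{a+1}\,c_{0,0}=c_{1,1}.
\end{align*}
Consequently $\hat P_{a,y,\theta}(z)=c_{1,0}+c_{1,1}H_1(y)$ on $A_1$ for some constant $c_{1,0}$ depending only on $(\theta,a,z)$. To identify this constant, we use the continuity statement \eqref{continuous_orange} at the common endpoint $y=-a/\theta$ of $A_0$ and $A_1$. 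More precisely, continuity of $y\mapsto\hat P_{a,y,\theta}(z)$ forces the $A_1$ formula, evaluated at $-a/\theta$, to coincide with the limit of the $A_0$ formula, which is $c_{0,0}$. This gives
\begin{align*}
c_{1,0}=c_{0,0}-c_{1,1}\Big(\frac{-a}{\theta}\Big)=c_{0,0}+\frac{-az}{a+1}\,c_{0,0},
\end{align*}
which is the claimed value.

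The only point requiring some care is the first step. One must check that the reduction to $p_{n-1}^a$ holds for every $y\in A_0$, including the endpoint $y=-a$, and that the identity \eqref{eqn:orange:gfintegrated} from \cite{AurzadaRaschel} may be used on $|z|<1$. The latter holds because $1-zF(z)$ is the denominator of a series with coefficients in $[0,1]$. Everything in the second step is a direct application of Lemma~\ref{orange_GF_reduction}.
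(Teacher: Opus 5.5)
Your proposal is correct and takes the same route as the paper: on $A_0$ you reduce to $p_n^{a,y}(\theta)=p_{n-1}^a(\theta)$ and use \eqref{eqn:orange:gfintegrated}, and on $A_1$ you integrate the differential equation of Lemma~\ref{orange_GF_reduction} and fix the constant by continuity at $-a/\theta$, which is what the paper's terser proof does. One minor remark: in the orange region $-a\le\theta$ forces $p\ge 1$, so your caveat that $A_1$ might be empty or unused never applies.
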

\begin{proof}
First, consider $y\in A_0$, which means that $\theta y<-a$. So for $n\geq 1$, we have
\begin{align*}
     p_{n}^{a,y}(\theta)
     &=\P(\theta y\leq  X_2,\theta X_2\leq X_3,\dots,\theta X_n\leq X_{n+1}) 
   = p_{n-1}^{a}(\theta),
\end{align*}
so $ \hat{P}_{a,y,\theta}(z)
    =1+\sum_{n=1}^{\infty}p_{n}^{a,y}(\theta)z^n
    =1+z\hat{P}_{a,\theta}(z)$.
The generating function $\hat{P}_{a,\theta}(z)$ can be taken from Lemma 10 of \cite{AurzadaRaschel} (cf.\ \eqref{eqn:orange:gfintegrated}) so that
\begin{align*}
    \hat{P}_{a,y,\theta}(z)=1+\frac{zF(z)}{1-zF(z)}=\frac{1}{1-zF(z)}=c_{0,0}.
\end{align*}
Using Lemma~\ref{orange_GF_reduction}, we now get a closed form in case $y\in A_1$: For $y\in A_1$, we have $\theta y\in A_0$ and thus,
\begin{align*}
    \hat{P}_{a,y,\theta}(z)=\frac{- z\theta}{a+1}c_{0,0} y+c_{0,0}+\frac{ -az}{a+1}c_{0,0},
\end{align*}
and the claim follows. 
\end{proof}
The previous lemma suggests the following general structure:
\begin{Corollary}\label{orange_GF_rekursive_coeff}
    In the orange region, for any $0\le m\le p$, it holds
    \begin{align*}
        \hat{P}_{a,y,\theta}(z)=\sum_{i=0}^mc_{m,i}(\theta,a,z)H_i(y),~~~~~y\in A_m,
    \end{align*}
   with radius of convergence $|z|<\frac{1}{\lambda}$ and recursive definitions
    \begin{align*}
   c_{0,0}&\coloneqq c_{0,0}(\theta,a,z)\coloneqq \frac{1}{1-zF(z)},
        \\c_{m,i}&\coloneqq c_{m,i}(\theta,a,z)\coloneqq \frac{-z\theta^i}{a+1}c_{m-1,i-1},~~~~~1\le i\le m,~~~~\text{and}
        \\c_{m,0}&\coloneqq c_{m,0}(\theta,a,z)\coloneqq c_{m,0}\coloneqq\sum_{i=0}^{m-1}\left(c_{m-1,i}H_i\left(\frac{-a}{\theta^m}\right)-c_{m,i+1}H_{i+1}\left(\frac{-a}{\theta^{m}}\right)\right).
\end{align*}
\end{Corollary}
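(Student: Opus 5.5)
We argue by induction on $m$, using Lemma~\ref{orange_GF_m=1} as the base case ($m=0$ and $m=1$) and Lemma~\ref{orange_GF_reduction} for the inductive step. Throughout, identities are first established for $|z|<1$, where all series converge absolutely since $0\le p_n^{a,y}(\theta)\le 1$. Only at the end do we extend them to the disk $|z|<\frac1\lambda$.

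For the inductive step, assume the representation holds on $A_{m-1}$ for some $1\le m\le p$, and let $y\in A_m$. Then $\theta y\in A_{m-1}$, so by the hypothesis and the scaling $H_i(\theta y)=\theta^iH_i(y)$, equation \eqref{eqn:orange:dglviadifferentregions} reads
\begin{align*}
\frac{\partial}{\partial y}\hat P_{a,y,\theta}(z)=\frac{-\theta z}{a+1}\sum_{i=0}^{m-1}c_{m-1,i}\,\theta^{i}H_i(y)=\sum_{i=0}^{m-1}\frac{-z\theta^{i+1}}{a+1}c_{m-1,i}H_i(y).
\end{align*}
Integrating in $y$ with $H_{i+1}'=H_i$ gives $\hat P_{a,y,\theta}(z)=C+\sum_{i=1}^{m}c_{m,i}H_i(y)$ on $A_m$, where $c_{m,i}=\frac{-z\theta^i}{a+1}c_{m-1,i-1}$ is exactly the recursive definition. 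The integration constant $C$ is fixed by the continuity statement \eqref{continuous_orange} at the left endpoint $y=\frac{-a}{\theta^m}$ of $A_m$. There the representation from $A_{m-1}$ holds as a limit, so
\begin{align*}
C+\sum_{i=1}^m c_{m,i}H_i\Big(\tfrac{-a}{\theta^m}\Big)=\sum_{i=0}^{m-1}c_{m-1,i}H_i\Big(\tfrac{-a}{\theta^m}\Big).
\end{align*}
Solving for $C$ and reindexing the second sum yields precisely the stated formula for $c_{m,0}$, which closes the induction. One technical point needs care: \eqref{continuous_orange} is the left limit of the $A_{m-1}$ formula (a polynomial in $y$), evaluated at $\frac{-a}{\theta^m}$. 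This is legitimate because the polynomial is continuous. For $m=p$ the interval $A_p$ is closed at $1$, which causes no issue.

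The main obstacle is the radius of convergence. By the recursion, every coefficient $c_{m,i}$ equals $c_{0,0}=1/u(z)$ multiplied by a polynomial in $z$. Hence $\hat P_{a,y,\theta}$ extends to a meromorphic function whose only possible singularities are zeros of $u=1-zF$. Since $p_n^{a,y}(\theta)\ge0$, Pringsheim's theorem places a singularity at the radius of convergence on the positive real axis. Comparison with the integrated generating function \eqref{eqn:orange:gfintegrated} (which has radius $z_0$, since $\hat P_{a,\theta}=F/u$ and $F(z_0)=1/z_0\neq0$), together with Lemma~\ref{P_x_estimate} and the integral relation \eqref{eqn:orange:integralequationhatp}, shows that the radius is at least $z_0$. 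The relation $\hat P_{a,y,\theta}=1+z\hat P_{a,\theta}$ for $y\in A_0$ shows it is exactly $z_0=1/\lambda$. For $y$ in the other $A_m$, we must check that the numerator does not vanish at $z_0$. We expect to obtain this from Lemma~\ref{orange_F_monoton}, since $u'(z_0)<0$ makes the pole simple, and from the positivity of the persistence probabilities. If a clean argument fails, the statement can be weakened to the identity on the common domain of convergence, with the radius settled afterwards in the asymptotic lemma.
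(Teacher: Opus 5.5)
Your induction for the identity is correct and is essentially the paper's argument. The paper verifies that the candidate $\sum_{i=0}^m c_{m,i}H_i(y)$ satisfies \eqref{eqn:orange:dglviadifferentregions} and the matching condition \eqref{continuous_orange}. You instead integrate the differential equation and solve for the constant, which is the same computation with the uniqueness step made explicit.

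The gap is in the radius-of-convergence claim. Two parts are fine: your lower bound on the radius (via Lemma~\ref{P_x_estimate} and $-a\in A_0$, so $p_n^{a,y}(\theta)\le p_{n-1}^a(\theta)$), and the case $y\in A_0$. But for $y\in A_m$ with $m\ge1$ you never show the radius is at most $z_0$, and the tools you name cannot give it. By Corollary~\ref{orange_GF_in_G_und_F}, $\hat P_{a,y,\theta}=G_{a,y,\theta}/u$ with a polynomial numerator. The fact $u'(z_0)<0$ only says the pole of $1/u$ is simple, and a zero of $G_{a,y,\theta}$ at $z_0$ would still cancel it. Nonnegativity of the coefficients plus Pringsheim only places the radius at a positive real zero of $u$ that is not cancelled by the numerator; it does not identify that zero as $z_0$. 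Deferring to the asymptotic lemma is circular, since Lemma~\ref{orange_asymptotic_first} itself needs $G_{a,y,\theta}(z_0)\neq 0$.

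The missing ingredient is a quantitative comparison of $p_n^{a,y}(\theta)$ with the unconditioned $p_n^a(\theta)$. The paper proves in Lemma~\ref{orange_aux_lemma_p_n} that $p_n^{a,y}(\theta)\ge\kappa(y)\,p^a_{n-m-1}(\theta)$ with $\kappa(y)>0$. The proof is by induction on $m$: pick $[\alpha,\beta]\subset[\theta y,1]$ with $\beta\in A_{m-1}$ and use that $x\mapsto p^{a,x}_{n-1}(\theta)$ is non-increasing for $\theta>0$. Then $\hat P_{a,y,\theta}(z)\ge\kappa(y)z^{m+1}\hat P_{a,\theta}(z)\to\infty$ as $z\uparrow z_0$. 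This forces the radius to be at most $z_0$ and also yields $G_{a,y,\theta}(z_0)>0$ (Lemma~\ref{lem:gofz0positive}).

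The paper's own proof of this corollary likewise only establishes the identity (first on $|z|<1$, then extended). It settles the radius later using exactly these lemmas, so adding this estimate completes your argument.
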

    \begin{proof}
       We prove the corollary via induction. For $y\in A_0$ and $y\in A_1$, we have already verified the claim in Lemma~\ref{orange_GF_m=1}. 
       Assume that for $\tilde y\in A_{m-1}$, we have $\hat{P}_{a,\tilde y,\theta}(z)=\sum_{i=0}^{m-1}c_{m-1,i}H_i(\tilde y)$. In particular, this holds for $\tilde y=\theta y$ when $y\in A_m$.
       We show that  the function $y\mapsto\sum_{i=0}^mc_{m,i}H_i(y)$ on $A_m$ satisfies the claims of Lemma~\ref{orange_GF_reduction}. First, we check the differential equation:
       \begin{align*}
           \frac{\partial}{\partial y}\sum_{i=0}^mc_{m,i}H_i(y)
           &=\sum_{i=1}^mc_{m,i}H_{i-1}(y)
           =\sum_{i=1}^m\frac{-z\theta^ic_{m-1,i-1}}{a+1}H_{i-1}(y)
          \\&=\frac{-z\theta}{a+1}\sum_{i=1}^mc_{m-1,i-1}H_{i-1}(\theta y)
           =\frac{-z\theta}{a+1}\sum_{i=0}^{m-1}c_{m-1,i}H_{i}(\theta y)
           \\&=\frac{-z\theta}{a+1}\hat{P}_{a,\theta y,\theta}(z).
       \end{align*}
      To check continuity in $\frac{-a}{\theta^m}$, we compute $
          \lim_{\tilde y\nearrow\frac{-a}{\theta^{m}}}\hat{P}_{a,\tilde y,\theta}(z)$ with the obtained formula for $
          \hat{P}_{a,\tilde y,\theta}(z)$ and $\tilde y\in A_{m-1}$. A direct computation yields
          \begin{align*}
              \lim_{\tilde y\nearrow\frac{-a}{\theta^{m}}}\hat{P}_{a,\tilde y,\theta}(z)
              &=\sum_{i=0}^{m-1}c_{m-1,i}H_i\left(\frac{-a}{\theta^m}\right)
              \\&=\sum_{i=1}^mc_{m,i}H_i\left(\frac{-a}{\theta^{m}}\right)+\sum_{i=0}^{m-1}\left(c_{m-1,i}H_i\left(\frac{-a}{\theta^m}\right)-c_{m,i+1}H_{i+1}\left(\frac{-a}{\theta^{m}}\right)\right)
              \\&=\sum_{i=1}^mc_{m,i}H_i\left(\frac{-a}{\theta^{m}}\right)+c_{m,0}H_0\left(\frac{-a}{\theta^{m}}\right)
              \\&=\sum_{i=0}^mc_{m,i}H_i\left(\frac{-a}{\theta^{m}}\right),
          \end{align*}
          which is equal to $ \hat{P}_{a,\frac{-a}{\theta^m},\theta}(z)$. This completes the proof.
    \end{proof}
Now we iterate the coefficients from the last lemma to get a form only depending on the coefficients $c_{i,0}$ for $0\le i\le m$:
\begin{Corollary}\label{orange_GF_iterierte_coeff}
    In the orange region, for $0\le m\le p$, it holds
    \begin{align} \label{eqn:orange:hh}
       \hat{P}_{a,y,\theta}(z)=\sum_{i=0}^m\frac{\theta^\frac{i(i+1)}{2}}{i!}\left(\frac{-zy}{a+1}\right)^ic_{m-i,0},~~~~~y\in A_m,
    \end{align}
    where $c_{0,0}=\frac{1}{1-zF(z)}$ and \begin{align}
            c_{k,0}
            &=\sum_{i=0}^{k-1}\theta^{\frac{i(i+1)}{2}-ki}\left(\frac{za}{a+1}\right)^{i+1}\frac{1}{(i+1)!}\left(\frac{(a+1)(i+1)}{za}-\theta^{i+1-k}\right)c_{k-1-i,0},~~~k\ge 1.\label{orange_sum_c}
        \end{align}
    \end{Corollary}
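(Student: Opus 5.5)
Both claims follow by unrolling the recursion of Corollary~\ref{orange_GF_rekursive_coeff}. First, iterating $c_{m,i}=\frac{-z\theta^i}{a+1}c_{m-1,i-1}$ exactly $i$ times gives
\begin{align*}
c_{m,i}=\left(\frac{-z}{a+1}\right)^i\theta^{i+(i-1)+\dots+1}c_{m-i,0}=\left(\frac{-z}{a+1}\right)^i\theta^{\frac{i(i+1)}{2}}c_{m-i,0},\qquad 0\le i\le m .
\end{align*}
This is a one-line induction on $i$, using $c_{m-i,0}$ at the bottom. Substituting into $\hat P_{a,y,\theta}(z)=\sum_{i=0}^m c_{m,i}H_i(y)$ with $H_i(y)=y^i/i!$ gives \eqref{eqn:orange:hh} for $y\in A_m$.

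For \eqref{orange_sum_c}, I insert the closed form of $c_{m,i}$ and $c_{m-1,i}$ into the recursive definition of $c_{k,0}$ (with $m=k$):
\begin{align*}
c_{k,0}=\sum_{i=0}^{k-1}\left(c_{k-1,i}H_i\left(\tfrac{-a}{\theta^k}\right)-c_{k,i+1}H_{i+1}\left(\tfrac{-a}{\theta^k}\right)\right).
\end{align*}
Both terms in the $i$-th summand carry the common factor $c_{k-1-i,0}$, because $c_{k-1,i}\propto c_{k-1-i,0}$ and $c_{k,i+1}\propto c_{k-1-i,0}$. The coefficient of $c_{k-1-i,0}$ is
\begin{align*}
\left(\frac{-z}{a+1}\right)^i\theta^{\frac{i(i+1)}{2}}\frac{1}{i!}\left(\frac{-a}{\theta^k}\right)^i-\left(\frac{-z}{a+1}\right)^{i+1}\theta^{\frac{(i+1)(i+2)}{2}}\frac{1}{(i+1)!}\left(\frac{-a}{\theta^k}\right)^{i+1}.
\end{align*}
Using $(-z)^i(-a)^i=(za)^i$ and $(-z)^{i+1}(-a)^{i+1}=(za)^{i+1}$, I factor out $\theta^{\frac{i(i+1)}{2}-ki}\left(\frac{za}{a+1}\right)^{i+1}\frac{1}{(i+1)!}$. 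The first term then leaves $\frac{(a+1)(i+1)}{za}$. The second leaves $\theta^{(i+1)}\theta^{-k}=\theta^{i+1-k}$, since $\frac{(i+1)(i+2)}{2}-\frac{i(i+1)}{2}=i+1$ and the extra power $\theta^{-k}$ comes from $(-a/\theta^k)^{i+1}$ versus $(-a/\theta^k)^i$. The result is exactly \eqref{orange_sum_c}.

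The only delicate point is the exponent bookkeeping for $\theta$ and the signs. Everything is valid on $|z|<1$, where all series converge, and it extends to the common domain of convergence as remarked earlier in the section.
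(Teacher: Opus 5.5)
Your proof is correct and follows the paper's argument. You iterate the recursion to get $c_{m,i}=\bigl(\frac{-z}{a+1}\bigr)^i\theta^{\frac{i(i+1)}{2}}c_{m-i,0}$ and substitute it into Corollary~\ref{orange_GF_rekursive_coeff} to obtain \eqref{eqn:orange:hh}. You then insert this closed form into the recursive definition of $c_{k,0}$ and factor out the common terms to reach \eqref{orange_sum_c}, just as the paper does, and your exponent and sign bookkeeping checks out.
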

    \begin{proof}
        By induction, we have for $m\ge 0$ and $0\le i \le m$
        \begin{align}\label{orange_ind_cmi}
            c_{m,i}
            =\left(\frac{-z}{a+1}\right)^i\theta^\frac{i(i+1)}{2}c_{m-i,0}.
        \end{align}
        Corollary~\ref{orange_GF_rekursive_coeff} therefore yields (\ref{eqn:orange:hh}). 
        Plugging \eqref{orange_ind_cmi} into the formula for $c_{k,0}$ for $1\le k\le m$ yields the second part: 
        \begin{align*}
            c_{k,0}
            &=\sum_{i=0}^{k-1}\left(\frac{-z}{a+1}\right)^i\theta^\frac{i(i+1)}{2}c_{k-1-i,0}H_i\left(\frac{-a}{\theta^k}\right)-\left(\frac{-z}{a+1}\right)^{i+1}\theta^\frac{(i+2)(i+1)}{2}c_{k-1-i,0}H_{i+1}\left(\frac{-a}{\theta^k}\right)   
            \\&=\sum_{i=0}^{k-1}\left(\frac{-z}{a+1}\right)^i\theta^\frac{i(i+1)}{2}c_{k-1-i,0}H_i\left(\frac{-a}{\theta^k}\right)\left(1-\frac{za\theta^{i+1-k}}{(a+1)(i+1)}\right)
            \\&=\sum_{i=0}^{k-1}\theta^{\frac{i(i+1)}{2}-ki}\left(\frac{za}{a+1}\right)^{i+1}\frac{1}{(i+1)!}\left(\frac{(a+1)(i+1)}{za}-\theta^{i+1-k}\right)c_{k-1-i,0}.\qedhere
        \end{align*}
    \end{proof}
Using the just obtained representation of the coefficients, we can eventually find a closed form of the generating function, isolating the dependence on the function $F$.
\begin{Corollary}\label{orange_GF_in_G_und_F}
    In the orange region, for $0\le m\le p$, we have  $$\hat{P}_{a,y,\theta}(z)=\frac{G_{a,y,\theta}(z)}{1-zF(z)}$$ where for $y\in A_m$, we set
    \begin{align*}
        G_{a,y,\theta}(z)\coloneqq\sum_{i=0}^m \left(\frac{-zy}{a+1}\right)^i\frac{\theta^\frac{i(i+1)}{2}}{i!}\sum_{{\ell}=0}^{m-i}\sum_{\substack{i_1+\dots +i_{\ell}= m -i-{\ell}\\i_r\ge 0}}\prod_{r=1}^{\ell}s_z(m-i-(r-1)-\sum_{j=1}^{r-1}i_j,i_r),
    \end{align*}
    with the convention that the sum on the right-hand side is equal to $1$ for ${\ell}=0$, $i=m$ and equal to $0$ for ${\ell}=0$, $i<m$, and
    \begin{align*}
        s_z(k,i)\coloneqq \theta^{\frac{i(i+1)}{2}-ki}\left(\frac{za}{a+1}\right)^{i+1}\frac{1}{(i+1)!}\left(\frac{(a+1)(i+1)}{za}-\theta^{i+1-k}\right).
    \end{align*}
    \end{Corollary}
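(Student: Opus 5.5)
The plan is to start from the representation in Corollary~\ref{orange_GF_iterierte_coeff}, namely
$\hat P_{a,y,\theta}(z)=\sum_{i=0}^m \frac{\theta^{i(i+1)/2}}{i!}\left(\frac{-zy}{a+1}\right)^i c_{m-i,0}$ for $y\in A_m$. Everything then reduces to a closed formula for each $c_{k,0}$ as $c_{0,0}=\frac{1}{1-zF(z)}$ times a polynomial in $z$. Define $d_k$ by $c_{k,0}=d_k\,c_{0,0}$. The recursion \eqref{orange_sum_c} becomes the linear recursion
\[
d_k=\sum_{i=0}^{k-1} s_z(k,i)\, d_{k-1-i},\qquad d_0=1,
\]
with exactly the weights $s_z(k,i)$ from the statement. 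Once the claimed formula for $d_k$ is established, substituting $k=m-i$ gives $G_{a,y,\theta}$ as stated, and the common factor $\frac{1}{1-zF(z)}$ factors out.

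The key step is to solve this recursion by unrolling it. I would prove by induction on $k$ that
\[
d_k=\sum_{\ell=0}^{k}\ \sum_{\substack{i_1+\dots+i_\ell=k-\ell\\ i_r\ge0}}\ \prod_{r=1}^{\ell} s_z\Big(k-(r-1)-\textstyle\sum_{j<r}i_j,\ i_r\Big),
\]
with the convention that the $\ell=0$ term equals $1$ exactly when $k=0$ and $0$ otherwise. This matches the statement's convention after setting $k=m-i$. The interpretation is a path decomposition. Starting at level $k$, each step chooses $i_r\ge 0$, contributes the weight $s_z(\text{current level},i_r)$, and moves down by $i_r+1$ levels. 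The paths are exactly those that terminate at level $0$, which forces $\sum_r(i_r+1)=k$, i.e.\ $\sum_r i_r=k-\ell$. The current level before step $r$ is $k-\sum_{j<r}(i_j+1)=k-(r-1)-\sum_{j<r}i_j$, which is precisely the first argument in the statement.

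For the inductive step, isolate the first step $i_1=i$ of each path. This contributes the factor $s_z(k,i)$. The remaining steps form a path starting from level $k-1-i$, and the first argument of each later factor, shifted by one index, is $(k-1-i)-(r'-1)-\sum_{j<r'}i'_j$. So the remaining sum equals $d_{k-1-i}$ by the induction hypothesis, and summing over $i$ reproduces the recursion. For $k\ge1$ the $\ell=0$ term vanishes by the convention, so the bookkeeping closes.

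I expect the main obstacle to be exactly this index bookkeeping: verifying that the shifted first arguments coincide and that the edge conventions ($\ell=0$, $i=m$) line up. The algebra in each individual step is trivial. A final remark should note that all identities hold first on $|z|<1$ and then on the common domain of convergence. This is valid because $G$ is a polynomial in $z$ and $y$.
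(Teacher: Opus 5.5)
Your proposal is correct and follows the paper's proof. You factor out $c_{0,0}=\frac{1}{1-zF(z)}$, unroll the recursion \eqref{orange_sum_c} by summing over all decompositions $k=\ell+i_1+\dots+i_\ell$, and substitute $k=m-i$ into Corollary~\ref{orange_GF_iterierte_coeff}; your explicit induction on $k$ (peeling off $i_1$ and checking that the shifted first arguments of $s_z$ agree) makes rigorous what the paper does informally by ``iterating.''
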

    \begin{proof} Iterating \eqref{orange_sum_c}, we have for $k\ge 1$:
    \begin{align}
        c_{k,0}&=\sum_{i=0}^{k-1}s_z(k,i)c_{k-1-i,0}\notag
        \\&=\sum_{i_1=0}^{k-1}s_z(k,i_1)\sum_{i_2=0}^{k-2-i_1}s_z(k-1-i_1,i_2)c_{k-2-i_1-i_2,0}\notag
           \\&=\frac{1}{1-zF(z)}\sum_{{\ell}=1}^k\sum_{\substack{i_1+\dots +i_{\ell}= k-{\ell}\\i_r\ge 0}}\prod_{r=1}^{\ell}s_z(k-(r-1)-\sum_{j=1}^{r-1}i_j,i_r),\notag
    \end{align}
    where we summed over all possible decompositions $k=\ell+i_1+\dots+i_\ell$.
    Plugging this into Corollary~\ref{orange_GF_iterierte_coeff} gives
    \begin{align*}
       \hat{P}_{a,y,\theta}(z)
       &= \sum_{i=0}^m\frac{\theta^\frac{i(i+1)}{2}}{i!}\left(\frac{-zy}{a+1}\right)^ic_{m-i,0}
       \\&=\frac{1}{1\!-\!zF(z)}\!\sum_{i=0}^{m}\frac{\theta^\frac{i(i+1)}{2}}{i!}\left(\frac{-zy}{1\!+\!a}\right)^i\sum_{{\ell}=0}^{m-i}\sum_{\substack{i_1+\dots +i_{\ell}= m-i-{\ell}\\i_r\ge 0}}\prod_{r=1}^{\ell}s_z(m\!-\!i\!-\!(r\!-\!1)\!-\!\sum_{j=1}^{r-1}i_j,i_r). \qedhere
    \end{align*}
    \end{proof}

In the proofs of the asymptotic behaviour of  $(p_n^{a,y}(\theta))$, we shall need some more auxiliary results.

\begin{Lemma}\label{orange_aux_lemma_p_n} Let $0\leq m\leq p$ and $y\in A_m$. Then there exists $\kappa(y)>0$ such that, for all $n\geq m+1$, we have
$$
p_n^{a,y}(\theta) \geq \kappa(y) p_{n-m-1}^a(\theta).
$$
As a consequence,
$$
\hat P_{a,y,\theta}(z) 
\geq \kappa(y) z^{m+1} \hat P_{a,\theta}(z),\qquad z\geq 0.
$$
\end{Lemma}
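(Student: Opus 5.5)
The plan is to restrict the first $m+1$ steps of the chain to an event of positive probability that ends in a state from which the remaining constraint is "fresh", and then use the Markov property. In the orange region we have $\theta\in(0,1)$ and $a\in[-\theta,0)$. For $y\in A_m$ the first constraint is $X_2\ge\theta y$, and $\theta y\in A_{m-1}$ when $m\ge1$; for $m=0$ the constraint $X_2\ge \theta y$ is weaker than $X_2\ge -a$.

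Guided by the proof of Lemma~\ref{orange_GF_m=1}, I will force the trajectory into $A_0$ at step $m+2$. The point is that once some $X_{k}$ satisfies $\theta X_k<-a$, the constraint $X_{k+1}\ge\theta X_k$ is automatic. The remaining probability is then exactly the unconditioned persistence probability $p^a_{\cdot}(\theta)$ of a fresh sequence, as in the identity $p_n^{a,y}(\theta)=p_{n-1}^a(\theta)$ for $y\in A_0$.

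Concretely, I consider the event
$$B\coloneqq\{X_2\ge\theta y,\ X_3\ge\theta X_2,\dots, X_{m+2}\ge\theta X_{m+1}\}\cap\{X_{m+2}\in[-a,-a/\theta)\}.$$
Here $[-a,-a/\theta)$ is nonempty since $a<0$ and $\theta<1$.

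Step 1 shows $\P_y(B)>0$. The simplest choice is to take every $X_k$, $2\le k\le m+2$, in a small interval near $-a$, say $[-a,-a+\varepsilon]$ with $\varepsilon>0$ small. This interval is contained in $[-a,-a/\theta)$. Then $\theta X_{k}\le\theta(-a+\varepsilon)< -a\le X_{k+1}$ for $\varepsilon$ small, because $\theta(-a)<-a$ when $-a>0$. Moreover $X_2\ge -a\ge\theta y$ whenever $\theta y\le -a$. This fails for $y\in A_m$ with $m\ge1$, where $\theta y\ge -a/\theta^{m-1}$. So instead I take $X_2\in[\theta y,1]$, a set of positive length, unless $\theta y=1$; that case is excluded since $\theta<1$ and $y\le1$. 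I then let $X_3,\dots$ descend geometrically, $X_{k+1}\in[\theta X_k,\theta X_k+\delta]$, until the value drops below $-a/\theta$. Because $\theta^{m}y\ge -a$, a descent of about $m$ steps suffices to reach $A_0$. This is exactly why $m+1$ is the right shift.

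To avoid bookkeeping, I will simply set $\kappa(y)\coloneqq\P_y(B')$ for a smartly chosen event $B'$ of the above form. Its positivity is clear because it is an intersection of finitely many conditions describing a product of intervals of positive length, when chosen with slack $\delta$ small. If the descent takes fewer steps, I fill the remaining steps with values in $[-a,-a+\varepsilon]$, which stay in $A_0$ and are admissible as shown above.

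Step 2 is the Markov property. On $B'$ we have $\theta X_{m+2}<-a$, so
$$p_n^{a,y}(\theta)\ge\P_y\big(B'\cap\{X_{m+3}\ge\theta X_{m+2},\dots,X_{n+1}\ge\theta X_n\}\big)=\E_y\big[\1_{B'}\,p_{n-m-1}^{a,X_{m+2}}(\theta)\big].$$
By Lemma~\ref{orange_GF_m=1}'s computation, for $x\in A_0$ we have $p_{k}^{a,x}(\theta)=p_{k-1}^a(\theta)$ for $k\ge1$. Since $p_{k}^{a,x}(\theta)$ is nonincreasing in $k$, this gives $p_{n-m-1}^{a,X_{m+2}}(\theta)=p^a_{n-m-2}(\theta)\ge p^a_{n-m-1}(\theta)$ for $n\ge m+2$; for $n=m+1$ the bound reads $1\ge p_0^a(\theta)=1$. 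Hence $p_n^{a,y}(\theta)\ge\kappa(y)\,p^a_{n-m-1}(\theta)$.

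The generating function bound then follows by multiplying by $z^n\ge0$, summing over $n\ge m+1$, and dropping the nonnegative terms with $n\le m$.

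The main obstacle is the explicit construction in Step 1 for $m\ge1$, namely checking that a chain started at $y\in A_m$ can reach $A_0$ in at most $m+1$ admissible steps on a set of positive probability. The monotone descent $X_{k+1}\approx\theta X_k$ and the definition of $A_m$ should settle this.
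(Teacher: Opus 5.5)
Your argument is correct, but it is organized differently from the paper's. The paper inducts on $m$. For $y\in A_m$, $m\ge1$, it restricts $X_2$ to $[\theta y,\beta]$ with $\beta=\tfrac12\bigl(\theta y+\tfrac{-a}{\theta^m}\bigr)\in A_{m-1}$. It then bounds the integrand below by $p^{a,\beta}_{n-1}(\theta)$, using that $x\mapsto p^{a,x}_{n-1}(\theta)$ is nonincreasing (as $\theta>0$), and applies the induction hypothesis at $\beta$. The base case is $p^{a,y}_n(\theta)=p^a_{n-1}(\theta)$ on $A_0$, so $\kappa=1$ there.

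You instead unroll the descent through $A_{m-1},\dots,A_0$ into one explicit event and use the Markov property once. Your route needs no monotonicity in the starting point. If you stop at $X_{m+1}\in A_0$ rather than $X_{m+2}$, you even get the stated bound exactly, without monotonicity in $n$. The paper's induction is shorter, because its one-step choice of $\beta$ absorbs all the path bookkeeping that you must do by hand.

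When you write out Step 1, tighten two points.
\begin{itemize}
\item The positivity check needs $X_2$ in a short window $[\theta y,\theta y+\delta]$, not merely in $[\theta y,1]$. For $X_2$ near $1$ (in $A_p$) and $m<p$, the constraints force $X_{m+2}\ge\theta^m X_2\ge -a/\theta$, so no admissible path reaches $A_0$ in time.
\item The fact that makes $m$ descent steps suffice is $\theta^{m+1}y<-a$ (for $m=p$ this follows from $\theta^{p+1}<-a$ and $y\le1$). By contrast, $\theta^m y\ge -a$ is what keeps the windows $[\theta X_k,\theta X_k+\delta]$, $2\le k\le m$, inside $[-a,1]$.
\end{itemize}
With both points in place, $X_{m+1}\in[\theta^m y,\theta^m y+\delta/(1-\theta)]\subset A_0$ for small $\delta$. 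Your filler step $X_{m+2}\in[-a,-a+\varepsilon]$ is then admissible, and $\P_y(B')=\bigl(\tfrac{\delta}{a+1}\bigr)^m\tfrac{\varepsilon}{a+1}>0$, computed as an iterated integral.
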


\begin{proof} 
Let us start with the case $m=0$. Then $\theta y\leq -a$ and we have
$$
p_n^{a,y}(\theta) = p_{n-1}^a(\theta),
$$
so that the claim holds for $m=0$ with $\kappa(y)\coloneqq 1$. For $m\geq 1$, we proceed by induction. Let $y\in A_m$. Assume there exist $\alpha=\alpha(y)$ and $\beta=\beta(y)$ with
\begin{equation} \label{eqn:choicealphabeta}
-a\leq \alpha < \beta \leq 1, \qquad \alpha\geq \theta y, \qquad \beta \in A_{m-1}.
\end{equation}
Then, by the Markov property and the monotonicity of $x\mapsto p_{n-1}^{a,x}(\theta)$,
\begin{align*}
p_n^{a,y}(\theta)
 = & \, \int_{-a}^1 \1_{x_2\geq \theta y} \, p_{n-1}^{a,x_2}(\theta) \, \frac{1}{a+1}\d x_2
\geq  \, \int_{\alpha}^\beta \, p_{n-1}^{a,x_2}(\theta) \, \frac{1}{a+1}\d x_2
\\\geq & \, \int_{\alpha}^\beta \, p_{n-1}^{a,\beta}(\theta) \, \frac{1}{a+1}\d x_2
=  \, \frac{\beta-\alpha}{a+1}\, p_{n-1}^{a,\beta}(\theta).
\end{align*}
Since $\beta\in A_{m-1}$, we can apply the induction hypothesis and estimate 
$$
p_{n-1}^{a,\beta}(\theta)\geq \kappa(\beta) p_{(n-1)-(m-1)-1}^a(\theta)=\kappa(\beta) p_{n-m-1}^a(\theta),
$$
so that for $y\in A_m$ we can set $\kappa(y)\coloneqq \kappa(\beta(y)) \frac{\beta(y)-\alpha(y)}{a+1}$ and obtain the claim by combining the last two displays.

We still have to show that there are valid choices for $\alpha=\alpha(y)$ and $\beta=\beta(y)$ satisfying (\ref{eqn:choicealphabeta}): set 
$$
\alpha\coloneqq \theta y,\qquad \beta\coloneqq \frac{1}{2}\left( \theta y+\frac{-a}{\theta^m}\right).
$$
Then $\alpha=\theta y \geq -a$, since $m\geq 1$. Further,
$$
\beta - \alpha= \frac{1}{2}\left( \theta y+\frac{-a}{\theta^m}\right) - \theta y =\frac{1}{2}\left( -\theta y+\frac{-a}{\theta^m}\right) > \frac{1}{2}\left(-\theta \frac{-a}{\theta^{m+1}}+\frac{-a}{\theta^m}\right) = 0,
$$
which in particular means
$
\beta > \alpha = \theta y \geq \frac{-a}{\theta^{m-1}}
$.
Finally,
$$
\beta = \frac{1}{2}\left( \theta y+\frac{-a}{\theta^m}\right) < \frac{1}{2}\left( \theta \frac{-a}{\theta^{m+1}}+\frac{-a}{\theta^m}\right) = \frac{-a}{\theta^m} \leq 1,
$$
so that indeed $\beta\in A_{m-1}$. Note that in case $m=p$, we additionally used the fact that $\theta<\frac{-a}{\theta^p}$ in the inequalities above.
\end{proof}

A consequence from the last auxiliary result is the following statement about $G_{a,y,\theta}$ at $z_0$.

\begin{Lemma} \label{lem:gofz0positive}
    We have $G_{a,y,\theta}(z_0)>0$.
\end{Lemma}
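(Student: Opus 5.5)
The approach is to compare the representation $\hat P_{a,y,\theta}(z)=G_{a,y,\theta}(z)/(1-zF(z))$ from Corollary~\ref{orange_GF_in_G_und_F} with the formula $\hat P_{a,\theta}(z)=F(z)/(1-zF(z))$ from \eqref{eqn:orange:gfintegrated}, using the lower bound in Lemma~\ref{orange_aux_lemma_p_n}, and then to let $z\nearrow z_0$ along the real axis. The function $G_{a,y,\theta}$ is a polynomial in $z$ for fixed $y\in A_m$: each $s_z(k,i)$ is a polynomial in $z$, since the factor $\frac{(a+1)(i+1)}{za}$ cancels one power of $z$. Hence $G_{a,y,\theta}$ is entire, in particular continuous at $z_0$.

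\textbf{Step 1 (nonnegativity).} For real $0\le z<z_0$, every $p_n^{a,y}(\theta)\ge 0$, so $\hat P_{a,y,\theta}(z)\ge 1>0$. Moreover $u(z)=1-zF(z)>0$ on $[0,z_0)$, because $u(0)=1$ and $z_0$ is the smallest positive zero of $u$. Therefore $G_{a,y,\theta}(z)=u(z)\hat P_{a,y,\theta}(z)>0$ on $[0,z_0)$, and continuity gives $G_{a,y,\theta}(z_0)\ge 0$. It remains to exclude $G_{a,y,\theta}(z_0)=0$.

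\textbf{Step 2 (strictness).} By Lemma~\ref{orange_aux_lemma_p_n}, for $0\le z<z_0$ we have
\begin{align*}
\frac{G_{a,y,\theta}(z)}{u(z)}=\hat P_{a,y,\theta}(z)\ge \kappa(y)z^{m+1}\hat P_{a,\theta}(z)=\kappa(y)z^{m+1}\frac{F(z)}{u(z)}.
\end{align*}
Multiplying by $u(z)>0$ gives $G_{a,y,\theta}(z)\ge \kappa(y)z^{m+1}F(z)$. Letting $z\nearrow z_0$ yields $G_{a,y,\theta}(z_0)\ge \kappa(y)z_0^{m+1}F(z_0)$. Since $u(z_0)=0$, we have $z_0F(z_0)=1$, so $F(z_0)=1/z_0>0$. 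As $\kappa(y)>0$ and $z_0>0$, we conclude $G_{a,y,\theta}(z_0)\ge \kappa(y)z_0^{m}>0$.

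\textbf{Main obstacle.} The delicate point is making sure the identities above hold on the whole interval $[0,z_0)$, and not only on $|z|<1$ where they were first established. I would argue as follows. Both sides of $\hat P_{a,y,\theta}=G_{a,y,\theta}/u$ are analytic on the disc $|z|<z_0$. For the right-hand side this is because $u$ has no zeros there: on the positive real axis this is the definition of $z_0$, and off the axis it follows by the positivity-of-coefficients argument used in Lemma~\ref{lemma_root_yellow}, since $zF(z)=\hat P_{a,\theta}(z)/(1+\hat P_{a,\theta}(z))\cdot$\,(a series with nonnegative coefficients). By Pringsheim's theorem, $\hat P_{a,\theta}$, which has nonnegative coefficients, has radius of convergence equal to its first positive singularity, namely $z_0$. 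The same holds for $\hat P_{a,y,\theta}$, whose coefficients are dominated via Lemma~\ref{P_x_estimate} and bounded below via Lemma~\ref{orange_aux_lemma_p_n}. Analytic continuation then extends the identities to $[0,z_0)$.

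A cruder route also works: the inequality $G_{a,y,\theta}(z)\ge\kappa(y)z^{m+1}F(z)$ only compares power series with nonnegative terms on the left, so it suffices that the series converge for $z<z_0$. This holds because $\hat P_{a,\theta}(z)=F/u$ is finite for $z<z_0$.
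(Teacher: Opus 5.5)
Your Step~2 is exactly the paper's proof: for real $0\le z<z_0$ combine $\hat P_{a,y,\theta}=G_{a,y,\theta}/u$, the bound of Lemma~\ref{orange_aux_lemma_p_n} and \eqref{eqn:orange:gfintegrated}, multiply by $u(z)>0$, and let $z\nearrow z_0$ using $z_0F(z_0)=1$; your Step~1 is redundant. Your Pringsheim-plus-domination argument that these identities hold on $[0,z_0)$ makes explicit a point the paper leaves implicit, and it is sound. The aside expressing $zF$ through a series with nonnegative coefficients is garbled, since $F$ has alternating signs and in fact $u=1/(1+z\hat P_{a,\theta})$; you do not need it, because $G_{a,y,\theta}=u\,\hat P_{a,y,\theta}$ extends by the identity theorem to the whole disc of convergence of $\hat P_{a,y,\theta}$.
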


\begin{proof}
Using Corollary~\ref{orange_GF_in_G_und_F}, Lemma~\ref{orange_aux_lemma_p_n} and (\ref{eqn:orange:gfintegrated}), we have for real $0\leq z<z_0$ and with  $\kappa(y)>0$ that
\begin{align*} 
\frac{G_{a,y,\theta}(z)}{u(z)}
& = \hat P_{a,y,\theta}(z)
 \geq \kappa(y) z^{m+1} \hat P_{a,\theta}(z)
 =\kappa(y) z^{m+1} \, \frac{F(z)}{u(z)},
\end{align*}
where $u$ is as in (\ref{eqn:orange:defn.u}).
Since $u(z)>0$ for all $0\leq z<z_0$ (recall that $z_0$ is the smallest positive real zero of $u$ and $u(0)=1$), we know that for all $0\leq z <z_0$
\begin{align*} 
 G_{a,y,\theta}(z)
& \geq  \kappa(y) z^{m+1}F(z),
\end{align*}
and letting $z\uparrow z_0$, we see that, using the definition of $z_0$,
\begin{align*} 
 G_{a,y,\theta}(z_0)
& \geq  \kappa(y) z_0^{m+1}F(z_0) =  \kappa(y) z_0^{m+1}\, \frac{1}{z_0} > 0. \qedhere
\end{align*}
 \end{proof}

Using the last auxiliary result, we can now show that the radius of convergence of $\hat P_{a,y,\theta}$ is indeed $z_0>0$ as defined in Lemma~\ref{orange_F_monoton}.

Recall that Corollary~\ref{orange_GF_in_G_und_F} states that
$\hat P_{a,y,\theta}(z) = \frac{G_{a,y,\theta}(z)}{u(z)},
$
where $G_{a,y,\theta}(z)$ is a polynomial in $z$. Recall from Lemma~\ref{orange_F_monoton} that there exists $z_0>0$ such that $u(z_0)=0$ and $z_0$ is a simple root of $u$. While Lemma~\ref{orange_F_monoton} is concerned with real solutions to the equation $zF(z)=1$, i.e.\ the real roots of $u(z)=0$, we will show that all other roots have modulus that is strictly larger than $z_0$.

\begin{Lemma} \label{lem:analyticityofremovedu}
Let $z_1\in\C$ with $|z_1|\leq z_0$ and $u(z_1)=0$. Then $z_1=z_0$. In particular, there exists an $\varepsilon>0$ such that for any $z_1\neq z_0$ that is a zero of $u(z)$ we have $|z_1|>z_0+\varepsilon$.
\end{Lemma}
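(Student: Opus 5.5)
We would mimic the positivity argument from the proof of Lemma~\ref{lemma_root_yellow}: write $zF(z)$ as a power series with nonnegative coefficients and use the strict triangle inequality. By \eqref{eqn:orange:gfintegrated} we have, for $|z|<1$,
$$
\hat P_{a,\theta}(z)=\frac{F(z)}{1-zF(z)},\qquad\text{hence}\qquad zF(z)=\frac{z\hat P_{a,\theta}(z)}{1+z\hat P_{a,\theta}(z)}.
$$
This is not yet positive, so instead we use the renewal structure behind \eqref{eqn:orange:gfintegrated}. The plan is to set $\phi(z)\coloneqq zF(z)=\sum_{n\ge1} f_n z^n$, a polynomial, and to show that $f_n\ge0$ for all $n$, with $f_1>0$ and at least two indices $n$ for which $f_n>0$ with $\gcd=1$. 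In fact $f_1=\frac{1+a}{a+1}\cdot 1=1\cdot\frac{(1+a)}{(a+1)}$; more precisely $f_1=\frac{(\theta^0+a)}{a+1}=1$ when $p\ge0$. To get nonnegativity, we would identify $f_n$ probabilistically. Since $u=1-\phi$, the relation $\hat P_{a,y,\theta}=1/u$ for $y\in A_0$ (Lemma~\ref{orange_GF_m=1}) shows that $1/(1-\phi(z))$ is the generating function of the probabilities $p_n^{a,y}$, $y\in A_0$. We would then interpret $\phi$ as the generating function of a first-renewal time: the first index $i\ge1$ at which $\theta X_{i+1}\le -a$, i.e.\ $X_{i+1}\le -a/\theta$. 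After such an index, the next step constraint $X_{i+2}\ge\theta X_{i+1}$ is automatically satisfied because $\theta X_{i+1}\le -a$, so the chain regenerates exactly as if started in $A_0$. Hence $p_n^{a,y}$ with $y\in A_0$ is a renewal sequence, and $f_n=\P(\text{first regeneration at time }n\text{ and persistence up to }n)\ge0$. We would verify this identification by checking that the renewal equation reproduces $1/(1-zF(z))$; since both sides are determined by the sequence $p_n$, uniqueness of the solution of the renewal equation gives $f_n\ge0$ directly, with $f_n=0$ for $n>p+1$, because a run with all $X\ge -a/\theta$ must regenerate within $p+1$ steps.

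Given $f_n\ge0$, we argue as follows. For $|z_1|\le z_0$ we have $|\phi(z_1)|\le\phi(|z_1|)\le\phi(z_0)=1$, with strict inequality if $|z_1|<z_0$ since $\phi$ is strictly increasing on $[0,\infty)$ ($f_1>0$). If $|z_1|=z_0$ and $\phi(z_1)=1$, equality in the triangle inequality forces $z_1^n$ to have the same argument for all $n$ with $f_n>0$. Since $f_1>0$, all these arguments equal $\arg z_1$, so $z_1^{n-1}$ must be real positive for each such $n$. The claim then follows because $f_1z_1$ itself must be positive real, which forces $z_1=z_0$. This last point is immediate from $f_1>0$, so aperiodicity issues disappear.

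Finally, the $\varepsilon$ statement follows because $u$ is a nonconstant polynomial with finitely many zeros. Those other than $z_0$ all have modulus $>z_0$, and we take $\varepsilon$ to be half the gap between $z_0$ and the smallest such modulus.

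The main obstacle is the rigorous proof that the coefficients of $zF(z)$ are nonnegative. The explicit formula has alternating signs, so this has to come from the renewal interpretation. If the regeneration decomposition proves delicate, a fallback is to use directly that $1/(1-\phi)$ has nonnegative coefficients $p_{n-1}^a$ together with a Kaluza-type argument: the sequence $p_n^{a,y}$ is log-convex, $p_{n+1}p_{n-1}\ge p_n^2$, which by Kaluza's theorem gives $f_n\ge0$. We would attempt the renewal argument first.
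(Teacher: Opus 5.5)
Your argument rests on the claim that $\phi(z)=zF(z)$ has nonnegative coefficients, and that claim is false. The coefficients of the polynomial $zF(z)$ are fixed by the definition of $F$: the coefficient of $z^{i+1}$ is $\frac{(-1)^i(\theta^i+a)^{i+1}}{(i+1)!\,\theta^{i(i+1)/2}(a+1)^{i+1}}$. In particular the coefficient of $z^2$ is $-\frac{(\theta+a)^2}{2\theta(a+1)^2}<0$ whenever $a>-\theta$, i.e.\ in every non-degenerate case of the orange region. For instance, $\theta=\frac12$, $a=-\frac14$ gives $u(z)=1-z+\frac{z^2}{9}$. No probabilistic reinterpretation can change this: $\hat P_{a,y,\theta}=1/(1-\phi)$ for $y\in A_0$ forces $\phi=1-1/\hat P_{a,y,\theta}=zF(z)$.

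The renewal picture you sketch is also incorrect. Paths need not regenerate within $p+1$ steps: if all $X_i$ lie in $(\max(\theta,-a/\theta),1]$, they satisfy $X_{i+1}\ge\theta X_i$ and never drop below $-a/\theta$. So a genuine first-regeneration decomposition reads $\hat P_{a,y,\theta}=R(z)/(1-\phi(z))$, with a non-trivial series $R$ counting the never-regenerating paths, and that $\phi$ is not $zF$. The Kaluza fallback fails as well. For $y\in A_0$ one has $p_0^{a,y}=p_1^{a,y}=1$ and $p_2^{a,y}=p_1^a(\theta)<1$, so log-convexity is violated at the first step.

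So the positivity has to come from somewhere other than the coefficients of $u$. Pringsheim's theorem applied to $1/u=\hat P_{a,y,\theta}$, $y\in A_0$ (which does have nonnegative coefficients), would give you the part $|z_1|<z_0$. It cannot rule out further zeros on the circle $|z|=z_0$, and that is exactly where an irreducibility/aperiodicity input is needed.

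The paper supplies this input at the level of eigenfunctions:
\begin{itemize}
\item The residues $\phi_1(y)=\lim_{z\to z_1}(z_1-z)^r\hat P_{a,y,\theta}(z)$ (with $r$ the multiplicity of $z_1$) and $\phi_0(y)=\lim_{z\to z_0}(z_0-z)\hat P_{a,y,\theta}(z)$ both satisfy $\phi_j(y)=\frac{z_j}{a+1}\int_{\{x\ge\theta y\}\cap[-a,1]}\phi_j(x)\,\d x$.
\item One has $\phi_0>0$, because $G_{a,y,\theta}(z_0)>0$ and $u'(z_0)<0$.
\item The $n$-step kernel $K_n(y,x)$ is strictly positive for $n\ge p+1$.
\end{itemize}
Comparing $|\phi_1|$ with $M\phi_0$, where $M=\max_y|\phi_1(y)|/\phi_0(y)$, in the $n$-fold iterated equation forces $|z_1|=z_0$. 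It then forces $\phi_1=e^{i\psi}M\phi_0$, which gives $z_1=z_0$.
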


\begin{proof}
Let $z_1\in\C$ with $|z_1|\leq z_0$ and $u(z_1)=0$. Assume $z_1\neq z_0$.

We start with some initial definitions. Recall that $u$ is a polynomial of degree $\leq p+1$ which has $z_0$ and $z_1$ as zeros. Denoting the multiplicity of the zero $z_1$ by $r$, we note that $r\in\{1,\ldots,p\}$. Further, we define
\begin{align} \label{eqn:defnphi}
\phi_1(y) & :=\lim_{z\to z_1} (z_1-z)^r \hat P_{a,y,\theta}(z) = \frac{G_{a,y,\theta}(z_1)}{\lim_{z\to z_1} \frac{u(z)}{(z_1-z)^r} },\qquad y\in[-a,1].
\end{align}
We note that $G_{a,y,\theta}(z_1)$ is some finite number (which may be zero), because $G_{a,y,\theta}(z)$ is a polynomial in $z$, while the limit in the denominator exists and is non-zero because $r$ is the exact multiplicity of $z_1$ as a zero of $u$.
Note that $\phi_1$ is not identically zero on $[-a,1]$. Indeed, at least for $y\in A_0$ we have
$$
\phi_1(y)=\lim_{z\to z_1} (z_1-z)^r \hat P_{a,y,\theta}(z) = \lim_{z\to z_1} (z_1-z)^r \frac{1}{u(z)},
$$
because $\hat P_{a,y,\theta}(z)=\frac{1}{u(z)}$; and the limit does not vanish, as argued above.

Define
\begin{align*} 
\phi_0(y):=\lim_{z\to z_0} (z_0-z) \hat P_{a,y,\theta}(z) =\frac{G_{a,y,\theta}(z_0)}{\lim_{z\to z_0} \frac{u(z)}{z_0-z} } = \frac{G_{a,y,\theta}(z_0)}{- \lim_{z\to z_0} \frac{u(z)-u(z_0)}{z-z_0} } =\frac{G_{a,y,\theta}(z_0)}{-u'(z_0)},
\end{align*}
and $G_{a,y,\theta}(z_0)$ is some finite number, as argued above.
Note that the last chain of equalities also shows that $\phi_0$ is strictly positive and finite on $[-a,1]$, by  Lemma~\ref{lem:gofz0positive} and Lemma~\ref{orange_F_monoton}.

We will now obtain eigenvalue equations for $\phi_1$ and $\phi_0$.
It is known from (\ref{eqn:orange:integralequationhatp}) that, for $y\in[-a,1]$, 
\begin{equation} \label{eqn:orange:recursion}
\hat{P}_{a,y,\theta}(z) =1+\frac{z}{a+1}\int_{\{x\geq \theta y\}\cap[-a,1]} \hat{P}_{a,x,\theta}(z)\d x.
\end{equation}
In fact, we showed this for $y\in A_m$, $m\geq 1$, only, but for $y\in A_0$ it is easy to verify.
We multiply (\ref{eqn:orange:recursion}) by $(z_1-z)^r$ and take the limit when $z\to z_1$. Then,
\begin{align} \label{eqn:orange:recursionphi}
\phi_1(y) =\frac{z_1}{a+1}\int_{\{x\geq \theta y\}\cap[-a,1]} \phi_1(x) \d x.
\end{align}
Here, we exchanged the limit and the integral. This is justified by the dominated convergence theorem, as visible from (\ref{eqn:defnphi}): the fraction $\frac{(z_1-z)^r}{u(z)}$ can be uniformly bounded when $z\to z_1$, because $z_1$ is a zero of $u$ of exact multiplicity $r$. Moreover, $G_{a,x,\theta}(z)$ is a polynomial in $z$ whose coefficients are uniformly bounded in $x\in[-a,1]$. Hence the integrand is dominated by an integrable constant, and dominated convergence applies. Equation (\ref{eqn:orange:recursionphi}) implies that $\phi_1$ is continuous.

Completely analogously (but multiplying this time (\ref{eqn:orange:recursion}) by $(z_0-z)$), we obtain
\begin{align} \label{eqn:orange:recursionh}
\phi_0(y) =\frac{z_0}{a+1}\int_{\{x\geq \theta y\}\cap[-a,1]} \phi_0(x) \d x.
\end{align}
The last equation shows that $\phi_0$ is in particular continuous. 
We can now iterate  (\ref{eqn:orange:recursionphi}) to get
\begin{align} 
\phi_1(y) & = \frac{z_1^n}{(a+1)^n} \int_{-a}^1 \int_{-a}^1 \cdots \int_{-a}^1 \1_{x_2\geq \theta y, x_3\geq \theta x_2, \ldots , x_{n+1} \geq \theta x_n } \d x_2 \ldots \d x_n \phi_1(x_{n+1}) \d x_{n+1}
\notag 
\\
& = z_1^n \int_{-a}^1 \P( X_2 \geq \theta y, X_3\geq \theta X_2, \ldots, X_n\geq \theta X_{n-1}, x_{n+1} \geq \theta X_n) \phi_1(x_{n+1})   \,\frac{\d x_{n+1}}{a+1}
\notag
\\
& =: z_1^n \int_{-a}^1 K_n(y,x_{n+1}) \phi_1(x_{n+1})  \,\frac{\d x_{n+1}}{a+1}.\label{eqn:orange:iteratedphi}
\end{align}
    Clearly,  $K_n(y,x_{n+1})>0$ for any $y,x_{n+1}\in[-a,1]$ if $n$ is large enough (in fact, $n\geq p+1$ already works).
By (\ref{eqn:orange:recursionh}), the analogous fact is true for $\phi_0$:
\begin{align} \label{eqn:orange:iteratedh}
\phi_0(y) & = z_0^n \int_{-a}^1 K_n(y,x_{n+1}) \phi_0(x_{n+1})  \,\frac{\d x_{n+1}}{a+1}.
\end{align}

We finally show some decisive estimates between $\phi_1$ and $\phi_0$. Since $\phi_0(y)>0$ for all $y\in[-a,1]$, we can define
$$
M:=\max_{y\in[-a,1]} \frac{|\phi_1(y)|}{\phi_0(y)}.
$$
Since $\phi_1$ and $\phi_0$ are continuous, the definition of $M$ allows to choose a $y_\ast\in[-a,1]$ such that
\begin{align*}
M = \frac{|\phi_1(y_\ast)|}{\phi_0(y_\ast)}.
\end{align*}
Using (\ref{eqn:orange:iteratedphi}) and (\ref{eqn:orange:iteratedh}) and the triangle inequality, for $n\geq p+1$, we obtain
\begin{align}
M \phi_0(y_\ast) & = |\phi_1(y_\ast)| \notag
\\
& = \left|z_1 ^n \int_{-a}^1 K_n(y_\ast,x_{n+1}) \phi_1(x_{n+1}) \,\frac{\d x_{n+1}}{a+1} \right| \notag 
\\
&  \leq |z_1|^n \int_{-a}^1 K_n(y_\ast,x_{n+1}) |\phi_1(x_{n+1})| \, \frac{\d x_{n+1}}{a+1} \label{eqn:orange:triangleintegral}
\\
& \leq z_0^n  \int_{-a}^1 K_n(y_\ast,x_{n+1}) M \phi_0(x_{n+1}) \, \frac{\d x_{n+1}}{a+1} \notag
\\
& = M \phi_0(y_\ast). \notag
\end{align}
Since $M$ and $\phi_0(y_\ast)$ are positive, the inequalities must in fact be equalities, so that in particular $|z_1|=z_0$. Further, since $K_n(y_\ast,x_{n+1})>0$ and the integrands $\phi_1(x_{n+1})$ and $ M \phi_0(x_{n+1})$ are continuous, the definition of $M$ and fact that
$$
\int_{-a}^1 K_n(y_\ast,x_{n+1}) (M \phi_0(x_{n+1})- |\phi_1(x_{n+1})|) \d x_{n+1} = 0
$$
imply that we must have $|\phi_1(x)|=M \phi_0(x)$ for all $x\in[-a,1]$. Additionally, the triangle inequality step (\ref{eqn:orange:triangleintegral}) can only be true if the complex argument of $\phi_1(x)$ is identical for Lebesgue almost all $x$; and by continuity of $\phi_1$, we must have $\phi_1(x)=e^{i\psi} M \phi_0(x)$ for some $\psi\in[0,2\pi)$.
Using this complex representation of $\phi_1(y)$ and (\ref{eqn:orange:recursionphi}) in the first and   (\ref{eqn:orange:recursionh}) in the last step, we obtain
\begin{align*}
e^{i\psi} M \phi_0(y) & = \frac{z_1}{a+1} \int_{\{x\geq \theta y\}\cap[-a,1]}  e^{i\psi} M \phi_0(x) \d x 
\\
& =  \frac{z_1}{z_0} \, e^{i\psi} M\, \frac{z_0}{a+1} \int_{\{x\geq \theta y\}\cap[-a,1]}  \phi_0(x) \d x =  \frac{z_1}{z_0} e^{i\psi} M\, \phi_0(y),
\end{align*}
which implies $z_1=z_0$. The second part of the assertion follows simply by the fact that $u$ is a polynomial, so that the different zeros are necessarily separated.
\end{proof}

As a next step, we prove the asymptotic behavior of $(p_n^{a,y}(\theta))$ for $y\in A_m$, $m\ge 1$.

\begin{Lemma}\label{orange_asymptotic_first}
    In the orange region, for $0\leq m\leq p$ and $y\in A_m$, it holds
    \begin{align*}
        p_n^{a,y}(\theta)\sim \lambda^{n+1}\sum_{i=0}^my^ic_i(\theta,\lambda,a, m)
    \end{align*}
    with \begin{align*}
        c_i(\theta,\lambda,a,m)\coloneqq\frac{-1}{u'\big(\frac{1}{\lambda}\big)}\left(\frac{-1}{\lambda(a+1)}\right)^i\frac{\theta^\frac{i(i+1)}{2}}{i!}\sum_{{\ell}=0}^{m-i}\sum_{\substack{i_1+\dots +i_{\ell}= m-i-{\ell}\\i_r\ge 0}}\prod_{r=1}^{\ell}s(m\!-\!i\!-\!(r\!-\!1)\!-\!\sum_{j=1}^{r-1}i_j,i_r),
    \end{align*}
    where $s(k,i)=s_{\frac{1}{\lambda}}(k,i)$ is given in \eqref{def_orange_s}.
\end{Lemma}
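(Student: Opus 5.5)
The plan is to apply Lemma~\ref{asymptotic_powerseries} to the representation $\hat P_{a,y,\theta}(z)=\frac{G_{a,y,\theta}(z)}{u(z)}$ from Corollary~\ref{orange_GF_in_G_und_F}, with $z_0=\frac1\lambda$ the smallest positive zero of $u$. First we confirm that this identity, proved initially on $|z|<1$, extends to the whole disk $|z|<z_0$. The left-hand side has nonnegative coefficients. The right-hand side is meromorphic, and by Lemma~\ref{lem:analyticityofremovedu} its only possible singularity in $|z|<z_0+\varepsilon$ is at $z_0$. By Lemma~\ref{lem:gofz0positive} and Lemma~\ref{orange_F_monoton} (namely $G_{a,y,\theta}(z_0)>0$ and $u'(z_0)<0$), $z_0$ is a genuine simple pole. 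Hence the radius of convergence of $\hat P_{a,y,\theta}$ is exactly $z_0$, which matches the claim of Corollary~\ref{orange_GF_rekursive_coeff}, and the identity holds on $|z|<z_0$ by analytic continuation.

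Next we set $v(z)\coloneqq \frac{u(z)}{z_0-z}$. This is a polynomial, since $u$ is a polynomial with a simple zero at $z_0$. By Lemma~\ref{lem:analyticityofremovedu}, $v$ has no zeros in $|z|<z_1\coloneqq z_0+\varepsilon$. We then define $f(z)\coloneqq G_{a,y,\theta}(z)/v(z)$. It is analytic on $|z|<z_1$, and we have
\[
f(z_0)=\frac{G_{a,y,\theta}(z_0)}{-u'(z_0)}>0 .
\]
Lemma~\ref{asymptotic_powerseries} then gives
\[
p_n^{a,y}(\theta)\sim \frac{G_{a,y,\theta}(1/\lambda)}{-u'(1/\lambda)}\,\lambda^{n+1}.
\]

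The last step is to identify the constant with $\sum_{i=0}^m y^i c_i(\theta,\lambda,a,m)$. We substitute $z=1/\lambda$ into the formula for $G_{a,y,\theta}$ in Corollary~\ref{orange_GF_in_G_und_F}. The factor $\left(\frac{-zy}{a+1}\right)^i$ becomes $y^i\left(\frac{-1}{\lambda(a+1)}\right)^i$. The quantity $s_{1/\lambda}(k,i)$ is exactly $s(k,i)$ from \eqref{def_orange_s}, because $\frac{(a+1)(i+1)}{za}$ at $z=1/\lambda$ equals $\frac{\lambda(a+1)(i+1)}{a}$. Dividing by $-u'(1/\lambda)$ then gives precisely $c_i(\theta,\lambda,a,m)$.

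The main obstacle I expect is the rigorous justification that the identity $\hat P=G/u$ holds on the full disk $|z|<z_0$ and that no zero of $u$ other than $z_0$ lies on $|z|=z_0$. The earlier lemmas essentially settle this, but the order matters. Lemma~\ref{lem:gofz0positive} uses the identity only for real $0\le z<z_0$, where $u>0$, so we must check that the Taylor coefficients of $G/u$ at $0$ coincide with $p_n^{a,y}$; this is immediate from agreement on $|z|<1$. Once that is settled, Pringsheim's theorem (nonnegative coefficients) combined with the pole at $z_0$ fixes the radius of convergence, and the remainder is bookkeeping.
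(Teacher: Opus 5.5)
Your proposal is correct and follows the paper's proof. You apply Lemma~\ref{asymptotic_powerseries} to $f(z)=G_{a,y,\theta}(z)(z_0-z)/u(z)$, use Lemmas~\ref{orange_F_monoton}, \ref{lem:analyticityofremovedu} and~\ref{lem:gofz0positive} to get analyticity on a larger disk and $f(z_0)=G_{a,y,\theta}(z_0)/(-u'(z_0))\neq 0$, and read off the constant by substituting $z=1/\lambda$. Your extra care in extending the identity $\hat P_{a,y,\theta}=G_{a,y,\theta}/u$ from $|z|<1$ to $|z|<z_0$ (via the pole at $z_0$ and Pringsheim) is a welcome detail that the paper leaves implicit.
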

\begin{proof}
    Our goal is to use Lemma~\ref{asymptotic_powerseries}. By Corollary~\ref{orange_GF_in_G_und_F}, we have with $z_0=\frac{1}{\lambda}$
    \begin{align*}
        \hat{P}_{a,y,\theta}(z)&= \frac{G_{a,y,\theta}(z)}{1-zF(z)}=\frac{G_{a,y,\theta}(z)\frac{z_0-z}{1-zF(z)}}{z_0-z},
    \end{align*}
    and we have to show that for some $z_1>z_0$, the function $f(z)=G_{a,y,\theta}(z)\frac{z_0-z}{1-zF(z)}=\frac{G_{a,y,\theta}(z)(z_0-z)}{u(z)}$ is analytic on $|z|<z_1$ and $f(z_0)\neq 0$. 
    
The fact that $G_{a,y,\theta}(z)$ is entire follows directly from its definition. The fact that $z\mapsto \frac{z_0-z}{1-zF(z)}$ is analytic on $|z|<z_1$ for some $z_1>z_0$ is a conclusion from Lemma~\ref{orange_F_monoton} and Lemma~\ref{lem:analyticityofremovedu}.  Further, $G_{a,y,\theta}(z_0)\neq 0$ follows from Lemma~\ref{lem:gofz0positive}. 
Now, Lemma~\ref{asymptotic_powerseries} gives
    \begin{align*}
        p_n^{a,y}(\theta)&\sim f(z_0)\lambda^{n+1}\\&=\lambda^{n+1}\frac{-1}{u'(z_0)}\sum_{i=0}^m\left(\frac{-z_0y}{a+1}\right)^i\frac{\theta^\frac{i(i+1)}{2}}{i!}\sum_{{\ell}=0}^{m-i}\sum_{\substack{i_1+\dots +i_{\ell}= m-i-{\ell}\\i_r\ge 0}}\prod_{r=1}^{\ell}s(m-i-(r-1)-\sum_{j=1}^{r-1}i_j,i_r)
        \\&=\lambda^{n+1}\sum_{i=0}^my^ic_i(\theta,\lambda,a,m)
    \end{align*}
    with coefficients given in (\ref{eqn:orange:defnciwithlambda}).
\end{proof}
Now we can infer the claim of Theorem~\ref{maintheorem} for the orange region.
\begin{proof}[Proof of Theorem~\ref{maintheorem} (O)]
Lemma~\ref{orange_asymptotic_first} gives
   $p_n^{a,y}(\theta)\sim \lambda^{n+1}\sum_{i=0}^my^ic_i(\theta,\lambda,a, m)=\lambda^{n+1} h(y)$ for $0\le m\le p$ and $y\in A_m$ and $h$ as defined in the claim of Theorem~\ref{maintheorem} (O).   
Like in the other regions, we use Lemma~\ref{Prob=Lim_int_trans_kernel} to conclude to the Doob limit. The theorem also states the value of the eigenfunction for the case $\theta x\in A_p=[\frac{-a}{\theta^p},1]$. That part does not follow from the asymptotics and is not required for the application of Lemma~\ref{Prob=Lim_int_trans_kernel}. The value of $h$ for this case can be verified directly from the eigenvalue equation.
\end{proof}

\end{document}